\newtheorem{theorem}{Theorem}[section]
\newtheorem{corollary}[theorem]{Corollary}
\newtheorem{lemma}[theorem]{Lemma}
\theoremstyle{definition}
\theoremstyle{remark}
\newtheorem{remark}[theorem]{Remark}
\numberwithin{equation}{section}
\DeclareMathOperator{\RE}{Re}
\DeclareMathOperator{\IM}{Im}
\begin{document}

\title{On a Generalized Briot-Bouquet type Differential Subordination}
\author[S. Sivaprasad Kumar]{S. Sivaprasad Kumar}

\address{Department of Applied Mathematics, Delhi Technological University,
Delhi--110 042, India}
\email{spkumar@dce.ac.in}
\author[P. Goel]{Priyanka Goel$^*$}

\address{Department of Applied Mathematics, Delhi Technological University,
Delhi--110 042, India}
\email{priyanka.goel0707@gmail.com}
\begin{abstract}
We introduce and study the following special type of differential subordination implication:
\begin{equation}\label{abs}
	p(z)Q(z)+\frac{zp'(z)}{\beta p(z)+\alpha}\prec h(z)\quad\Rightarrow p(z)\prec h(z),
\end{equation}
which generalizes the Briot-Bouquet differential subordination, where $Q(z)$ is analytic and $0\neq\beta,\alpha\in\mathbb{C}.$ Further, some special cases of our result are also discussed. Finally, analogues of open door lemma and integral existence theorem with applications to univalent functions are obtained.
\end{abstract}

\keywords{Starlike functions, Briot Bouquet, Subordination.}

\subjclass[2010]{30C45, 30C55, 30C80}

\maketitle
\let\thefootnote\relax\footnotetext{*Corresponding author\\The work presented here was supported by the Council of Scientific and Industrial Research(CSIR). Ref.No.:08/133(0018)/2017-EMR-I.}

\section{Introduction}
Let $\mathcal{H}$ be the class of analytic functions defined on the open unit disc $\mathbb{D}:=\{z\in\mathbb{C}:|z|<1\}.$ For any positive integer $n$ and complex number $a,$  $\mathcal{H}[a,n]$ denotes the subclass of $\mathcal{H}$ consisting of functions of the form $f(z)=a+a_nz^n+a_{n+1}z^{n+1}+\cdots.$  Let $\mathcal{A}_n$ be the class of functions of the form $f(z)=z+a_{n+1}z^{n+1}+\cdots$ and denote $\mathcal{A}:=\mathcal{A}_1.$ The subclass of $\mathcal{A}$ consisting of univalent functions is denoted by $\mathcal{S}$. A function $f\in\mathcal{A}$ is said to be typically real in $\mathbb{D}$ if for every non-real $z$ in $\mathbb{D},$ we have $sign(\IM f(z))=sign(\IM z).$ The class of all such functions is denoted by $\mathcal{TR}$. Let $\mathcal{P}$ be the class of analytic functions of the form $p(z)=1+c_1z+c_2z^2+\cdots$ such that $\RE p(z)>0$ for all $z$ in $\mathbb{D},$ $\mathcal{P}$ is known as the Carath\'eodory class. Given two analytic functions $f$ and $F,$ we say that $f$ is subordinate to $F,$ denoted by $f\prec F$ if there exists an analytic function $\omega(z),$ with $\omega(0)=0$ and $|\omega(z)|<1,$ such that $f(z)=F(\omega(z)).$ In particular, if $F$ is univalent then $f\prec F$ if and only if $f(0)=F(0)$ and $f(\mathbb{D})\subset F(\mathbb{D}).$ Assume $\mathcal{R}$ to be the class of functions with bounded turning consisting of all the functions $f\in\mathcal{A}$ such that $\RE f'(z)>0\;(z\in\mathbb{D}),$ clearly $\mathcal{R}\subset\mathcal{S}.$ A function $f$ in $\mathcal{S}$ is said to be starlike if and only if $\RE(zf'(z)/f(z))>0$ in $\mathbb{D}$ and the class of starlike functions is denoted by $\mathcal{S}^*.$ Similarly, the class of convex functions, denoted by $\mathcal{C}$ consists of all those functions $f$ in $\mathcal{S}$ for which $\RE(1+zf''(z)/f'(z))>0$ in $\mathbb{D}.$ Let $\mathcal{S}^*(\alpha)$ $(0\leq\alpha<1)$ be the subclass of $\mathcal{S}^*$ consisting of the functions $f$ satisfying $\RE(zf'(z)/f(z))>\alpha.$ We say that a function $f$ is strongly starlike of order $\gamma\;(0<\gamma\leq 1),$ whenever $|\arg{(zf'(z)/f(z))}|<\gamma\pi/2$ and the class of strongly starlike functions is denoted by $\mathcal{SS}^*(\gamma),$ note that $\mathcal{SS}^*(1)=\mathcal{S}^*(0)=\mathcal{S}^*.$ Coman~\cite{bbref6} defined that a function $f\in\mathcal{A}$ is said to be almost strongly starlike of order $\alpha,\alpha\in(0,1],$ with respect to the function $g\in\mathcal{S}^*(1-\alpha)$ if
\begin{equation*}
	\dfrac{g(z)f'(z)}{g'(z)f(z)}\prec\left(\dfrac{1+z}{1-z}\right)^{\alpha}\;\text{or equivalently,}\;\left|\arg{\dfrac{g(z)f'(z)}{g'(z)f(z)}}\right|<\alpha\dfrac{\pi}{2}
\end{equation*}
and concluded that such functions are starlike and hence univalent. Recently, Antonino and Miller~\cite{antoF} defined the class of $F$-starlike functions, denoted by $\mathcal{FS}^*$ as
\begin{equation*}
	\mathcal{FS}^*=\left\{f\in\mathcal{A}:\RE\left(\dfrac{F(z)f'(z)}{F'(z)f(z)}\right)>0\right\},
\end{equation*}
where $F$ is fixed univalent function on the closed unit
disk $\mathbb{D}$, with at most a single pole on $\partial\mathbb{D}$ and $F(0) = 0$. The authors in~\cite{maminda} came up with a generalized subclass of $\mathcal{S}^*$ as well as $\mathcal{C},$ defined using subordination. They considered an analytic function $\varphi,$ having positive real part, which is starlike with respect to $\varphi(0)=1$, symmetric about the real axis and satisfies $\varphi'(0)>0$ on $\mathbb{D}.$  Various subclasses of $\mathcal{S}^*$ are evolved for different choices of $\varphi$ in due course of time ( see~\cite{cho,first,mendiratta,crescent,sokolradius}). Note that $\varphi$ is a typically real function as $\varphi'(0)>0.$ Miller and Mocanu\cite{ds} introduced the theory of differential subordination as an analogue to differential inequalities, using the concept of subordination. The gist of this whole theory is the following implication:
\begin{equation}\label{aa}
	\psi(p(z),zp'(z),z^2p''(z))\prec h(z)\Rightarrow p(z)\prec q(z),\qquad z\in\mathbb{D},
\end{equation}
where $\psi(p(z),zp'(z),z^2p''(z))$ is analytic in $\mathbb{D}.$ Given a complex function $\psi:\mathbb{C}^3\times\mathbb{D}\rightarrow\mathbb{C}$ and a function $h$, univalent in $\mathbb{D},$ if $p$ is analytic in $\mathbb{D}$ and satisfies the differential subordination
\begin{equation}\label{a}
	\psi(p(z),zp'(z),z^2p''(z))\prec h(z),
\end{equation}
then $p$ is called a solution of~\eqref{a}. The univalent function $q$ is called a dominant of the solutions of the differential subordination, if $p\prec q$ for all $p$ satisfying~\eqref{a}. A dominant $\tilde{q}$ that satisfies $\tilde{q}\prec q$ for all dominants $q$ is said to be best dominant of~\eqref{a}. The implication~\eqref{aa} gives rise to three types of differential subordination problems stated in~\cite[Chapter.2]{ds} and  a good deal of literature associated with them is available (see~\cite{ssp2,vkt,first,pri3,ssp}). Using this theory, a special type of first order differential subordination, known as Briot-Bouquet differential subordination defined by
\begin{equation}\label{bb}
	p(z)+\dfrac{zp'(z)}{\beta p(z)+\alpha}\prec h(z),
\end{equation}
was studied by Miller and Mocanu~\cite{ds}. Many implication results were proved later associating~\eqref{bb}. Ruscheweyh and Singh~\cite{ds356} considered Briot-Bouquet differential subordination in a more particular form given by
\begin{equation*}
	p(z)+\dfrac{zp'(z)}{\beta p(z)+\alpha}\prec \dfrac{1+z}{1-z}
\end{equation*}
with $\alpha\geq 0$ and $\beta>0.$ Later it was generalized to the form given by~\eqref{bb}, in which $h(z)$ is taken to be a univalent function and $\alpha,\beta\neq0$ are extended to complex numbers. This particular differential subordination has vast number of applications in the univalent function theory, see~\cite{bbref3,bbref6,bbref4,bbref5,bbref2,bbref1} and the references therein. It is known that the Briot-Bouquet differential subordination is obtained from the Bernardi integral operator. Similarly, the general form of the Bernardi integral operator given by
\begin{equation}
		F(z)=I[f,g]=\left(\dfrac{\alpha+\beta}{g^{\alpha}(z)}\int_0^zg'(t)g^{\alpha-1}(t)f^{\beta}(t)dt\right)^{1/\beta}
\end{equation}
with appropriate choice of $p$ and $h$ yields a different type of differential subordination(for example see Corollary~\ref{corF}), which we introduce here:
\begin{equation}\label{1201}
	p(z)Q(z)+\frac{zp'(z)}{\beta p(z)+\alpha}\prec h(z)\quad (z\in\mathbb{D}),
\end{equation}
 where $\alpha,\;\beta\in\mathbb{C}$ with $\beta\neq 0$ and $Q$ is an analytic function such that
 	\begin{equation}
		g(z)=z\exp{\int_0^z \dfrac{Q(t)-1}{t}dt}.
	\end{equation}
The expression~\eqref{1201} is clearly a generalization of the Briot-Bouquet differential subordination as it is evident when we choose $Q(z)=1$. In the present investigation, we find conditions on $\alpha,\;\beta$ and $Q(z)$ so that the implication~\eqref{abs} holds. Further, we establish certain subordination results analogous to open door lemma and integral existence theorem. Apart from deriving other similar results, we also find sufficient conditions for starlikeness and univalence as an application of our results. For a better understanding of our main results, one may refer to~\cite[Ch.2]{ds} for the prerequisites.
\section{Generalized Briot-Bouquet Differential Subordination}
We present here all implication results pertaining to the proposed generalized Briot-Bouquet differential subordination. We begin with the following result:
\begin{theorem}\label{bbgen}
	Let $h$ be convex in $\mathbb{D}$ and $\alpha,\beta\in\mathbb{C}$ with $\beta\neq 0$. If $Q\in\mathcal{H}[1,n]$ be such that the following conditions hold:
	\begin{enumerate}
		\item[$(i)$] $\RE\left(\dfrac{1}{\beta h(z)+\alpha}\right)>0\quad (z\in\mathbb{D}).$
		\item[$(ii)$] $\RE\left(\dfrac{1}{\beta h(\zeta)+\alpha}+(Q(z)-1)\dfrac{h(\zeta)}{\zeta h'(\zeta)}\right)>0\quad (z\in\mathbb{D},\;\;\zeta\in h^{-1}(p(D))),$
	\end{enumerate}
	where $D=\{z\in\mathbb{D}:p(z)=h(\zeta)\;\text{for some}\; \zeta\in\partial\mathbb{D}\}.$ If $p$ is analytic in $\mathbb{D}$ with $p(0)=h(0)$ and
	\begin{equation}\label{0v}
		p(z)Q(z)+\dfrac{zp'(z)}{\beta p(z)+\alpha}\prec h(z),
	\end{equation}
	then $p(z)\prec h(z).$
\end{theorem}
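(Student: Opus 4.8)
The plan is to prove the statement by contradiction, using the ``structure lemma'' of Miller and Mocanu for non-subordinate functions together with the supporting-line property of convex domains; conditions $(i)$ and $(ii)$ are precisely what is needed to push the image of the obligatory boundary-contact point \emph{out} of $h(\mathbb{D})$. Write $\psi(r,s;z):=rQ(z)+\dfrac{s}{\beta r+\alpha}$, so that \eqref{0v} reads $\psi\big(p(z),zp'(z);z\big)\prec h(z)$; since $h$ is convex it is univalent and $h(\mathbb{D})$ is a convex domain, and the hypothesis forces $\psi\big(p(z),zp'(z);z\big)\in h(\mathbb{D})$ for every $z\in\mathbb{D}$. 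Assume, if possible, that $p\not\prec h$. Since $p$ may be taken non-constant (a constant $p$ equals $h(0)$ and is trivially subordinate to $h$) and $p(0)=h(0)$ with $h$ univalent, the Miller--Mocanu lemma on non-subordinate functions (see, e.g., \cite[Lemma~2.2d]{ds}) yields $z_0\in\mathbb{D}$, $\zeta_0\in\partial\mathbb{D}$ and $m\ge 1$ with $p(z_0)=h(\zeta_0)$, $z_0p'(z_0)=m\zeta_0 h'(\zeta_0)$, and $p(\{|z|<|z_0|\})\subseteq h(\mathbb{D})$. By the definition of $D$ we then have $z_0\in D$, hence $h(\zeta_0)=p(z_0)\in p(D)$ and $\zeta_0\in h^{-1}(p(D))$, so $(i)$ may be used at $z=z_0$ and $(ii)$ at $z=z_0,\ \zeta=\zeta_0$.

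The central computation is then short. Put $w_0:=h(\zeta_0)=p(z_0)$; substituting into $\psi$ gives $\psi\big(p(z_0),z_0p'(z_0);z_0\big)=w_0Q(z_0)+\dfrac{m\zeta_0 h'(\zeta_0)}{\beta w_0+\alpha}$, and therefore
\[
\frac{\psi\big(p(z_0),z_0p'(z_0);z_0\big)-w_0}{\zeta_0 h'(\zeta_0)}
=\big(Q(z_0)-1\big)\frac{h(\zeta_0)}{\zeta_0 h'(\zeta_0)}+\frac{m}{\beta h(\zeta_0)+\alpha}.
\]
Taking real parts and writing $m=1+(m-1)$, the right-hand side equals
\[
\RE\!\left(\frac{1}{\beta h(\zeta_0)+\alpha}+\big(Q(z_0)-1\big)\frac{h(\zeta_0)}{\zeta_0 h'(\zeta_0)}\right)
+(m-1)\,\RE\!\left(\frac{1}{\beta h(\zeta_0)+\alpha}\right)>0,
\]
because the first bracket is positive by $(ii)$ while the second summand is non-negative: $m\ge1$, and $\RE\big(1/(\beta h(\zeta_0)+\alpha)\big)\ge0$ by $(i)$ and continuity of $h$ at $\zeta_0$ (note $\beta h(\zeta_0)+\alpha=\beta p(z_0)+\alpha\neq0$, since the left side of \eqref{0v} is analytic at $z_0\in\mathbb{D}$). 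On the other hand, the supporting-line property of the convex domain $h(\mathbb{D})$ at its boundary point $w_0$ — with $\zeta_0 h'(\zeta_0)$ pointing along the outward normal — gives $\RE\big((w-w_0)/(\zeta_0 h'(\zeta_0))\big)<0$ for every $w\in h(\mathbb{D})$. Hence $\psi\big(p(z_0),z_0p'(z_0);z_0\big)\notin h(\mathbb{D})$, contradicting \eqref{0v}; thus $p\prec h$.

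The algebra above and the invocation of the structure lemma are routine; the points that need care are the boundary considerations. I expect the main obstacle to be making the boundary objects $h(\zeta_0)$, $h'(\zeta_0)$ legitimate and the supporting-line inequality rigorous for a general convex $h$: one uses that a convex conformal map extends continuously to $\overline{\mathbb{D}}$ except possibly at one point (harmless here, since $p(z_0)$ is finite) and that $h'\neq 0$ at the relevant boundary point, so that $\zeta_0 h'(\zeta_0)$ genuinely is a positive multiple of the outward normal to $\partial h(\mathbb{D})$ at $w_0$. A secondary point is to confirm that the contact point really lands in the set where $(ii)$ is assumed, which is exactly why $D$ (and hence $h^{-1}(p(D))$) is defined as it is; this localization is what lets the hypotheses be weaker than the full admissibility condition of \cite[Ch.~2]{ds}.
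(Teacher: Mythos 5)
Your proof is correct and follows essentially the same route as the paper's: contradiction via the Miller--Mocanu lemma \cite[Lemma~2.2d]{ds}, the identical identity for $\bigl(\psi_0-h(\zeta_0)\bigr)/\bigl(\zeta_0 h'(\zeta_0)\bigr)$, the same use of $m\ge 1$ together with $(i)$ and $(ii)$, and the same outward-normal argument on the convex domain $h(\mathbb{D})$. Your extra remarks on boundary regularity of $h$ and on verifying $\zeta_0\in h^{-1}(p(D))$ merely make explicit what the paper leaves implicit.
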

\begin{proof}
	Let us suppose $p$ is not subordinate to $h$. Then by~\cite[Lemma 2.2d, pp.24]{ds} there exists $z_0\in\mathbb{D},$ $\zeta_0\in\partial\mathbb{D}$ and $m\geq1$ such that $p(z_0)=h(\zeta_0)$ and $z_0p'(z_0)=m\zeta_0h'(\zeta_0)$ and therefore we have
	\begin{equation*}
		\psi_0:=\psi(p(z_0),z_0p'(z_0))=\psi(h(\zeta_0),m\zeta_0 h'(\zeta_0))=h(\zeta_0)Q(z_0)+\dfrac{m\zeta_0 h'(\zeta_0)}{\beta h(\zeta_0)+\alpha},
	\end{equation*}
	which yields
	\begin{equation*}
		\RE\dfrac{\psi_0-h(\zeta_0)}{\zeta_0 h'(\zeta_0)}=\RE\left((Q(z_0)-1)\dfrac{h(\zeta_0)}{\zeta_0 h'(\zeta_0)}+\dfrac{m}{\beta h(\zeta_0)+\alpha}\right).
	\end{equation*}
	Using the fact that $m\geq1$ together with (i) and (ii), we have
	\begin{eqnarray*}
		\RE\dfrac{\psi_0-h(\zeta_0)}{\zeta_0 h'(\zeta_0)}&\geq& \RE\left((Q(z_0)-1)\dfrac{h(\zeta_0)}{\zeta_0 h'(\zeta_0)}+\dfrac{1}{\beta h(\zeta_0)+\alpha}\right)>0,
	\end{eqnarray*}
	which implies
	\begin{equation*}
		\left|\arg{\dfrac{\psi_0-h(\zeta_0)}{\zeta_0 h'(\zeta_0)}}\right|<\dfrac{\pi}{2}.
	\end{equation*}
	Since $h(\mathbb{D})$ is convex, $h(\zeta_0)\in h(\partial\mathbb{D})$ and $\zeta_0 h'(\zeta_0)$ is the outward normal to $h(\partial{\mathbb{D}})$ at $h(\zeta_0),$ we conclude that $\psi_0\notin h(\mathbb{D}),$ which contradicts~\eqref{0v} and hence $p(z)\prec h(z).$
\end{proof}
\begin{remark}
	If we take $Q(z)=1$ in Theorem~\ref{bbgen}, it reduces to~\cite[Theorem~3.2a]{ds}.
\end{remark}
\begin{corollary}\label{kcor}
	Let $h$ be convex in $\mathbb{D}$ and $\alpha,\beta\in\mathbb{C}$ with $\beta\neq 0$. If $Q\in\mathcal{H}[1,n]$ and $p$ is analytic in $\mathbb{D}$ with $p(0)=h(0)=(k-1)/4,$ where $k\geq1,$ be such that
	\begin{equation}\label{09}
		\RE\left(\dfrac{1}{\beta h(\zeta)+\alpha}\right)>k|Q(z)-1|-\RE(Q(z)-1)\quad (z\in\mathbb{D},\;\zeta\in\partial\mathbb{D}),
	\end{equation}
	then
	\begin{equation*}
		p(z)Q(z)+\dfrac{zp'(z)}{\beta p(z)+\alpha}\prec h(z)\quad\Rightarrow\quad p(z)\prec h(z).
	\end{equation*}
\end{corollary}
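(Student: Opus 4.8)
The plan is to deduce the corollary from Theorem~\ref{bbgen} by checking that the single hypothesis~\eqref{09} already forces conditions (i) and (ii) of that theorem; the implication then follows at once.

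For condition (i): since $k\geq 1$ we have $k|Q(z)-1|-\RE(Q(z)-1)\geq |Q(z)-1|-\RE(Q(z)-1)\geq 0$ for every $z\in\mathbb{D}$, so~\eqref{09} gives $\RE\bigl(1/(\beta h(\zeta)+\alpha)\bigr)>0$, equivalently $\RE(\beta h(\zeta)+\alpha)>0$, for all $\zeta\in\partial\mathbb{D}$. Thus $h(\partial\mathbb{D})$ lies in the open half-plane $\{w:\RE(\beta w+\alpha)>0\}$, and since $h$ is convex the domain $h(\mathbb{D})$ is the convex region bounded by $h(\partial\mathbb{D})$, hence lies in that same half-plane; in particular $\beta h+\alpha$ is non-vanishing on $\mathbb{D}$ and $\RE\bigl(1/(\beta h(z)+\alpha)\bigr)>0$ there, which is condition (i).

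For condition (ii): fix $z\in\mathbb{D}$ and $\zeta\in\partial\mathbb{D}$ (covering the set $h^{-1}(p(D))$ of Theorem~\ref{bbgen}). Writing $h(\zeta)/(\zeta h'(\zeta))=1+\bigl(h(\zeta)/(\zeta h'(\zeta))-1\bigr)$ and using $\RE(\lambda\mu)\geq-|\lambda|\,|\mu|$,
\begin{equation*}
	\RE\left((Q(z)-1)\frac{h(\zeta)}{\zeta h'(\zeta)}\right)\geq\RE(Q(z)-1)-|Q(z)-1|\left|\frac{h(\zeta)}{\zeta h'(\zeta)}-1\right|.
\end{equation*}
Hence, once one has the estimate $\bigl|h(\zeta)/(\zeta h'(\zeta))-1\bigr|\leq k$ for $\zeta\in\partial\mathbb{D}$, the right-hand side is at least $\RE(Q(z)-1)-k|Q(z)-1|$; adding $\RE\bigl(1/(\beta h(\zeta)+\alpha)\bigr)$ and invoking~\eqref{09} yields
\begin{equation*}
	\RE\left(\frac{1}{\beta h(\zeta)+\alpha}+(Q(z)-1)\frac{h(\zeta)}{\zeta h'(\zeta)}\right)>0,
\end{equation*}
which is precisely condition (ii).

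Everything is thus reduced to the bound $\bigl|h(\zeta)/(\zeta h'(\zeta))-1\bigr|\leq k$ for a convex $h$ with $h(0)=(k-1)/4$, and this is the step I expect to be the main obstacle. I would attack it through the classical boundary behaviour of convex maps: with $g(\zeta)=(h(\zeta)-h(0))/h'(0)$ a normalised convex function, the Marx--Strohh\"acker theorem ($g$ is starlike of order $\tfrac12$) gives $\RE\bigl(\zeta g'(\zeta)/g(\zeta)\bigr)\geq\tfrac12$, equivalently $\bigl|(h(\zeta)-h(0))/(\zeta h'(\zeta))-1\bigr|\leq 1$, while the convex distortion bound $|g'(\zeta)|\geq\tfrac14$ on $\partial\mathbb{D}$ controls the residual term $\bigl|h(0)/(\zeta h'(\zeta))\bigr|$; splitting $h(\zeta)/(\zeta h'(\zeta))-1$ into these two pieces and using the exact value $h(0)=(k-1)/4$ should deliver the constant $k$. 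The delicate point is carrying out this last estimate cleanly---in particular handling the contribution of $|h'(0)|$ so that the constant lands exactly at $k$---after which conditions (i) and (ii) are both in place and Theorem~\ref{bbgen} completes the proof.
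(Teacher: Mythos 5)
Your reduction to Theorem~\ref{bbgen} is exactly the route the paper takes: condition (i) follows because the right-hand side of \eqref{09} is nonnegative when $k\geq 1$, and condition (ii) follows from $\RE(XY)\geq\RE Y-K|Y|$ (valid when $|X-1|\leq K$) once one has the bound $\left|h(\zeta)/(\zeta h'(\zeta))-1\right|\leq k$ on $\partial\mathbb{D}$. The one step you leave as an expectation rather than a proof is precisely that bound, so as written the argument is incomplete; but the completion is a two-line triangle inequality using exactly the ingredients you name. Put $\tilde{h}:=h-h(0)$, so $h'=\tilde{h}'$ and $h(\zeta)/(\zeta h'(\zeta))-1=\left(\tilde{h}(\zeta)/(\zeta\tilde{h}'(\zeta))-1\right)+h(0)/(\zeta\tilde{h}'(\zeta))$. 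Marx--Strohh\"acker bounds the first summand by $1$ in modulus, and the convex distortion bound $|\tilde{h}'(\zeta)|\geq 1/(1+|\zeta|)^2=1/4$ on $\partial\mathbb{D}$ bounds the second by $4|h(0)|=k-1$; the total is $k$, as required.

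The ``delicate point'' you flag about $|h'(0)|$ is, however, a genuine one, and it is worth saying that the paper does not actually dispose of it: the distortion estimate $|\tilde{h}'(\zeta)|\geq 1/4$ is a statement about \emph{normalized} convex functions ($\tilde{h}'(0)=1$), while the corollary only assumes $h$ convex with $h(0)=(k-1)/4$. The paper simply writes $\tilde{h}=h-h(0)\in\mathcal{C}$, tacitly assuming $h'(0)=1$. Without that normalization the bound can fail badly: for $h(z)=h(0)+\varepsilon z$ one gets $\left|h(\zeta)/(\zeta h'(\zeta))-1\right|=(k-1)/(4\varepsilon)$, which exceeds $k$ for small $\varepsilon$ when $k>1$. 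So your instinct is correct that the constant lands at exactly $k$ only under the additional hypothesis $h'(0)=1$ (or after renormalizing $h$ and adjusting $k$ accordingly); granting that convention, your argument closes and coincides with the paper's proof.
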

\begin{proof}
	Since $k\geq 1,$ from~\eqref{09} it is clear that for $\zeta\in\partial\mathbb{D},$
	\begin{equation}\label{01}
		\RE\left(\dfrac{1}{\beta h(\zeta)+\alpha}\right)>0.
	\end{equation}
	Since $h$ is convex, the above inequality holds on $\mathbb{D}$ as well. Also, we can say that $\tilde{h}(z):=h(z)-h(0)\in\mathcal{C}.$ Using Marx Strohh\~{a}cker theorem~\cite{ds}, we have $\RE(\zeta \tilde{h}'(\zeta)/\tilde{h}(\zeta))>1/2,$ which is equivalent to
	\begin{equation*}
		\left|\dfrac{\tilde{h}(\zeta)}{\zeta \tilde{h}'(\zeta)}-1\right|\leq 1,
	\end{equation*}
	which implies
	\begin{equation}\label{dis}
		\left|\dfrac{h(\zeta)}{\zeta h'(\zeta)}-1\right|\leq 1+\dfrac{|h(0)|}{|h'(\zeta)|}.
	\end{equation}
	Since $\tilde{h}\in\mathcal{C},$ we have $|\tilde{h}'(z)|\geq 1/(1+r)^2$ on $|z|=r$~\cite[Theorem.9, pp~118]{goodman}. We know that $\zeta\in\partial\mathbb{D},$ so we have $|h'(\zeta)|=|\tilde{h}'(\zeta)|\geq 1/4.$ Thus~\eqref{dis} reduces to
	\begin{equation*}
		\left|\dfrac{h(\zeta)}{\zeta h'(\zeta)}-1\right|\leq k.
	\end{equation*}
	Note that if $X,\;Y\in\mathbb{C}$ and $|X-1|\leq K,$ then
	\begin{equation*}
		\RE(X.Y)=\RE Y+\RE Y(X-1))\geq \RE Y-|Y|K.
	\end{equation*}
	Using this inequality, we can say that
	\begin{eqnarray*}
		\RE\left((Q(z)-1)\dfrac{h(\zeta)}{\zeta h'(\zeta)}+\dfrac{1}{\beta h(\zeta)+\alpha}\right)&\geq & \RE(Q(z)-1)-k|Q(z)-1|\\
		& &+\RE\left(\dfrac{1}{\beta h(\zeta)+\alpha}\right),
	\end{eqnarray*}
	which by using~\eqref{09} implies
	\begin{equation}\label{dis2}
		\RE\left((Q(z)-1)\dfrac{h(\zeta)}{\zeta h'(\zeta)}+\dfrac{1}{\beta h(\zeta)+\alpha}\right)> 0.
	\end{equation}
	From~\eqref{01} and~\eqref{dis2}, we may conclude that the conditions (i) and (ii) of Theorem~\ref{bbgen} are satisfied and as its application, the result follows.
\end{proof}
\begin{corollary}
	Let $Q\in\mathcal{H}[1,1]$ be a function such that $|Q(z)|\leq M\;(z\in\mathbb{D})$ for some $M>0$ and $\alpha,\;\beta\in\mathbb{C}$ with $\beta\neq0.$ Suppose $p$ is analytic and $h$ is convex in $\mathbb{D}$ with $p(0)=h(0)=1$  such that
	\begin{equation}\label{6M}
		\RE\left(\dfrac{1}{\beta h(\zeta)+\alpha}\right)>6(M+1)\quad (\zeta\in\partial\mathbb{D}),
	\end{equation}
	then
	\begin{equation*}
		p(z)Q(z)+\dfrac{zp'(z)}{\beta p(z)+\alpha}\prec h(z)\quad\Rightarrow\quad p(z)\prec h(z).
	\end{equation*}
\end{corollary}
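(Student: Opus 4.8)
The plan is to derive this corollary directly from Corollary~\ref{kcor} by making the right choice of the parameter $k$ and then checking that hypothesis~\eqref{6M} is strong enough to imply hypothesis~\eqref{09}. Since here $h(0)=1$, the normalization $p(0)=h(0)=(k-1)/4$ of Corollary~\ref{kcor} forces $k=5$, and $5\geq 1$, so the choice is admissible; moreover $Q\in\mathcal{H}[1,1]$ is exactly $Q\in\mathcal{H}[1,n]$ with $n=1$, as required there. Thus everything reduces to verifying~\eqref{09} with $k=5$.

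To do this I would estimate the right-hand side of~\eqref{09}. From $|Q(z)|\leq M$ on $\mathbb{D}$ and the triangle inequality one gets $|Q(z)-1|\leq M+1$, and also $-\RE(Q(z)-1)\leq |\RE(Q(z)-1)|\leq |Q(z)-1|\leq M+1$. Consequently, for $k=5$,
\[
	k|Q(z)-1|-\RE(Q(z)-1)\leq 5(M+1)+(M+1)=6(M+1)\qquad(z\in\mathbb{D}).
\]
Combining this with~\eqref{6M} yields, for all $z\in\mathbb{D}$ and $\zeta\in\partial\mathbb{D}$,
\[
	\RE\left(\dfrac{1}{\beta h(\zeta)+\alpha}\right)>6(M+1)\geq k|Q(z)-1|-\RE(Q(z)-1),
\]
which is precisely condition~\eqref{09}. (In particular this also gives $\RE(1/(\beta h(\zeta)+\alpha))>0$, as needed.)

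Having checked~\eqref{09}, Corollary~\ref{kcor} applies verbatim with $k=5$ and delivers the implication $p(z)Q(z)+zp'(z)/(\beta p(z)+\alpha)\prec h(z)\Rightarrow p(z)\prec h(z)$. I do not expect any real obstacle here: the argument is purely a constant-tracking exercise, and the factor $6=k+1=5+1$ in~\eqref{6M} is explained entirely by the normalization $h(0)=1$, which pins down $k=5$. The only point to watch is this bookkeeping with $k$; the estimates themselves are immediate from the boundedness of $Q$.
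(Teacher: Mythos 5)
Your proposal is correct and follows essentially the same route as the paper: identify $k=5$ from $p(0)=h(0)=1$ in Corollary~\ref{kcor}, bound $k|Q(z)-1|-\RE(Q(z)-1)$ by $6|Q(z)-1|\leq 6(M+1)$, and conclude that~\eqref{6M} implies~\eqref{09}. The only cosmetic difference is that you bound the two terms separately before summing, whereas the paper first combines them into $6|Q(z)-1|$; the estimates are identical.
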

\begin{proof}
	We know that $-\RE(Q(z)-1)\leq |Q(z)-1|$ and since $p(0)=1,$ in view of Corollary~\ref{kcor}, we have $k=5.$ Thus
	\begin{equation}\label{0x}
		k|Q(z)-1|-\RE(Q(z)-1)\leq 6|Q(z)-1|\leq 6(|Q(z)|+1)\leq 6(M+1).
	\end{equation}
	Since~\eqref{09} holds due to~\eqref{6M} and~\eqref{0x} and therefore the result follows from Corollary~\ref{kcor}.
\end{proof}
\begin{corollary}\label{corF}
	Let $h$ be convex in $\mathbb{D}$ with $h(0)=1$ and $\alpha,\beta\in\mathbb{C}$ with $\beta\neq 0.$ Let $g\in\mathcal{A}$ be defined as
	\begin{equation}\label{aa1}
		g(z)=z\exp{\int_0^z \dfrac{Q(t)-1}{t}dt},
	\end{equation}
	such that $h$ and $Q$ satisfy the conditions $(i)$ and $(ii)$ of Theorem~\ref{bbgen}. If $f\in\mathcal{A}$ and $F$ is given by
	\begin{equation}\label{24}
		F(z)=I[f,g]=\left(\dfrac{\alpha+\beta}{g^{\alpha}(z)}\int_0^zg'(t)g^{\alpha-1}(t)f^{\beta}(t)dt\right)^{1/\beta},
	\end{equation}
	then
	\begin{equation*}
		\dfrac{zf'(z)}{f(z)}\prec h(z)\quad\Rightarrow\quad\dfrac{zF'(z)/F(z)}{zg'(z)/g(z)}\prec h(z).
	\end{equation*}
\end{corollary}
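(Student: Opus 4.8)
The plan is to reduce the statement to a single application of Theorem~\ref{bbgen}. Put
\[
	p(z):=\dfrac{zF'(z)/F(z)}{zg'(z)/g(z)},
\]
and aim to show that the hypothesis $zf'(z)/f(z)\prec h(z)$ is nothing but the generalized Briot--Bouquet subordination~\eqref{0v} for this particular $p$, with $p(0)=h(0)=1$. Since $h$ is convex, $Q\in\mathcal{H}[1,n]$, and the conditions $(i)$ and $(ii)$ of Theorem~\ref{bbgen} are assumed to hold, that theorem will then immediately yield $p(z)\prec h(z)$, which is the desired conclusion.

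First I would record two elementary identities. Taking the logarithmic derivative in~\eqref{aa1} gives $g'(z)/g(z)=Q(z)/z$, hence $zg'(z)/g(z)=Q(z)$; in particular $p(z)=\bigl(zF'(z)/F(z)\bigr)/Q(z)$, and since $F,g\in\mathcal{A}$ both $zF'/F$ and $Q$ lie in $\mathcal{H}[1,\cdot]$, so $p$ is analytic near $0$ with $p(0)=1=h(0)$. Next, I would rewrite~\eqref{24} as $F^{\beta}(z)g^{\alpha}(z)=(\alpha+\beta)\int_0^z g'(t)g^{\alpha-1}(t)f^{\beta}(t)\,dt$, differentiate both sides, and divide by $F^{\beta}(z)g^{\alpha}(z)/z$ to obtain
\[
	\beta\,\dfrac{zF'(z)}{F(z)}+\alpha\,\dfrac{zg'(z)}{g(z)}=(\alpha+\beta)\,\dfrac{zg'(z)}{g(z)}\left(\dfrac{f(z)}{F(z)}\right)^{\beta}.
\]
Dividing through by $zg'(z)/g(z)=Q(z)$ then gives $\beta p(z)+\alpha=(\alpha+\beta)\bigl(f(z)/F(z)\bigr)^{\beta}$.

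Now set $w(z):=\bigl(f(z)/F(z)\bigr)^{\beta}=(\beta p(z)+\alpha)/(\alpha+\beta)$ and compute $zw'(z)/w(z)$ in two ways: from the first expression, $zw'/w=\beta\bigl(zf'/f-zF'/F\bigr)=\beta\bigl(zf'/f-pQ\bigr)$, using $zF'/F=p\cdot(zg'/g)=pQ$; from the second, $zw'/w=\beta zp'/(\beta p+\alpha)$. Equating the two and dividing by $\beta$ yields
\[
	\dfrac{zf'(z)}{f(z)}=p(z)Q(z)+\dfrac{zp'(z)}{\beta p(z)+\alpha}.
\]
Hence $zf'/f\prec h$ is precisely~\eqref{0v}, and Theorem~\ref{bbgen} applies. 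The main (and essentially only) obstacle is the bookkeeping surrounding well-definedness: one has to fix branches of the $\alpha$-th and $\beta$-th powers consistently so that $g\in\mathcal{A}$, $F\in\mathcal{A}$ and $w=(f/F)^{\beta}$ are single-valued analytic functions with $w(0)=1$, and verify that $\beta p(z)+\alpha$ does not vanish on $\mathbb{D}$, so that $p$ is genuinely analytic there — this follows since $w=(\alpha+\beta)^{-1}(\beta p+\alpha)$ is analytic and nonzero at the origin and, more globally, from the integral representation of $F$. Once these points are settled, the proof is just the formal identity above followed by the invocation of Theorem~\ref{bbgen}.
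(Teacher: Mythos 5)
Your proposal is correct and follows exactly the paper's route: set $p(z)=zF'(z)/(Q(z)F(z))$ with $Q=zg'/g$, differentiate~\eqref{24} to obtain the identity $p(z)Q(z)+zp'(z)/(\beta p(z)+\alpha)=zf'(z)/f(z)$, and invoke Theorem~\ref{bbgen}. You merely supply the intermediate computations (and the well-definedness caveats) that the paper compresses into ``by differentiating~\eqref{24} and appropriately replacing the expressions.''
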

\begin{proof}
	From~\eqref{aa1}, we have $Q(z)=zg'(z)/g(z)$ and let us suppose $p(z)=zF'(z)/(Q(z)F(z)).$ Then by differentiating~\eqref{24} and appropriately replacing the expressions, we have
	\begin{equation*}
		p(z)Q(z)+\dfrac{zp'(z)}{\beta p(z)+\alpha}=\dfrac{zf'(z)}{f(z)}.
	\end{equation*}
	Since $zf'(z)/f(z)\prec h(z),$ the result now follows from Theorem~\ref{bbgen}.
\end{proof}

If we take $h(z)=((1+z)/(1-z))^{\gamma}$ with $\gamma\in(0,1]$ in Corollary~\ref{corF}, we obtain the following result.
\begin{corollary}
	Let $f\in\mathcal{A}$ and $g,Q$ and $F$ are as defined in~\eqref{aa1} and~\eqref{24} respectively such that $\RE(Q(z)-1)>1-\gamma$. Then
	\begin{equation*}
		f\in\mathcal{SS}^*(\gamma)\;\Rightarrow\;F\;\text{is almost strongly starlike of order}\;\gamma\;\text{w.r.t the function}\;g.
	\end{equation*}
\end{corollary}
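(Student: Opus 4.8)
The plan is to specialise Corollary~\ref{corF} to the function $h(z)=\left((1+z)/(1-z)\right)^{\gamma}$ (principal branch), $\gamma\in(0,1]$, and then to match the output against Coman's definition of an almost strongly starlike function. I would begin by recording the standard facts about $h$: it is analytic and univalent on $\mathbb{D}$, it maps $\mathbb{D}$ onto the sector $\{w:|\arg w|<\gamma\pi/2\}$, which is convex precisely because $\gamma\le1$, and $h(0)=1$. Hence $h$ is of the kind permitted in Corollary~\ref{corF}, and I keep, as standing hypotheses inherited from that corollary, that the pair $(h,Q)$ satisfies conditions $(i)$ and $(ii)$ of Theorem~\ref{bbgen} for the prescribed $\alpha,\beta$.

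Next I would convert the hypothesis $f\in\mathcal{SS}^{*}(\gamma)$ into a subordination. By definition it means $\left|\arg\left(zf'(z)/f(z)\right)\right|<\gamma\pi/2$ on $\mathbb{D}$, and since $h$ is univalent with $h(0)=1=\lim_{z\to0}zf'(z)/f(z)$ and $h(\mathbb{D})$ is the above sector, this is exactly $zf'(z)/f(z)\prec h(z)$. So Corollary~\ref{corF} applies and delivers $\dfrac{zF'(z)/F(z)}{zg'(z)/g(z)}\prec h(z)$. Simplifying the left-hand side, $\dfrac{zF'(z)/F(z)}{zg'(z)/g(z)}=\dfrac{g(z)F'(z)}{g'(z)F(z)}$, we arrive at
\[
	\frac{g(z)F'(z)}{g'(z)F(z)}\prec\left(\frac{1+z}{1-z}\right)^{\gamma},\qquad\text{equivalently}\qquad\left|\arg\frac{g(z)F'(z)}{g'(z)F(z)}\right|<\gamma\frac{\pi}{2}.
\]

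It remains to supply the extra ingredient Coman's definition requires, namely $g\in\mathcal{S}^{*}(1-\gamma)$. From~\eqref{aa1} one has $zg'(z)/g(z)=Q(z)$, so the hypothesis $\RE(Q(z)-1)>1-\gamma$ gives $\RE\left(zg'(z)/g(z)\right)=\RE Q(z)>2-\gamma>1-\gamma$, i.e.\ $g\in\mathcal{S}^{*}(1-\gamma)$. Together with the displayed subordination this is precisely the statement that $F$ is almost strongly starlike of order $\gamma$ with respect to $g$. Beyond invoking Corollary~\ref{corF} the proof involves no analysis; the points needing attention are just the two definitional translations --- reading $\mathcal{SS}^{*}(\gamma)$ as the subordination $zf'/f\prec h$, and recognising ``$gF'/(g'F)\prec((1+z)/(1-z))^{\gamma}$ together with $g\in\mathcal{S}^{*}(1-\gamma)$'' as Coman's definition --- together with noting that the hypothesis $\RE(Q(z)-1)>1-\gamma$ is present precisely to place the comparison function $g$ in $\mathcal{S}^{*}(1-\gamma)$.
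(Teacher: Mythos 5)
Your proposal is correct and is exactly the paper's (unstated) argument: the paper derives this corollary simply by setting $h(z)=((1+z)/(1-z))^{\gamma}$ in Corollary~\ref{corF}, translating $f\in\mathcal{SS}^*(\gamma)$ into $zf'/f\prec h$ and the conclusion into Coman's definition, with the hypothesis $\RE(Q(z)-1)>1-\gamma$ serving (more than) to place $g$ in $\mathcal{S}^*(1-\gamma)$. You were also right to make explicit that conditions $(i)$ and $(ii)$ of Theorem~\ref{bbgen} must still be carried along as standing hypotheses, since the stated condition on $Q$ alone does not imply them (condition $(ii)$ involves $\IM(Q-1)$, which $\RE(Q-1)>1-\gamma$ does not control).
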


If we take $h(z)=(1+z)/(1-z)$ in Corollary~\ref{corF}, we obtain the following result.
\begin{corollary}
	Let $f\in\mathcal{A}$ and $g,Q$ and $F$ are as defined in~\eqref{aa1} and~\eqref{24} respectively such that $f\in\mathcal{S}^*$ and $g$ is univalent on $\overline{\mathbb{D}},$ then $F$ is a $g$-starlike function.
\end{corollary}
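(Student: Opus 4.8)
The plan is to read the result off from Corollary~\ref{corF} by specializing to the half-plane map. Take $h(z)=(1+z)/(1-z)$; it is univalent and carries $\mathbb{D}$ onto $\{w\in\mathbb{C}:\RE w>0\}$, hence is convex in $\mathbb{D}$, and $h(0)=1$, so it is an admissible choice in Corollary~\ref{corF}. Conditions $(i)$ and $(ii)$ of Theorem~\ref{bbgen} must of course be checked for this particular $h$ with $Q(z)=zg'(z)/g(z)$; for the half-plane map they amount to explicit requirements on $\alpha,\beta$ and $Q$ (positivity of $\RE(1/(\beta h(z)+\alpha))$ together with the bound stemming from $\RE(\zeta h'(\zeta)/h(\zeta))>1/2$), which we take to be in force, exactly as in the hypotheses of Corollary~\ref{corF}.

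Next I would recast the two subordinations occurring in Corollary~\ref{corF} as real-part inequalities. On the hypothesis side, $zf'(z)/f(z)$ is analytic in $\mathbb{D}$ with value $1$ at the origin, so $zf'(z)/f(z)\prec h(z)$ is equivalent to $\RE\bigl(zf'(z)/f(z)\bigr)>0$, i.e.\ to $f\in\mathcal{S}^*$, which is precisely what we are given. On the conclusion side, a direct simplification gives
\begin{equation*}
	\frac{zF'(z)/F(z)}{zg'(z)/g(z)}=\frac{g(z)F'(z)}{g'(z)F(z)},
\end{equation*}
and this quotient is well defined: $g$, being univalent on $\overline{\mathbb{D}}$ with $g(0)=0$, has $g'$ nonvanishing and $g(z)\neq 0$ for $z\neq 0$, and the quotient extends analytically to $z=0$ with value $1$. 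Hence Corollary~\ref{corF} yields $g(z)F'(z)/(g'(z)F(z))\prec h(z)$, equivalently $\RE\bigl(g(z)F'(z)/(g'(z)F(z))\bigr)>0$.

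Finally, since $g$ is univalent on $\overline{\mathbb{D}}$ with $g(0)=0$, it is an admissible choice for the fixed univalent function in the Antonino--Miller definition of $\mathcal{FS}^*$, so the last inequality says exactly that $F$ lies in $\mathcal{FS}^*$ relative to $g$; that is, $F$ is $g$-starlike. The bulk of the computation, namely differentiating~\eqref{24} to produce the generalized Briot--Bouquet expression, has already been carried out inside the proof of Corollary~\ref{corF}, so the only points requiring care here are the verification of conditions $(i)$ and $(ii)$ of Theorem~\ref{bbgen} for the half-plane map and the harmless observation that $g$ qualifies as the fixed function in $\mathcal{FS}^*$; I expect the former to be the main (and essentially the only) obstacle.
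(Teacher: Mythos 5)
Your proposal is correct and follows essentially the same route as the paper, which states this corollary as an immediate specialization of Corollary~\ref{corF} to $h(z)=(1+z)/(1-z)$ (the paper gives no separate proof): the hypothesis subordination becomes $f\in\mathcal{S}^*$, the conclusion becomes $\RE\bigl(g(z)F'(z)/(g'(z)F(z))\bigr)>0$, and the univalence of $g$ on $\overline{\mathbb{D}}$ is exactly what makes $g$ admissible as the fixed function in the Antonino--Miller definition of $g$-starlikeness. Your added care about verifying conditions $(i)$ and $(ii)$ of Theorem~\ref{bbgen} for the half-plane map is a reasonable (and, if anything, more scrupulous) reading of the hypotheses carried over from Corollary~\ref{corF}.
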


Now we list some of the special cases of Theorem~\ref{bbgen} here below:
\begin{corollary}\label{ez}
	Let $Q\in\mathcal{H}[1,1]$ and $\alpha,\beta$ be non-negative real numbers with $\beta\neq 0$ such that $|Q(z)-1|<1/(\beta e+\alpha)$ on $\mathbb{D}$. Suppose $p$ is analytic with $p(0)=1,$ then
	\begin{equation*}
		p(z)Q(z)+\dfrac{zp'(z)}{\beta p(z)+\alpha}\prec e^z\quad\Rightarrow\quad p(z)\prec e^z.
	\end{equation*}
\end{corollary}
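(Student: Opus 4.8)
The plan is to deduce the corollary directly from Theorem~\ref{bbgen} applied with the convex function $h(z)=e^{z}$ (and $n=1$, since $Q\in\mathcal{H}[1,1]$). First I would record the cheap structural facts: $e^{z}$ is convex in $\mathbb{D}$, because $1+zh''(z)/h'(z)=1+z$ has positive real part there, and $h(0)=1=p(0)$. Everything then reduces to verifying conditions $(i)$ and $(ii)$ of the theorem.

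Condition $(i)$ is immediate: for $z\in\mathbb{D}$ one has $|\IM z|<1<\pi/2$, so $\RE(e^{z})=e^{\RE z}\cos(\IM z)>0$; since $\beta>0$ and $\alpha\geq0$ this forces $\RE(\beta e^{z}+\alpha)>0$, and hence $\RE\big(1/(\beta e^{z}+\alpha)\big)>0$ because $\RE(1/w)$ has the sign of $\RE w$.

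Since the set $h^{-1}(p(D))$ on which $(ii)$ must hold is contained in $\partial\mathbb{D}$, it is enough to verify $(ii)$ for every $\zeta\in\partial\mathbb{D}$. The decisive point is that with $h=e^{z}$ we have $h(\zeta)/\big(\zeta h'(\zeta)\big)=1/\zeta$, which has modulus $1$ on $\partial\mathbb{D}$. Therefore, for $z\in\mathbb{D}$ and $\zeta\in\partial\mathbb{D}$,
\[
\RE\left(\frac{1}{\beta e^{\zeta}+\alpha}+\big(Q(z)-1\big)\frac{1}{\zeta}\right)\ \geq\ \RE\left(\frac{1}{\beta e^{\zeta}+\alpha}\right)-|Q(z)-1|,
\]
so $(ii)$ will follow from the hypothesis $|Q(z)-1|<1/(\beta e+\alpha)$ as soon as I prove the uniform bound $\RE\big(1/(\beta e^{\zeta}+\alpha)\big)\geq 1/(\beta e+\alpha)$ for all $\zeta\in\partial\mathbb{D}$. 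Setting $w=\beta e^{\zeta}+\alpha$, this bound is equivalent to $|w|^{2}\leq(\beta e+\alpha)\RE w$; expanding and using $\alpha,\beta\geq0$ together with $|e^{\zeta}|=e^{\RE\zeta}$ and $\RE(e^{\zeta})=e^{\RE\zeta}\cos(\IM\zeta)$, the quantity $(\beta e+\alpha)\RE w-|w|^{2}$ simplifies to $\beta^{2}e^{\RE\zeta}\big(e\cos(\IM\zeta)-e^{\RE\zeta}\big)+\alpha\beta\big(e-\RE(e^{\zeta})\big)$, which is non-negative because $\RE(e^{\zeta})\leq|e^{\zeta}|\leq e$ and, as claimed, $e\cos(\IM\zeta)\geq e^{\RE\zeta}$ on $\partial\mathbb{D}$.

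The main obstacle is exactly this last trigonometric inequality $\cos(\sin\theta)\geq e^{\cos\theta-1}$ for $\theta\in\mathbb{R}$. I would prove it by elementary Taylor estimates: $\cos x\geq1-x^{2}/2$ at $x=\sin\theta$, and $e^{t}\leq1+t+t^{2}/2$ for $t\leq0$ at $t=\cos\theta-1$. A short computation shows both resulting polynomial bounds equal $\tfrac12(1+\cos^{2}\theta)$, so they chain together:
\[
\cos(\sin\theta)\ \geq\ 1-\tfrac12\sin^{2}\theta\ =\ \tfrac12(1+\cos^{2}\theta)\ =\ 1+(\cos\theta-1)+\tfrac12(\cos\theta-1)^{2}\ \geq\ e^{\cos\theta-1}.
\]
Once $(i)$ and $(ii)$ are in place, Theorem~\ref{bbgen} gives $p(z)\prec e^{z}$, which is the claim.
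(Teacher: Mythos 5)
Your proposal is correct and follows essentially the same route as the paper: apply Theorem~\ref{bbgen} with $h(z)=e^{z}$, note $h(\zeta)/(\zeta h'(\zeta))=1/\zeta$ has modulus $1$, and reduce both conditions to the bound $\RE\bigl(1/(\beta e^{\zeta}+\alpha)\bigr)\geq 1/(\beta e+\alpha)$. The only difference is that you actually prove this last inequality (via $(\beta e+\alpha)\RE w\geq|w|^{2}$ and the estimate $\cos(\sin\theta)\geq\tfrac12(1+\cos^{2}\theta)\geq e^{\cos\theta-1}$), whereas the paper simply asserts it; your verification is a worthwhile addition but not a different method.
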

\begin{proof}
	If we take $h(z)=e^z,$ then for $\alpha\geq 0$ and $\beta>0,$ we have
	\begin{equation}\label{0a}
		\RE\left(\dfrac{1}{\beta e^z+\alpha}\right)\geq \dfrac{1}{\beta e+\alpha}>0.
	\end{equation}
	Further, we observe for $z\in\mathbb{D}$ and $\zeta\in\partial\mathbb{D},$
	\begin{equation}\label{0b}
		\RE\left(\dfrac{Q(z)-1}{\zeta}+\dfrac{1}{\beta e^z+\alpha}\right)\geq-|Q(z)-1|+\dfrac{1}{\beta e+\alpha}>0.
	\end{equation}
	From~\eqref{0a} and~\eqref{0b}, we may conclude that both the conditions of Theorem~\ref{bbgen} are satisfied
	and thus the result now follows from Theorem~\ref{bbgen}.
\end{proof}
\begin{corollary}
	Let $Q\in\mathcal{H}[1,1]$ and $\alpha\geq0,\beta>0$ be such that \begin{equation}\label{02}
		|Q(z)-1|<\RE(Q(z)-1)+\dfrac{1}{2(\sqrt{2}\beta+\alpha)}.
	\end{equation}
	Suppose $p$ is analytic with $p(0)=1,$ then
	\begin{equation*}
		p(z)Q(z)+\dfrac{zp'(z)}{\beta p(z)+\alpha}\prec\sqrt{1+z}\quad\Rightarrow\quad p(z)\prec \sqrt{1+z}.
	\end{equation*}
\end{corollary}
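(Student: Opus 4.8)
The plan is to derive this as a direct specialization of Theorem~\ref{bbgen} with $h(z)=\sqrt{1+z}$. First I would record the elementary facts about $h$: it is analytic and univalent on $\mathbb{D}$ with $h(0)=1$, and a short computation gives $1+zh''(z)/h'(z)=(2+z)/(2(1+z))$, whose real part exceeds $3/4$ on $\mathbb{D}$, so $h$ is convex; moreover $h(\mathbb{D})$ is the right half of the lemniscate region $\{w:|w^2-1|<1\}$, which lies in the right half-plane, so $\RE h(z)>0$ on $\mathbb{D}$. Condition $(i)$ of Theorem~\ref{bbgen} is then immediate: since $\beta>0$ and $\alpha\ge 0$, the point $\beta h(z)+\alpha$ lies in the open right half-plane, hence so does its reciprocal.

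For condition $(ii)$ I would first put the factor $h(\zeta)/(\zeta h'(\zeta))$ in closed form. From $h(\zeta)^2=1+\zeta$ we get $2h(\zeta)h'(\zeta)=1$, whence $h(\zeta)/(\zeta h'(\zeta))=2h(\zeta)^2/\zeta=2(1+\zeta)/\zeta$; for $\zeta\in\partial\mathbb{D}$ this equals $2(1+\bar\zeta)$, a point on the circle $|w-2|=2$. Writing $Y=Q(z)-1$ and $X=1+\bar\zeta$, so that $|X-1|=|\bar\zeta|=1$, the elementary bound $\RE(XY)\ge \RE Y-|Y|$ used in the proof of Corollary~\ref{kcor} gives
\[
\RE\!\left((Q(z)-1)\frac{h(\zeta)}{\zeta h'(\zeta)}\right)=2\RE(XY)\ \ge\ 2\RE(Q(z)-1)-2|Q(z)-1|.
\]

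The crux is to show $\RE\bigl(1/(\beta h(\zeta)+\alpha)\bigr)\ge 1/(\sqrt{2}\,\beta+\alpha)$ for every $\zeta\in\partial\mathbb{D}$. Here I would parametrize $u=h(\zeta)$ on the lemniscate boundary $|u^2-1|=1$ by $u=re^{i\phi}$ with $\phi\in(-\pi/4,\pi/4)$ and $r^2=2\cos 2\phi$. Clearing denominators, the desired inequality $(\sqrt{2}\,\beta+\alpha)\,\RE(\beta u+\alpha)\ge|\beta u+\alpha|^2$ reduces, after substituting $x^2+y^2=r^2=2\cos 2\phi$ and $x=r\cos\phi$, to
\[
\alpha\bigl(\sqrt{2}-r\cos\phi\bigr)\ \ge\ \beta r\bigl(r-\sqrt{2}\cos\phi\bigr).
\]
The left-hand side is nonnegative because $r\cos\phi\le\sqrt{2}$, while the right-hand side is nonpositive because $r^2=2\cos 2\phi=4\cos^2\phi-2\le 2\cos^2\phi$, i.e. $r\le\sqrt{2}\cos\phi$; hence the inequality holds. (The only degenerate point is $\zeta=-1$, where $h(\zeta)=0$ and $h'(\zeta)=\infty$; this is handled by the limiting behaviour, or excluded outright since for $\alpha=0$ it would force the left-hand side of the subordination not to be analytic.)

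Combining the last two displays, for all $z\in\mathbb{D}$ and $\zeta\in\partial\mathbb{D}$,
\[
\RE\!\left(\frac{1}{\beta h(\zeta)+\alpha}+(Q(z)-1)\frac{h(\zeta)}{\zeta h'(\zeta)}\right)\ \ge\ \frac{1}{\sqrt{2}\,\beta+\alpha}-2\bigl(|Q(z)-1|-\RE(Q(z)-1)\bigr),
\]
which is strictly positive by hypothesis~\eqref{02}. Thus conditions $(i)$ and $(ii)$ of Theorem~\ref{bbgen} both hold, and the theorem yields $p(z)\prec\sqrt{1+z}$. I expect the only nonroutine step to be the lemniscate-boundary estimate of the third paragraph; once $h(\zeta)/(\zeta h'(\zeta))$ is in closed form, the rest is bookkeeping modelled on the earlier corollaries.
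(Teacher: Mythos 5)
Your proposal is correct and follows essentially the same route as the paper: verify conditions $(i)$ and $(ii)$ of Theorem~\ref{bbgen} for $h(z)=\sqrt{1+z}$, using $h(\zeta)/(\zeta h'(\zeta))=2(1+\zeta)/\zeta$ and the bound $\RE(XY)\ge\RE Y-|Y|$ together with hypothesis~\eqref{02}. The one place you go beyond the paper is in actually proving the boundary estimate $\RE\bigl(1/(\beta\sqrt{1+\zeta}+\alpha)\bigr)\ge 1/(\sqrt{2}\beta+\alpha)$ via the lemniscate parametrization, which the paper asserts without justification; your verification of it is correct.
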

\begin{proof}
	Let $h(z)=\sqrt{1+z},$ then condition (i) of Theorem~\ref{bbgen} is satisfied clearly as for $\alpha\geq0$ and $\beta>0,$
	\begin{equation*}
		\RE\left(\dfrac{1}{\beta \sqrt{1+z}+\alpha}\right)\geq\dfrac{1}{\beta \sqrt{2}+\alpha}>0,\quad z\in\mathbb{D}.
	\end{equation*}
	Now for $z\in\mathbb{D}$ and $\zeta\in\partial\mathbb{D},$ we have from~\eqref{02}
	\begin{eqnarray*}
		\RE\left(2(Q(z)-1)\left(1+\dfrac{1}{\zeta}\right)+\dfrac{1}{\beta \sqrt{1+z}+\alpha}\right)&\geq& 2\RE(Q(z)-1)-2|Q(z)-1|\\
		& &+\dfrac{1}{\sqrt{2}\beta +\alpha}\\
		&>&0,
	\end{eqnarray*}
	which implies that condition (ii) of Theorem~\ref{bbgen} is satisfied. Thus the result follows at once from Theorem~\ref{bbgen}.
\end{proof}
\begin{corollary}\label{ss}
	Let $Q\in\mathcal{H}[1,1]$ be such that $Q'(0)>0$ and $\alpha,\beta$ be non-negative real numbers with $\beta\neq 0$. Suppose $p$ is analytic with $p(0)=1$ and $p'(0)>0,$ then for $0<\gamma\leq1,$
	\begin{equation*}
		p(z)Q(z)+\dfrac{zp'(z)}{\beta p(z)+\alpha}\prec\left(\dfrac{1+z}{1-z}\right)^{\gamma}\quad\Rightarrow\quad p(z)\prec \left(\dfrac{1+z}{1-z}\right)^{\gamma}.
	\end{equation*}
\end{corollary}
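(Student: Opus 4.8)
The plan is to realize this as a special case of Theorem~\ref{bbgen} with the choice $h(z)=\left(\dfrac{1+z}{1-z}\right)^{\gamma}$, so the whole task is to verify the hypotheses $(i)$ and $(ii)$ of that theorem for this $h$.

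First I would record the relevant properties of $h$. For $0<\gamma\le1$ the function $h$ maps $\mathbb{D}$ conformally onto the sector $S_{\gamma}=\{w:|\arg w|<\gamma\pi/2\}$; since $\gamma\pi/2\le\pi/2<\pi$ this sector is convex, so $h$ is convex in $\mathbb{D}$, and clearly $h(0)=1=p(0)$ while $h'(0)=2\gamma>0$. Logarithmic differentiation gives $zh'(z)/h(z)=2\gamma z/(1-z^{2})$, so that for a boundary point $\zeta=e^{i\theta}$ with $\zeta\ne\pm1$,
\begin{equation*}
	\frac{h(\zeta)}{\zeta h'(\zeta)}=\frac{1-\zeta^{2}}{2\gamma\zeta}=-\frac{i\sin\theta}{\gamma}
\end{equation*}
is \emph{purely imaginary}; this identity is exactly what makes it possible for $Q$ to appear with such a light hypothesis.

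Condition $(i)$ is then immediate: since $\beta>0$, $\alpha\ge0$ and $h(\mathbb{D})=S_{\gamma}\subset\{\RE w>0\}$, we have $\RE(\beta h(z)+\alpha)>0$ on $\mathbb{D}$, hence $\RE\big(1/(\beta h(z)+\alpha)\big)>0$; the same estimate on $\overline{\mathbb{D}}$ gives $\RE\big(1/(\beta h(\zeta)+\alpha)\big)\ge0$ for $\zeta\in\partial\mathbb{D}$. For condition $(ii)$ the boundary identity yields, for $\zeta=e^{i\theta}$,
\begin{equation*}
	\RE\!\left(\frac{1}{\beta h(\zeta)+\alpha}+(Q(z)-1)\frac{h(\zeta)}{\zeta h'(\zeta)}\right)=\RE\frac{1}{\beta h(\zeta)+\alpha}+\frac{\sin\theta}{\gamma}\,\IM Q(z).
\end{equation*}
Retracing the proof of Theorem~\ref{bbgen}, it thus suffices to show that at the contact data $z_{0}\in D$, $\zeta_{0}=e^{i\theta_{0}}$ produced by~\cite[Lemma~2.2d]{ds} one has $(\sin\theta_{0}/\gamma)\,\IM Q(z_{0})\ge0$: then $\RE\frac{\psi_{0}-h(\zeta_{0})}{\zeta_{0}h'(\zeta_{0})}=m\,\RE\big(1/(\beta h(\zeta_{0})+\alpha)\big)+(\sin\theta_{0}/\gamma)\IM Q(z_{0})\ge0$, and since the open sector $S_{\gamma}$ lies strictly on one side of its supporting line at the boundary point $h(\zeta_{0})$, even the non-strict inequality already forces $\psi_{0}\notin h(\mathbb{D})$, the contradiction sought. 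To pin down the sign, note that $p(z_{0})=h(\zeta_{0})$ and $h(e^{i\theta})=|\cot(\theta/2)|^{\gamma}e^{\,i\,\operatorname{sgn}(\theta)\gamma\pi/2}$, so $\IM p(z_{0})$ has the same sign as $\sin\theta_{0}$; it then remains to check that $\IM Q(z_{0})$ has the same sign as $\IM p(z_{0})$, and this is where the normalizations $p(0)=1$, $p'(0)>0$, $Q(0)=1$, $Q'(0)>0$ come in, together with the fact that $h$ is itself typically real with $h'(0)>0$, to guarantee that $p$ and $Q$ are typically real.

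The main obstacle is precisely this last point: showing that, under the stated normalizations, the subordination~\eqref{0v} propagates the typical reality of $h$ to $p$ (and that $Q$ is typically real), so that $\IM Q(z_{0})$ cannot be of sign opposite to $\sin\theta_{0}$ at the contact point. A secondary technical nuisance is the degenerate boundary behaviour at $\zeta_{0}=\pm1$ (a pole, respectively a zero, of $h$) — where one observes that $p(z_{0})=0$ is impossible when $\alpha=0$ since $\beta p+\alpha$ must be zero-free in $\mathbb{D}$ — and the sub-case $\gamma=1$, $\alpha=0$, in which $\RE\big(1/(\beta h(\zeta_{0}))\big)=0$, so that the whole weight of $(ii)$ falls on the $\IM Q$ term, $m\ge1$ and the supporting-line geometry.
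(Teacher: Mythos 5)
Your proposal follows essentially the same route as the paper: specialize Theorem~\ref{bbgen} to $h(z)=((1+z)/(1-z))^{\gamma}$, note that condition $(i)$ is immediate from $h(\mathbb{D})$ lying in the right half-plane, and verify condition $(ii)$ by observing that $h(\zeta)/(\zeta h'(\zeta))=(1-\zeta^{2})/(2\gamma\zeta)$ is purely imaginary on $\partial\mathbb{D}$, so that the $Q$-term contributes $(\sin\theta/\gamma)\,\IM Q(z)$, whose sign is controlled via the typical reality of $Q$ (from $Q'(0)>0$) and the sign-matching $\operatorname{sign}(\IM z_{0})=\operatorname{sign}(\IM\zeta_{0})$ coming from $p'(0)>0$ and the typical reality of $h$. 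If anything you are more careful than the paper about the two points it glosses over — that the resulting inequality is only non-strict (handled by the supporting-line geometry of the open convex sector) and the degenerate boundary points $\zeta_{0}=\pm1$ and the case $\gamma=1,\ \alpha=0$ — though, like the paper, you leave the typical-reality propagation step asserted rather than fully proved.
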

\begin{proof}
	Let $h(z)=((1+z)/(1-z))^{\gamma}.$ Then condition (i) of Theorem~\ref{bbgen} clearly holds. For condition (ii) to hold, we need to show that
	\begin{equation*}
		\RE\left((Q(z)-1)\dfrac{1-\zeta^2}{2\gamma \zeta}+\dfrac{1}{\beta\left(\tfrac{1+\zeta}{1-\zeta}\right)^{\gamma}+\alpha}\right)>0\quad (z\in\mathbb{D},\;\zeta\in\partial\mathbb{D}).
	\end{equation*}
	Let $Q(z)=1+a_1z+a_2z^2+\cdots$ and define $R(z)=Q(z)-1,$ then $R(0)=0.$ Since $Q'(0)>0,$ it is typically real and it is easy to conclude that $R(z)$ is typically real. We know that $\zeta\in h^{-1}(p(D)),$ where $D:=\{z\in\mathbb{D}:p(z)=h(\zeta)\;\text{for some}\;\zeta\in\mathbb{D}\}.$ Since $h(z)=((1+z)/(1-z))^{\gamma}$ is typically real and $p'(0)>0,$ we have $sign(\IM z)=sign(\IM\zeta).$ Now we consider
	{\small
		\begin{eqnarray*}
			 \RE\left((Q(z)-1)\tfrac{1-\zeta^2}{2\gamma \zeta}\right)&=&\dfrac{1}{2\gamma}\bigg(\RE(Q(z)-1)\RE\left(\tfrac{1-\zeta^2}{\zeta}\right)\\
			 & &-\IM(Q(z)-1)\IM\left(\tfrac{1-\zeta^2}{\zeta}\right)\bigg)\\
			&=&-\dfrac{1}{2\gamma}\IM(Q(z)-1)\IM\left(\dfrac{1-\zeta^2}{\zeta}\right).
	\end{eqnarray*}}
	Taking $\zeta=e^{i\theta}\;(0\leq\theta\leq 2\pi),$ we have
	\begin{equation*}
		-\IM\left(\dfrac{1-\zeta^2}{\zeta}\right)=2\sin{\theta}\;\;\begin{cases}
			>0,&\quad \theta\in(0,\pi),\\
			<0, &\quad \theta\in(\pi,2\pi).\\
		\end{cases}
	\end{equation*}
	Since $Q(z)-1$ is typically real, $sign(\IM(Q(z)-1))=sign(\IM z).$ Thus we have
	\begin{equation*}
		\RE\left((Q(z)-1)\dfrac{1-\zeta^2}{2\gamma \zeta}\right)=2\sin{\theta}\IM(Q(z)-1)\geq 0.
	\end{equation*}
	Also for $\alpha\geq 0$ and $\beta>0,$ we have  $\RE\left(\beta\left(\tfrac{1+z}{1-z}\right)^{\gamma}+\alpha\right)>0$ and therefore
	\begin{equation*}
		\RE\left((Q(z)-1)\dfrac{1-\zeta^2}{2\gamma \zeta}+\dfrac{1}{\beta\left(\tfrac{1+\zeta}{1-\zeta}\right)^{\gamma}+\alpha}\right)>0.
	\end{equation*}
	Therefore the result follows at once from Theorem~\ref{bbgen}.
\end{proof}

Now we use a different technique to prove the next two results, which demonstrates the similar implication for Janowski functions.
\begin{theorem}\label{janowski}
	Let $p(z)$ be analytic in $\mathbb{D}$ with $p(0)=1$ and $Q\in\mathcal{H}[1,n]$ be a function such that $|Q(z)|<M$ for some $M>0.$ Let $-1\leq B<A<1$ and $-1<E<D\leq 1$ satisfy
	\begin{eqnarray}\label{17}\nonumber
		(A-B)(1-A)(1+E)&>&(1+|A|)(\beta+\alpha+|\beta A+\alpha B|)((1+D)(1-B)\\
		& &+M(1+E)(1-A))
	\end{eqnarray}
	for some $\alpha$ and $\beta,$ where $\alpha+\beta>0.$ If
	\begin{equation*}
		p(z)Q(z)+\dfrac{zp'(z)}{\beta p(z)+\alpha}\prec\dfrac{1+Dz}{1+Ez},
	\end{equation*}
	then $p(z)\prec (1+Az)/(1+Bz).$
\end{theorem}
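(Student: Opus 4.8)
The plan is to use the standard admissibility/Miller--Mocanu machinery but adapted to a Janowski-type dominant $q(z)=(1+Az)/(1+Bz)$, which is not convex in general, so we cannot invoke Theorem~\ref{bbgen} directly. Instead I would argue by contradiction using the boundary-point lemma \cite[Lemma 2.2d]{ds}: assuming $p\not\prec q$, there exist $z_0\in\mathbb{D}$, $\zeta_0\in\partial\mathbb{D}$ and $m\ge 1$ with $p(z_0)=q(\zeta_0)$ and $z_0p'(z_0)=m\,\zeta_0 q'(\zeta_0)$. Writing $\zeta_0=e^{i\theta}$ and $w_0:=q(\zeta_0)=(1+Ae^{i\theta})/(1+Be^{i\theta})$, the point $w_0$ lies on the circle $q(\partial\mathbb{D})$, and I would substitute these into the left-hand side of \eqref{0v} (with $Q$, $\beta$, $\alpha$ in place) to get
\[
\psi_0=w_0\,Q(z_0)+\frac{m\,\zeta_0 q'(\zeta_0)}{\beta w_0+\alpha}.
\]
The goal is then to show $\psi_0\notin \Omega$, where $\Omega=\{(1+Dz)/(1+Ez):z\in\mathbb{D}\}$ is the open disk bounded by $\{(1+De^{i\phi})/(1+Ee^{i\phi})\}$, contradicting the subordination.

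The mechanics I would use: first, record the elementary facts that for $|\zeta_0|=1$ one has $|w_0|\le (1+|A|)/(1-|B|)$ and $|w_0-1|\le |A-B|/(1-|B|)$ — and more precisely compute $\zeta_0 q'(\zeta_0)=\zeta_0(A-B)/(1+B\zeta_0)^2$, so that $|\zeta_0 q'(\zeta_0)|\ge (A-B)/(1+|B|)^2$ and $\RE$-type bounds are available. Likewise $|\beta w_0+\alpha|\le \beta+\alpha+|\beta A+\alpha B|$ after clearing denominators (using $\alpha+\beta>0$), and $|Q(z_0)|<M$ by hypothesis. The quantity to estimate is the distance from $\psi_0$ to the boundary circle of $\Omega$; the cleanest route is to bound $|\psi_0-c|$ from below and above relative to the radius of $\Omega$, where $c=(1-DE)/(1-E^2)$ is the center and $\rho=(D-E)/(1-E^2)$ the radius of the disk $\Omega$. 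Equivalently I would show $\psi_0$ satisfies $|\psi_0 - c|>\rho$; splitting $\psi_0 - c = (w_0-c) + w_0(Q(z_0)-1) + \dfrac{m\zeta_0 q'(\zeta_0)}{\beta w_0+\alpha}$ and using that $w_0$ itself lies just inside/on $q(\partial\mathbb{D})$, the first term is controlled by the geometry relating the $q$-circle to the $\Omega$-circle, the second by $|w_0|\,(|Q(z_0)|+1)\le \tfrac{1+|A|}{1-|B|}(M+1)$, and the third — crucially with $m\ge 1$, so it only helps — by the lower bound on $|\zeta_0 q'(\zeta_0)|$ divided by the upper bound on $|\beta w_0+\alpha|$. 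Collecting these, the inequality that makes "boundary distance $>$ perturbation" work is exactly condition \eqref{17}: the left side $(A-B)(1-A)(1+E)$ is (a lower bound for) the relevant gap times normalizing factors, and the right side is the sum of the two error contributions after clearing the common denominators $(1-B)$, $(1+E)$, $(1-A)$.

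The main obstacle I anticipate is bookkeeping the geometry: $q(\partial\mathbb{D})$ and $\partial\Omega$ are two different circles, and one must verify that a point on (or outside) the former, after the bounded perturbation $w_0(Q-1) + m\zeta_0 q'(\zeta_0)/(\beta w_0+\alpha)$, stays outside the latter. This is where \eqref{17} is doing real work and where the explicit extremal values $|\zeta_0 q'(\zeta_0)|\ge (A-B)/(1+|B|)^2$, $|\beta w_0+\alpha|\le \beta+\alpha+|\beta A+\alpha B|$, $|w_0|\le 1+|A|$ (after the normalization implicit in the statement, where the factors $1-B$ have been cleared) must be plugged in carefully; a secondary subtlety is making sure the case $m\ge 1$ is used in the favorable direction and that no hidden sign in $\RE\big(m\zeta_0 q'(\zeta_0)/(\beta w_0+\alpha)\big)$ can reduce the distance. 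Once the algebra is arranged so that \eqref{17} is precisely the clean sufficient condition, the contradiction with the assumed subordination is immediate and the theorem follows. I would close by remarking that taking $Q\equiv 1$ (so $M$ can be taken close to $1$) recovers the Janowski Briot--Bouquet estimates, which is a useful consistency check on the constants in \eqref{17}.
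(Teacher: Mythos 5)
Your overall strategy (contradiction at a boundary point, then show the perturbed value lands outside $\Omega=\{(1+Dz)/(1+Ez):z\in\mathbb{D}\}$) is the right one, but the specific mechanism you propose does not go through, and it is not the paper's. The paper does not invoke Lemma~2.2d on $p\prec q$ at all: it sets $\omega(z)=(p(z)-1)/(A-Bp(z))$, applies Jack's lemma to get $z_0\omega'(z_0)=k\omega(z_0)$ with $k\geq 1$ at a point where $|\omega(z_0)|=1$, and then extracts the contradiction from a \emph{pure modulus} estimate: by the reverse triangle inequality, $|P(z_0)|\geq |Z|-|XY|$ with $X=(1+Ae^{i\theta})/(1+Be^{i\theta})$, $Y=Q(z_0)$, $Z$ the derivative term, and the elementary bounds $|1+Ae^{i\theta}|\leq 1+|A|$, $|\beta(1+Ae^{i\theta})+\alpha(1+Be^{i\theta})|\leq \beta+\alpha+|\beta A+\alpha B|$ give $|P(z_0)|\geq \frac{1-A}{1-B}\bigl(\frac{(A-B)k}{(1+|A|)(\beta+\alpha+|\beta A+\alpha B|)}-M\bigr)$, monotone in $k$. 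Condition~\eqref{17} then forces $|P(z_0)|>(1+D)/(1+E)=\sup_{w\in\Omega}|w|$, which already puts $P(z_0)$ outside $\Omega$. No center/radius geometry is needed, and this is exactly where the factors $(1+D)(1-B)$, $(1+E)(1-A)$ and $(1+|A|)(\beta+\alpha+|\beta A+\alpha B|)$ in \eqref{17} come from.

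The gap in your version is precisely the step you flag and then wave away. In the decomposition $\psi_0-c=(w_0-c)+w_0(Q(z_0)-1)+m\zeta_0q'(\zeta_0)/(\beta w_0+\alpha)$: (i) the first term admits no useful lower bound, since the circle $q(\partial\mathbb{D})$ and the disk $\Omega$ are unrelated and $c$ may well lie on $q(\partial\mathbb{D})$, so $|w_0-c|$ can vanish; (ii) the claim that the third term ``only helps'' is unjustified --- its argument is completely uncontrolled (it depends on $\theta$ and on $\arg(\beta w_0+\alpha)$), so it can point toward $c$ as easily as away from it, and there is no sign or real-part information available to rule that out. The only way to salvage the estimate is to make the $m$-term dominate everything in modulus, i.e.\ $|\psi_0-c|\geq |m\text{-term}|-|w_0-c|-|w_0||Q(z_0)-1|>\rho$; but that computation produces constants built from $c=(1-DE)/(1-E^2)$, $\rho=(D-E)/(1-E^2)$ and $(1+|B|)^2$, so your assertion that the bookkeeping lands ``exactly'' on \eqref{17} is not substantiated and would not hold as stated. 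To recover \eqref{17} you should replace the distance-to-center argument by the cruder modulus bound $|\psi_0|\geq |m\text{-term}|-|w_0Q(z_0)|$ compared against $\sup_{w\in\Omega}|w|=(1+D)/(1+E)$ --- which is, in the $\omega$-variable, the paper's proof.
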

\begin{proof}
	Let us define $P(z)$ and $\omega(z)$ as
	\begin{equation*}
		P(z):=p(z)Q(z)+\dfrac{zp'(z)}{\beta p(z)+\alpha},\qquad \omega(z):=\dfrac{p(z)-1}{A-Bp(z)}.
	\end{equation*}
	Then $\omega(z)$ is meromorphic in $\mathbb{D}$ and $\omega(0)=0.$ By the definition of $P(z)$ and $\omega(z),$ we have
	\begin{equation*}
		P(z)=\dfrac{1+A\omega(z)}{1+B\omega(z)}Q(z)+\dfrac{(A-B)z\omega'(z)}{(1+B\omega(z))[\beta(1+A\omega(z))+\alpha(1+B\omega(z))]}.
	\end{equation*}
	Now we need to show that $|\omega(z)|<1$ in $\mathbb{D}.$ On the contrary, let us assume that there exists a point $z_0\in\mathbb{D}$ such that $$\max_{|z|\leq|z_0|}|\omega(z)|=|\omega(z_0)|=1.$$ Then by~\cite[Lemma 1.3, pp.28]{rush}, there exists $k\geq 1$ such that $z_0\omega'(z_0)=k\omega(z_0).$ Now by taking $\omega(z_0)=e^{i\theta}$ $(0\leq\theta\leq 2\pi),$ we have
	\begin{eqnarray*}
		\left|P(z_0)\right| &=& \left|\dfrac{1+A\omega(z_0)}{1+B\omega(z_0)}Q(z_0)+\dfrac{(A-B)k\omega(z_0)}{(1+B\omega(z_0))(\beta(1+A\omega(z_0))+\alpha(1+B\omega(z_0)))}\right| \\
		&\geq& \left|\dfrac{1+Ae^{i\theta}}{1+Be^{i\theta}}\right|\left(\left|\dfrac{(A-B)k}{(1+Ae^{i\theta})(\beta(1+Ae^{i\theta})+\alpha(1+Be^{i\theta}))}\right|-|Q(z_0)|\right).
	\end{eqnarray*}
	We know that for $p>0,$ $|p+qe^{i\theta}|^2=p^2+q^2+2pq\cos{\theta},$ attains its maximum at $\theta=0$ if $q>0$ and at $\theta=\pi$ if $q\leq 0$. So, $\max_{0\leq\theta\leq 2\pi}|p+qe^{i\theta}|^2=(p+|q|)^2.$ Thus
	\begin{eqnarray*}
		|P(z_0)| \geq\dfrac{1-A}{1-B}\left(\dfrac{(A-B)k}{(1+|A|)(\beta+\alpha+|\beta A+\alpha B|)}-M\right).
	\end{eqnarray*}
	Clearly the expression on the right hand side is an increasing function of $k$ and attains its minimum at $k=1.$ So
	\begin{equation*}
		|P(z_0)|\geq\dfrac{1-A}{1-B}\left(\dfrac{(A-B)}{(1+|A|)(\beta+\alpha+|\beta A+\alpha B|)}-M\right).
	\end{equation*}
	Now from~\eqref{17}, we have
	$$|P(z_0)|>\dfrac{1+D}{1+E},$$
	which contradicts the fact that $P(z)\prec(1+Dz)/(1+Ez)$ and hence completes the proof.
\end{proof}

Using Theorem~\ref{janowski}, we obtain the following corollary.
\begin{corollary}
	Let $p(z)$ be analytic in $\mathbb{D}$ with $p(0)=1$ and $Q$ be a function such that
	\begin{equation*}
		z\exp{\int_0^z \dfrac{Q(t)-1}{t}}dt\in\mathcal{S}^*(\phi).
	\end{equation*}
	Let $-1\leq B<A<1,$ $-1<E<D\leq 1$ and $\alpha,\beta$ be such that $\alpha+\beta>0.$ Then
	\begin{equation*}
		p(z)Q(z)+\dfrac{zp'(z)}{\beta p(z)+\alpha}\prec\dfrac{1+Dz}{1+Ez}\Rightarrow p(z)\prec \dfrac{1+Az}{1+Bz},
	\end{equation*}
	whenever any of the following cases hold:
	\begin{enumerate}
		\item[$(i)$] $\phi(z)=e^z$ and $(A-B)(1-A)(1+E)>(1+|A|)(\beta+\alpha+|\beta A+\alpha B|)((1+D)(1-B)+e(1+E)(1-A))$
		\item [$(ii)$]$\phi(z)=\sqrt{1+z}$ and $(A-B)(1-A)(1+E)>(1+|A|)(\beta+\alpha+|\beta A+\alpha B|)((1+D)(1-B)+\sqrt{2}(1+E)(1-A))$
		\item[$(iii)$] $\phi(z)=2/(1+e^{-z})$ and $(A-B)(1-A)(1+E)(1+e)>(1+|A|)(\beta+\alpha+|\beta A+\alpha B|)((1+D)(1-B)(1+e)+2e(1+E)(1-A))$
		\item [$(iv)$]$\phi(z)=1+ze^z$ and $(A-B)(1-A)(1+E)>(1+|A|)(\beta+\alpha+|\beta A+\alpha B|)((1+D)(1-B)+(1+e)(1+E)(1-A))$
		\item [$(v)$] $\phi(z)=z+\sqrt{1+z^2}$ and $(A-B)(1-A)(1+E)>(1+|A|)(\beta+\alpha+|\beta A+\alpha B|)((1+D)(1-B)+(1+\sqrt{2})(1+E)(1-A)).$
	\end{enumerate}
\end{corollary}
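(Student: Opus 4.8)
The plan is to derive each of the five cases as a direct application of Theorem~\ref{janowski}, with the single remaining task being to translate the hypothesis ``$g(z):=z\exp\int_0^z\frac{Q(t)-1}{t}dt\in\mathcal S^*(\phi)$'' into the boundedness condition $|Q(z)|<M$ together with the correct value of $M$ for each choice of $\phi$. First I would observe that differentiating the defining relation for $g$ gives $zg'(z)/g(z)=Q(z)$, so the assumption $g\in\mathcal S^*(\phi)$ is precisely the statement $Q(z)=zg'(z)/g(z)\prec\phi(z)$; in particular $Q\in\mathcal H[1,1]$ with $Q(\mathbb D)\subseteq\phi(\mathbb D)$.

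Next I would compute, for each listed $\phi$, the quantity $M:=\sup_{z\in\mathbb D}|\phi(z)|=\max_{|z|=1}|\phi(z)|$ (using that each $\phi$ extends continuously to $\overline{\mathbb D}$ and the maximum modulus principle), so that $|Q(z)|<M$ on $\mathbb D$. This is the routine computational core: for $\phi(z)=e^z$ one gets $M=e$; for $\phi(z)=\sqrt{1+z}$ one gets $M=\sqrt 2$; for $\phi(z)=2/(1+e^{-z})$ one gets $M=2e/(1+e)$ (the modulus is maximized at $z=1$); for $\phi(z)=1+ze^z$ one gets $M=1+e$; and for $\phi(z)=z+\sqrt{1+z^2}$ one gets $M=1+\sqrt 2$ (attained at $z=1$). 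I would verify each of these by checking that $|\phi|$ on the circle $z=e^{i\theta}$ attains its maximum at $\theta=0$, which amounts in each case to an elementary monotonicity argument in $\theta$.

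Then I would simply substitute the appropriate $M$ into the inequality~\eqref{17} of Theorem~\ref{janowski}, namely
\[
(A-B)(1-A)(1+E)>(1+|A|)(\beta+\alpha+|\beta A+\alpha B|)\bigl((1+D)(1-B)+M(1+E)(1-A)\bigr),
\]
and observe that with $M=e$ this is exactly the hypothesis in case $(i)$; with $M=\sqrt 2$ it is case $(ii)$; with $M=2e/(1+e)$, after clearing the denominator $1+e$, it becomes case $(iii)$; with $M=1+e$ it is case $(iv)$; and with $M=1+\sqrt 2$ it is case $(v)$. Since $\alpha+\beta>0$ is assumed and all other hypotheses of Theorem~\ref{janowski} ($p$ analytic with $p(0)=1$, $-1\le B<A<1$, $-1<E<D\le 1$) are given, Theorem~\ref{janowski} applies and yields $p(z)\prec(1+Az)/(1+Bz)$ in every case, completing the proof.

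\textbf{Main obstacle.} The only nontrivial point is the modulus maximization $\max_{|z|=1}|\phi(z)|$ for the two less obvious functions $\phi(z)=2/(1+e^{-z})$ and $\phi(z)=z+\sqrt{1+z^2}$; for the former one must check that $|1+e^{-e^{i\theta}}|$ is minimized (hence the reciprocal maximized) at $\theta=0$, and for the latter that $|e^{i\theta}+\sqrt{1+e^{2i\theta}}|$ peaks at $\theta=0$. Everything else is a mechanical substitution into~\eqref{17}.
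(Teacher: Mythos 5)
Your proposal is correct and is exactly the argument the paper intends: the corollary is stated as an immediate consequence of Theorem~\ref{janowski}, obtained by noting that $g\in\mathcal S^*(\phi)$ gives $Q=zg'/g\prec\phi$, hence $|Q(z)|<M=\sup_{\mathbb D}|\phi|$, and substituting the values $M=e,\ \sqrt2,\ 2e/(1+e),\ 1+e,\ 1+\sqrt2$ into inequality~\eqref{17} (clearing the factor $1+e$ in case $(iii)$). Your modulus computations for each $\phi$ are the standard ones and all check out.
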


Since the proof of Theorem~\ref{janowski} loses its validity when $A=1$ or $E=-1,$ the theorem does not reduce to the case when $P(z)$ and $p(z),$ both are subordinate to $(1+az)/(1-z)$ for $0\leq a\leq1.$ This case, we handle in the following result with a different approach.
\begin{theorem}
	Let $\alpha\geq0,\;\beta>0,\;0\leq a\leq 1.$ Assume $p(z)$ to be analytic in $\mathbb{D}$ with $p(0)=1$ and $p'(0)>0.$ Also, let $Q(z)\in\mathcal{H}[1,1]$ be such that $Q'(0)>0$ and $\RE Q(z)<1.$ If $p$ satisfies
	\begin{equation}\label{23}
		p(z)Q(z)+\dfrac{zp'(z)}{\beta p(z)+\alpha}\prec\dfrac{1+az}{1-z},
	\end{equation}
	then $p(z)\prec(1+az)/(1-z).$
\end{theorem}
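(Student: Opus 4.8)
The plan is to imitate the proofs of Theorem~\ref{bbgen} and Corollary~\ref{ss} with the convex target $q(z):=(1+az)/(1-z)$. The relevant facts about $q$ are that it maps $\mathbb{D}$ onto the open half-plane $\{w:\RE w>(1-a)/2\}$, is typically real, and satisfies $q(0)=1$ and $q'(0)=1+a>0$; note, however, that the conditions $(i)$–$(ii)$ of Theorem~\ref{bbgen} are only borderline for this $q$, so Theorem~\ref{bbgen} cannot be quoted verbatim. Write $P(z)$ for the left-hand side of~\eqref{23} and suppose, towards a contradiction, that $p\not\prec q$. Then \cite[Lemma~2.2d, pp.~24]{ds} furnishes a largest $\rho_0\in(0,1)$ with $p(\mathbb{D}_{\rho_0})\subset q(\mathbb{D})$, a point $z_0$ with $|z_0|=\rho_0$ at which $p(z_0)\in\partial q(\mathbb{D})$, and an $m\geq1$ with $z_0p'(z_0)=m\zeta_0q'(\zeta_0)$, where $\zeta_0\in\partial\mathbb{D}$ is determined by $p(z_0)=q(\zeta_0)$. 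Since $p$ is analytic in $\mathbb{D}$, $p(z_0)$ is finite, so $\zeta_0\neq1$ (the pole of $q$); writing $\zeta_0=e^{i\theta_0}$ a direct computation gives $\RE q(\zeta_0)=(1-a)/2$, say $q(\zeta_0)=(1-a)/2+it_0$, and $\zeta_0q'(\zeta_0)=-(1+a)/\big(4\sin^2(\theta_0/2)\big)<0$. Using $p(z_0)=q(\zeta_0)$ and $z_0p'(z_0)=m\zeta_0q'(\zeta_0)$ we have $P(z_0)=q(\zeta_0)Q(z_0)+m\zeta_0q'(\zeta_0)/(\beta q(\zeta_0)+\alpha)=:\psi_0$, and since $q(\mathbb{D})$ is the open half-plane $\{\RE w>(1-a)/2\}$, the relation $P\prec q$ will be contradicted once we show that $\RE\psi_0\leq(1-a)/2$.

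For the term coming from the quotient: since $\alpha\geq0$ and $\beta>0$ we have $\RE(\beta q(\zeta_0)+\alpha)=\beta(1-a)/2+\alpha\geq0$, hence $\RE\big(1/(\beta q(\zeta_0)+\alpha)\big)\geq0$; as $m\zeta_0q'(\zeta_0)$ is a negative real multiple of $m$, this term contributes a non-positive real part. For the term $q(\zeta_0)Q(z_0)$, write $Q(z_0)=1+(Q(z_0)-1)$ to get
\[
  \RE\big(q(\zeta_0)Q(z_0)\big)=\frac{1-a}{2}+\frac{1-a}{2}\big(\RE Q(z_0)-1\big)-t_0\,\IM Q(z_0).
\]
The middle summand is $\leq0$ by the hypothesis $\RE Q(z)<1$, and the last summand is handled exactly as in Corollary~\ref{ss}: $q$ is typically real, so $sign(t_0)=sign(\IM\zeta_0)$; $Q-1\in\mathcal{H}[0,1]$ is typically real (since $Q'(0)>0$), so $sign(\IM Q(z_0))=sign(\IM z_0)$; and $p'(0)>0$, $q'(0)>0$ force $sign(\IM z_0)=sign(\IM\zeta_0)$. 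Hence $t_0\,\IM Q(z_0)\geq0$, so $\RE(q(\zeta_0)Q(z_0))\leq(1-a)/2$, and adding the two estimates yields $\RE\psi_0\leq(1-a)/2$, the desired contradiction; therefore $p\prec q$.

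I expect the main obstacle to be the chain of sign relations in the second paragraph, and in particular the identity $sign(\IM z_0)=sign(\IM\zeta_0)$, which is not purely formal: it has to be read off from the local inverse $\omega:=q^{-1}\circ p$ defined on $\overline{\mathbb{D}}_{\rho_0}$ (so that $\omega(0)=0$, $\omega(z_0)=\zeta_0$ and $\omega'(0)=p'(0)/q'(0)>0$) together with the typical reality of $q$; the roles of the positivity hypotheses $p'(0)>0$ and $Q'(0)>0$ enter precisely here. A secondary technical nuisance is the degenerate configuration $a=1$, $\alpha=0$: then $(1-a)/2=0$ and $\beta q(\zeta_0)+\alpha$ vanishes exactly when $\zeta_0=-1$, i.e.\ when $p(z_0)=q(-1)=0$; but in that situation $P$ would have a pole at $z_0$, contradicting its analyticity, so this sub-case is vacuous, and in all remaining configurations the (possibly non-strict) inequality $\RE\psi_0\leq0$ still contradicts $P\prec q$ because the half-plane $q(\mathbb{D})$ is open.
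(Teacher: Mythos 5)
Your proof is correct and follows essentially the same route as the paper's: the contradiction via Lemma~2.2d of Miller--Mocanu, the splitting of $\psi_0$ into $q(\zeta_0)Q(z_0)$ plus the negative-real multiple of $1/(\beta q(\zeta_0)+\alpha)$, and the typical-reality sign chain forced by $Q'(0)>0$ and $p'(0)>0$. If anything you are slightly more careful than the paper at the boundary case $a=1$ (where the paper's final strict inequality degenerates and one must invoke the openness of the half-plane $q(\mathbb{D})$, as you do).
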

\begin{proof}
	We need to show that $p(z)\prec q(z)=(1+az)/(1-z).$ For if $p\nprec q$, then using~\cite[Lemma~2.2d, pp.24]{ds}, there exists a point $z_0=r_0e^{i\theta_0}\in\mathbb{D},$ $\zeta_0\in\partial\mathbb{D}\backslash\{1\}$ and $m\geq 1$ such that $p(z_0)=q(\zeta_0),$ $z_0p'(z_0)=m\zeta_0q'(\zeta_0)$ and $p(\mathbb{D}_{r_0})\subset q(\mathbb{D}).$ Since $\zeta_0$ is a boundary point, we may assume that $\zeta_0=e^{i\theta}$ for $\theta\in[0,2\pi].$ Then
	\begin{eqnarray*}
		p(z_0)Q(z_0)+\dfrac{z_0p'(z_0)}{\beta p(z_0)+\alpha}&=&q(\zeta_0)Q(z_0)+\dfrac{m\zeta_0 q'(\zeta_0)}{\beta q(\zeta_0)+\alpha}\\
		&=&\left(\dfrac{1-a}{2}+i\dfrac{1+a}{2}\cot{\theta/2}\right)Q(z_0)\\
		& &-\dfrac{m(a+1)e^{i\theta}}{(1-e^{i\theta})((\beta+\alpha)+(\beta a-\alpha)e^{i\theta})}.
	\end{eqnarray*}
	Taking $Q(z_0)=u+iv,$ we have
	{\small
		\begin{eqnarray*}
			\RE\left( p(z_0)Q(z_0)+\dfrac{z_0p'(z_0)}{\beta p(z_0)+\alpha}\right)&=&\dfrac{(1-a)u}{2}-\dfrac{(1+a)\cot{(\theta/2)}v}{2}\\
			& &-\dfrac{m(1+a)(\beta(1-a)+2\alpha)(1-\cos{\theta})}{(\beta(1-a)+2\alpha)^2(1-\cos{\theta})^2+(\beta(a+1)\sin{\theta})^2}.
	\end{eqnarray*}}
	Since $Q'(0)>0$, it is clear that $Q(z)$ is typically real and thus
	\begin{equation}\label{03}
		sign(v)=sign(\IM(Q(z_0)))=sign(\IM(z_0)).
	\end{equation}
	We also have $p'(0)>0$ and $q(z)=(1+az)/(1-z)$ is typically real, which implies
	{\small
		\begin{equation}\label{04}
			sign(\IM(z_0))=sign((\IM(p(z_0))))=sign((\IM(q(\zeta_0))))=sign(\IM(\zeta_0))=sign(\sin{\theta}).
	\end{equation}}
	From~\eqref{03} and~\eqref{04}, we obtain $sign(v)=sign(\sin{\theta}),$ which is sufficient to conclude that $v\cot{\theta/2}>0$ for $\theta\in[0,2\pi].$ Also we know that $\alpha\geq 0,\;\beta>0,\;m\geq 1$ and $0\leq a\leq 1.$ Therefore
	\begin{equation*}
		\RE\left( p(z_0)Q(z_0)+\dfrac{z_0p'(z_0)}{\beta p(z_0)+\alpha}\right)<\dfrac{(1-a)u}{2}<\dfrac{1-a}{2},
	\end{equation*}
	which contradicts ~\eqref{23} and hence the result.
\end{proof}
\section{Other Subordination Results and Applications}
We begin with the following analogue of open door lemma:
\begin{theorem}\label{odlbb}
	Let $p(z)$ be analytic in $\mathbb{D}$ with $p(0)=1$ and $Q(z)\in\mathcal{P}.$ Suppose that $\alpha\geq 0,\beta>0$ and $p$ satisfies
	\begin{equation}\label{hypo}
		p(z)Q(z)+\dfrac{zp'(z)}{\beta p(z)+\alpha}=1,
	\end{equation}
	then $\RE p(z)>0.$
\end{theorem}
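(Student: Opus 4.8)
The plan is to argue by contradiction using the fundamental lemma of Miller and Mocanu (the same \cite[Lemma~2.2d, pp.~24]{ds} exploited above), taking as dominant the function $q_0(z)=(1+z)/(1-z)$, which maps $\mathbb{D}$ univalently onto the right half-plane. Since $p(0)=1=q_0(0)$, the desired conclusion $\RE p(z)>0$ in $\mathbb{D}$ is exactly the subordination $p\prec q_0$. So I would assume $p\nprec q_0$ and produce a contradiction with \eqref{hypo}. Under this assumption, the lemma furnishes points $z_0\in\mathbb{D}$, $\zeta_0\in\partial\mathbb{D}\setminus\{1\}$ and a number $m\ge 1$ with $p(z_0)=q_0(\zeta_0)$ and $z_0p'(z_0)=m\,\zeta_0 q_0'(\zeta_0)$.

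Next I would turn this into a scalar identity. Writing $\zeta_0=e^{i\theta_0}$ and using the two elementary computations $q_0(e^{i\theta_0})=i\cot(\theta_0/2)$ and $\zeta_0 q_0'(\zeta_0)=2\zeta_0/(1-\zeta_0)^2=-\tfrac12\bigl(1+\cot^2(\theta_0/2)\bigr)$, one obtains $p(z_0)=i\rho$ and $z_0p'(z_0)=-\tfrac{m}{2}(1+\rho^2)$, where $\rho:=\cot(\theta_0/2)\in\mathbb{R}$. Substituting these two values into \eqref{hypo} evaluated at $z_0$ collapses the whole hypothesis to the single equation
\begin{equation*}
	i\rho\,Q(z_0)+\frac{-\tfrac{m}{2}(1+\rho^2)}{\beta i\rho+\alpha}=1. \tag{$\star$}
\end{equation*}

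To finish, I would extract the contradiction from the imaginary part of $(\star)$. Put $Q(z_0)=u+iv$; since $Q\in\mathcal{P}$ and $z_0\in\mathbb{D}$ we have $u=\RE Q(z_0)>0$. First suppose $\rho\ne0$; then $\beta i\rho+\alpha\ne 0$ because $\beta>0$, and comparing imaginary parts in $(\star)$ yields, after a brief simplification,
\begin{equation*}
	\rho\left(u+\frac{m\beta(1+\rho^2)}{2(\alpha^2+\beta^2\rho^2)}\right)=0,
\end{equation*}
whose bracketed factor is strictly positive (as $m\ge 1$, $\beta>0$, $\alpha\ge0$), contradicting $\rho\ne0$. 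Hence $\rho=0$, i.e. $p(z_0)=0$ and $z_0p'(z_0)=-m/2\ne0$. If $\alpha>0$, then $(\star)$ reads $-m/(2\alpha)=1$, which is absurd; if $\alpha=0$, then the quotient term $z_0p'(z_0)/(\beta p(z_0)+\alpha)$ in \eqref{hypo} is undefined at $z_0$ (denominator vanishing, numerator not), contradicting that the left-hand side of \eqref{hypo} equals the constant $1$. In every case we reach a contradiction, so $p\prec q_0$, that is, $\RE p(z)>0$ on $\mathbb{D}$.

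All the computations here---the two identities for $q_0$ and $\zeta_0q_0'$, and the simplification of $\IM(\star)$---are routine, so there is no serious analytic obstacle. The only point requiring a little attention is the degenerate configuration $\rho=0$ (equivalently $p(z_0)=0$): here, when $\alpha=0$, one cannot plug into $(\star)$ and must instead argue directly from \eqref{hypo}, and it is precisely in disposing of this corner that the hypotheses $\alpha\ge0$ and $\beta>0$ are genuinely used.
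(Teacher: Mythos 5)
Your proof is correct and follows essentially the same route as the paper: the paper invokes the packaged admissibility theorem \cite[Theorem 2.3i]{ds} for $\Omega=\{1\}$ and the half-plane dominant $(1+z)/(1-z)$, which reduces to exactly your scalar identity $(\star)$ with $p(z_0)=i\rho$ and $z_0p'(z_0)=\sigma\le-\tfrac{1}{2}(1+\rho^2)$, and it extracts the same contradiction from the imaginary part using $\RE Q(z_0)>0$. Your explicit handling of the degenerate case $\rho=0$ (in particular when $\alpha=0$, where the quotient in \eqref{hypo} would have a pole) is, if anything, more careful than the paper's one-line assertion that $\rho_0\neq 0$.
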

\begin{proof}
	Define the analytic function $g$ by
	$$g(z)=z\exp{\int_0^z\dfrac{Q(t)-1}{t}}dt.$$
	Then one can verify that the function $p(z),$ given by
	\begin{equation}\label{pz}
		p(z)=\dfrac{g^{\alpha}(z)z^{\beta}}{\beta}\left(\int_0^z g^{\alpha-1}(t)g'(t)t^{\beta}\right)^{-1}-\dfrac{\alpha}{\beta}
	\end{equation}
	is analytic in $\mathbb{D}$ and $p\in\mathcal{H}[1,n].$ The logarithmic differentiation of~\eqref{pz} reveals that $p$ is a solution of the differential equation~\eqref{hypo}. We now use~\cite[Theorem 2.3i, pp.35]{ds} to prove that $\RE p(z)>0$. Let $\Omega=\{1\}$ and $\psi(r,s;z)=rQ(z)+s/(\beta r+\alpha),$ then~\eqref{hypo} can be written as
	$$\{\psi(p(z),zp'(z);z)|z\in\mathbb{D}\}\subset\Omega.$$
	In view of~\cite[Theorem 2.3i, pp.35]{ds}, it is sufficient to show that $\psi\in\Psi_n[\Omega,1],$ which means admissibility conditions defined for the class $\Psi_n[\Omega,1]$ are satisfied by $\psi$ or equivalently:
	\begin{equation}\label{1}
		\psi(\rho i,\sigma;z)=\dfrac{\sigma}{\beta\rho i+\alpha}+Q(z)\rho i\neq 1,
	\end{equation}
	where $\rho\in\mathbb{R},\;\sigma\leq-\dfrac{n}{2}(1+\rho^2),\;z\in\mathbb{D}$ and $n\geq1.$
	Suppose on the contrary if we assume~\eqref{1} is false, then there exists some values $\rho_0,\;\sigma_0$ and $z_0$ such that \begin{equation}\label{2}
		\dfrac{\sigma_0}{\beta\rho_0 i+\alpha}+i\rho_0Q(z_0)=1.
	\end{equation}
	If we let $Q(z)=S(z)+iT(z),$ then~\eqref{2} yields
	\begin{equation}\label{3}
		\dfrac{\sigma_0\alpha}{\alpha^2+\beta^2\rho_0^2}-\rho_0T(z_0)=1\quad\text{and}\quad-\dfrac{\beta\sigma_0\rho_0}{\alpha^2+\beta^2\rho_0^2}+\rho_0S(z_0)=0.
	\end{equation}
	Since $\sigma_0<0$ and $\alpha\geq0,$ we have $\rho_0\neq 0.$ Therefore from~\eqref{3}, we deduce that
	$$\RE Q(z_0)=S(z_0)=\dfrac{\beta\sigma_0}{\alpha^2+\beta^2\rho_0^2}\leq-\dfrac{\beta n(1+\rho_0^2)}{2(\alpha^2+\beta^2\rho_0^2)}<0,$$
	which contradicts the hypothesis and hence it follows that $\RE p(z)>0.$
\end{proof}
\begin{corollary}
	Let $f\in\mathcal{A}$ be such that $f(z)/z\neq 0$ in $\mathbb{D}$ and $f\in\mathcal{R}.$ Then $\RE f(z)/z>0.$
\end{corollary}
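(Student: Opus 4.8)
The plan is to deduce this as an application of the open door lemma analogue, Theorem~\ref{odlbb}, applied with the specific data $Q(z)=f'(z)$, $\beta=1$ and $\alpha=0$. The guiding observation is that the membership $f\in\mathcal{R}$ is \emph{exactly} the statement that $f'\in\mathcal{P}$, which supplies the Carath\'eodory function $Q$ needed to run Theorem~\ref{odlbb}; and that for these particular parameters the solution $p$ of~\eqref{hypo} coincides with $z/f(z)$, whose positivity is what we want.

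First I would set $p(z):=z/f(z)$. Since $f\in\mathcal{A}$ and $f(z)/z\neq0$ on $\mathbb{D}$, this $p$ is analytic and non-vanishing on $\mathbb{D}$, with $p(0)=1$ and $p\in\mathcal{H}[1,1]$. Next I would verify that $p$ satisfies~\eqref{hypo} for the above data. Logarithmic differentiation of $p=z/f$ gives $zp'(z)/p(z)=1-zf'(z)/f(z)$, hence
\[
	p(z)Q(z)+\frac{zp'(z)}{\beta p(z)+\alpha}=\frac{z}{f(z)}\,f'(z)+\frac{zp'(z)}{p(z)}=\frac{zf'(z)}{f(z)}+1-\frac{zf'(z)}{f(z)}=1,
\]
which is precisely~\eqref{hypo} with $Q=f'$, $\alpha=0$, $\beta=1$. (As a consistency check, formula~\eqref{pz} specialises to $p=z/f$ for this data, since $g'(z)/g(z)=Q(z)/z$ forces $\int_0^z g^{\alpha-1}(t)g'(t)t^{\beta}\,dt=\int_0^z f'(t)\,dt=f(z)$.)

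Then I would invoke Theorem~\ref{odlbb}: its hypotheses hold because $Q=f'\in\mathcal{P}$ (as $f\in\mathcal{R}$, so $\RE f'>0$ and $f'(0)=1$), $\alpha=0\geq0$ and $\beta=1>0$. The conclusion is $\RE p(z)>0$, i.e.\ $\RE\bigl(z/f(z)\bigr)>0$ on $\mathbb{D}$; since $f(z)/z$ is the reciprocal of the non-vanishing function $z/f(z)$ and $\RE(1/w)=\RE(w)/|w|^2$ has the same sign as $\RE(w)$, it follows that $\RE\bigl(f(z)/z\bigr)>0$, as required. The argument has no serious obstacle: its whole content is recognising that taking $Q=f'$ converts the hypothesis $f\in\mathcal{R}$ into the admissibility input $Q\in\mathcal{P}$ of Theorem~\ref{odlbb}, and that the choice $(\alpha,\beta)=(0,1)$ pins the associated solution of~\eqref{hypo} down to $z/f(z)$; once this dictionary is in place, every remaining step is a one-line verification.
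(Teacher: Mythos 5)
Your proposal is correct and is essentially the same as the paper's own proof: the paper likewise sets $Q(z)=f'(z)$, $p(z)=z/f(z)$, $\beta=1$, $\alpha=0$, checks that $p$ satisfies~\eqref{hypo}, and applies Theorem~\ref{odlbb}. The only difference is that you spell out the verification of~\eqref{hypo} and the final reciprocal step $\RE(z/f(z))>0\Rightarrow\RE(f(z)/z)>0$, which the paper leaves implicit.
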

\begin{proof}
	Let $Q(z)=f'(z)$ then $Q\in\mathcal{P}$ as $f\in\mathcal{R}.$ Choose $p(z)=z/f(z),  \beta=1$ and $\alpha=0$, then clearly $p(0)=1$ and $p(z)$  satisfies~\eqref{hypo}. Now by an application of Theorem~\ref{odlbb}, it follows that $\RE p(z)>0$ and hence the result.
\end{proof}

Here below we consider an analogue of integral existence theorem:
\begin{theorem}
	Let $\varphi,\phi\in\mathcal{H}[1,n],$ with $\varphi(z)\phi(z)\neq 0$ in $\mathbb{D}.$ Let $\lambda,\eta,\gamma$ and $\delta$ be complex numbers with $\eta\neq 0,$ $\lambda+\delta=\eta+\gamma=1$ and $\alpha,\beta$ be non negative real numbers with $\beta\neq 0$. Let $g\in\mathcal{A}_n$ and suppose that
	\begin{equation}\label{Q}
		Q(z)=\lambda\dfrac{zg'(z)}{g(z)}+\dfrac{z\varphi'(z)}{\varphi(z)}+\delta\prec\dfrac{1+z}{1-z}.
	\end{equation}
	If $F$ is defined by
	\begin{equation}\label{F}
		F(z)=\left(\dfrac{\alpha+\beta}{\phi^{\beta}(z)g^{\lambda\alpha}(z)\varphi^{\alpha}(z)z^{\delta\alpha+\beta\gamma}}\int_0^z g^{\lambda\alpha}(t)\varphi^{\alpha}(t)t^{\beta+\delta\alpha-1}Q(t)dt\right)^{\tfrac{1}{\eta\beta}},
	\end{equation}
	then $F\in\mathcal{A}_n,F(z)/z\neq0$ and for $z\in\mathbb{D},$
	\begin{equation}\label{conc}
		\RE\left(\dfrac{\eta\dfrac{zF'(z)}{F(z)}+\dfrac{z\phi'(z)}{\phi(z)}+\gamma}{\lambda\dfrac{zg'(z)}{g(z)}+\dfrac{z\varphi'(z)}{\varphi(z)}+\delta}\right)>0.
	\end{equation}
\end{theorem}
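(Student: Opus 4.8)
The plan is to reduce the statement to the open‑door analogue, Theorem~\ref{odlbb}. Abbreviate $Q(z)=\lambda\,zg'(z)/g(z)+z\varphi'(z)/\varphi(z)+\delta$, the function in~\eqref{Q}, and put
\[
p(z):=\frac{\eta\,\dfrac{zF'(z)}{F(z)}+\dfrac{z\phi'(z)}{\phi(z)}+\gamma}{Q(z)} ,
\]
so that~\eqref{conc} is precisely the assertion $\RE p(z)>0$. The hypothesis $Q\prec(1+z)/(1-z)$ forces $Q(0)=1$ and $\RE Q(z)>0$ on $\mathbb{D}$, i.e. $Q\in\mathcal{P}$; in particular $Q$ is zero‑free, so $p$ is analytic wherever $F(z)/z$ is analytic and non‑vanishing. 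Hence it suffices to check (a) that $F\in\mathcal{A}_n$ with $F(z)/z\neq 0$ in $\mathbb{D}$, so that $p\in\mathcal{H}[1,n]$ with $p(0)=1$, and (b) that $p$ solves $p(z)Q(z)+zp'(z)/(\beta p(z)+\alpha)=1$. Granting (a) and (b), Theorem~\ref{odlbb} (with $\alpha\ge 0$, $\beta>0$, $Q\in\mathcal{P}$) gives $\RE p(z)>0$, which is~\eqref{conc}.

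Step (b), the identification of the differential equation, is carried out by two rounds of logarithmic differentiation of~\eqref{F}. Writing $G(z)=\phi^{\beta}(z)g^{\lambda\alpha}(z)\varphi^{\alpha}(z)z^{\delta\alpha+\beta\gamma}$ and $H(z)=\int_0^z g^{\lambda\alpha}(t)\varphi^{\alpha}(t)t^{\beta+\delta\alpha-1}Q(t)\,dt$, equation~\eqref{F} says $F^{\eta\beta}(z)G(z)=(\alpha+\beta)H(z)$; differentiating logarithmically and using $\lambda+\delta=1$ (which collapses $\lambda\alpha\,zg'/g+\alpha\,z\varphi'/\varphi+\delta\alpha$ to $\alpha Q$) and $\eta+\gamma=1$ one gets $zH'(z)/H(z)=(\beta p(z)+\alpha)Q(z)$. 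Differentiating logarithmically a second time, now comparing the expansion of $z(zH'/H)'/(zH'/H)$ obtained from $H'(z)=g^{\lambda\alpha}(z)\varphi^{\alpha}(z)z^{\beta+\delta\alpha-1}Q(z)$ with the one obtained from $zH'/H=(\beta p+\alpha)Q$, the terms $zQ'(z)/Q(z)$ cancel and what remains simplifies to $\beta\,zp'(z)/(\beta p(z)+\alpha)=\beta\bigl(1-p(z)Q(z)\bigr)$, which is the asserted equation. This is a long but entirely mechanical computation; the two normalisations $\lambda+\delta=\eta+\gamma=1$ are exactly what make the operator $\alpha Q$ and the power $z^{\eta\beta}$ emerge in the right places.

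For step (a), inspecting the integrand of~\eqref{F} near $z=0$: $g\in\mathcal{A}_n$ gives $g(t)/t=1+O(t^{n})$, $\varphi,\phi\in\mathcal{H}[1,n]$ give $\varphi^{\alpha},\phi^{\beta}=1+O(z^{n})$, and then $Q=\lambda\,zg'/g+z\varphi'/\varphi+\delta=1+O(z^{n})$, so the integrand equals $t^{\alpha+\beta-1}(1+O(t^{n}))$ and $F^{\eta\beta}(z)=z^{\eta\beta}(1+O(z^{n}))$; choosing the branch of the $\eta\beta$‑th root that is $1$ at the origin shows $F\in\mathcal{A}_n$ and $F(z)/z\to 1$. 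The remaining point — that $F(z)/z$, equivalently $H(z)/z^{\alpha+\beta}$ (the other factors of $(F/z)^{\eta\beta}$ being exponentials), is non‑vanishing throughout $\mathbb{D}$, whence $p\in\mathcal{H}[1,n]$ — is established as for the explicit solution~\eqref{pz} in the proof of Theorem~\ref{odlbb}. I expect this regularity step, rather than the differentiation bookkeeping, to be the main obstacle: one must know that the formula~\eqref{F} actually produces a genuine element of $\mathcal{A}_n$ with $F/z$ zero‑free before Theorem~\ref{odlbb} may be invoked. (When $\alpha=0$ this is transparent, since $\beta z^{-\beta}\int_0^z t^{\beta-1}Q(t)\,dt$ inherits positive real part — hence non‑vanishing — from $Q$.)
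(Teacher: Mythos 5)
Your reduction to Theorem~\ref{odlbb} is the right idea and your computations are correct: the quantity you call $p$ is exactly the paper's auxiliary function, your first logarithmic differentiation giving $zH'(z)/H(z)=(\beta p(z)+\alpha)Q(z)$ is equivalent to the paper's formula~\eqref{p} for $\beta p+\alpha$, and the second differentiation does produce $p(z)Q(z)+zp'(z)/(\beta p(z)+\alpha)=1$. The difference is purely one of logical order, but it is a difference that matters. You define $p$ in terms of $F$, so before you may invoke Theorem~\ref{odlbb} you must already know that $F\in\mathcal{A}_n$ with $F(z)/z\neq 0$ throughout $\mathbb{D}$ (equivalently that $H(z)/z^{\alpha+\beta}$ is zero-free), and this is precisely the step you leave open, deferring it to ``as for~\eqref{pz} in the proof of Theorem~\ref{odlbb}.'' That reference does not supply an argument --- there the analyticity of the integral-defined function is simply asserted --- and your local expansion at the origin only controls a neighbourhood of $0$, not the whole disk. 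As written, the route is close to circular: the global non-vanishing of $F/z$ is not an input one can check independently; it is the payoff of the open-door machinery.

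The paper closes this gap by reversing the order. It defines $p$ directly by the integral formula~\eqref{p} (your relation $\beta p+\alpha=g^{\lambda\alpha}\varphi^{\alpha}z^{\beta+\delta\alpha}/H$), verifies the same differential equation~\eqref{hypo}, and applies Theorem~\ref{odlbb} to conclude $\RE p(z)>0$. Only then does it substitute back to write $F(z)=z\bigl((\alpha+\beta)/(\phi^{\beta}(z)(\beta p(z)+\alpha))\bigr)^{1/(\eta\beta)}$; since $\RE(\beta p+\alpha)>0$ the bracketed expression is analytic and zero-free, so $F\in\mathcal{A}_n$ and $F(z)/z\neq 0$ come out as \emph{conclusions}, and the identity $\eta zF'/F+z\phi'/\phi+\gamma=p\,Q$ then yields~\eqref{conc}. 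So your proposal needs only this reordering (define $p$ by the integral first, derive $F$'s regularity afterwards) to become the paper's proof; without it, the regularity step you correctly flagged as the main obstacle remains unproved.
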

\begin{proof}
	Let $p(z)$ be given by
	\begin{equation}\label{p}
		p(z)=\dfrac{1}{\beta}g^{\lambda\alpha}(z)\varphi^{\alpha}(z)z^{\beta+\delta\alpha}\left(\int_0^z g^{\lambda\alpha}(t)\varphi^{\alpha}(t)t^{\beta+\delta\alpha-1}Q(t)dt\right)^{-1}-\dfrac{\alpha}{\beta}.
	\end{equation}
	Then $$p(z)=1+p_nz^n+p_{n+1}z^{n+1}+\cdots $$
	is analytic in $\mathbb{D}$ and $p\in\mathcal{H}[1,n].$ The logarithmic differentiation of~\eqref{p} shows that $p(z)$ satisfies~\eqref{hypo} with $Q(z)$ as given in~\eqref{Q}. From hypothesis, we have $Q(z)\prec(1+z)/(1-z).$ Thus the hypothesis of the Theorem.~\ref{odlbb} is fulfilled by $p$ as well as $Q$ and hence it follows that $\RE p(z)>0.$ Using~\eqref{F} and~\eqref{p}, we get
	\begin{equation}\label{55}
		F(z)=\left(\dfrac{(\alpha+\beta)z^{\beta(1-\gamma)}}{\phi^{\beta}(z)(\beta p(z)+\alpha)}\right)^{\tfrac{1}{\eta\beta}}=z\left(\dfrac{\alpha+\beta}{\phi^{\beta}(z)(\beta p(z)+\alpha)}\right)^{\tfrac{1}{\eta\beta}}.
	\end{equation}
	Clearly the expression in the bracket is analytic and non-zero, so we deduce that $F\in\mathcal{A}_n$ and $F(z)/z\neq 0.$ By differentiating~\eqref{55} logarithmically, we obtain
	$$\eta\dfrac{zF'(z)}{F(z)}=1-\gamma-\dfrac{z\phi'(z)}{\phi(z)}-\dfrac{zp'(z)}{\beta p(z)+\gamma},$$
	which by using~\eqref{hypo} simplifies to
	$$\eta\dfrac{zF'(z)}{F(z)}+\dfrac{z\phi'(z)}{\phi(z)}+\gamma=p(z)Q(z).$$
	Therefore
	$$\RE\dfrac{1}{Q(z)}\left(\eta\dfrac{zF'(z)}{F(z)}+\dfrac{z\phi'(z)}{\phi(z)}+\gamma\right)>0,$$ which is equivalent to~\eqref{conc} and hence completes the proof.
\end{proof}

We now obtain the following special case of the above theorem when  $\delta=\gamma=0,$ $\lambda=\eta=1$ and $\varphi(z)=1=\phi(z).$
\begin{corollary}
	Let $g\in\mathcal{S}^*,$ $\alpha\geq0,\beta>0$ and $F$ is defined as
	\begin{equation*}
		F(z)=\dfrac{\alpha+\beta}{g^{\alpha}(z)}\int_0^z g'(t)g^{\alpha-1}(t)t^{\beta}dt,
	\end{equation*}
	then $F$ is almost strongly starlike with respect to $g$ and hence univalent.
\end{corollary}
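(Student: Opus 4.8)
The plan is to obtain this corollary as an immediate specialization of the preceding integral existence theorem. First I would set $\delta=\gamma=0$, $\lambda=\eta=1$, and $\varphi(z)=\phi(z)\equiv 1$, and check that every hypothesis of that theorem is met: the constant $1$ lies in $\mathcal{H}[1,n]$ and is nonvanishing; the relations $\lambda+\delta=\eta+\gamma=1$ and $\eta\neq 0$ hold; $\alpha\geq 0$ and $\beta>0$; and $g\in\mathcal{S}^*\subset\mathcal{A}_1$, so one may take $n=1$. With these choices the auxiliary function is $Q(z)=\lambda zg'(z)/g(z)+z\varphi'(z)/\varphi(z)+\delta=zg'(z)/g(z)$, which, since $g$ is starlike, belongs to $\mathcal{P}$ with $Q(0)=1$; hence $Q(z)\prec(1+z)/(1-z)$, which is precisely the subordination hypothesis required by the theorem.

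Next I would carry out the (routine) simplification of the integral operator. Inserting $\varphi\equiv\phi\equiv 1$, $\lambda=\eta=1$, $\delta=\gamma=0$ into~\eqref{F}, the integrand $g^{\lambda\alpha}(t)\varphi^{\alpha}(t)t^{\beta+\delta\alpha-1}Q(t)$ collapses, upon substituting $Q(t)=tg'(t)/g(t)$, to $g^{\alpha}(t)t^{\beta-1}\cdot tg'(t)/g(t)=g^{\alpha-1}(t)g'(t)t^{\beta}$, while the prefactor becomes $(\alpha+\beta)/g^{\alpha}(z)$ and the exponent $1/(\eta\beta)$ becomes $1/\beta$; thus~\eqref{F} reduces to the operator $F$ displayed in the statement. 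The theorem then already guarantees $F\in\mathcal{A}_n$ and $F(z)/z\neq 0$ in $\mathbb{D}$.

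Finally I would read off the conclusion~\eqref{conc} under the same substitutions: its numerator becomes $zF'(z)/F(z)$ and its denominator becomes $zg'(z)/g(z)$, so~\eqref{conc} reads
$$\RE\left(\frac{g(z)F'(z)}{g'(z)F(z)}\right)>0\qquad(z\in\mathbb{D}).$$
Since the expression on the left is analytic in $\mathbb{D}$ and takes the value $1$ at the origin (because $g,F\in\mathcal{A}$), it lies in $\mathcal{P}$ and is therefore subordinate to $(1+z)/(1-z)=\bigl((1+z)/(1-z)\bigr)^{1}$. By Coman's definition this says $F$ is almost strongly starlike of order $1$ with respect to $g\in\mathcal{S}^*(1-1)=\mathcal{S}^*$, and by Coman's result recalled in the Introduction every such function is starlike, hence univalent.

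I do not expect a genuine obstacle here; the work is bookkeeping. The only step needing care is the algebraic collapse of the integrand in~\eqref{F} --- in particular tracking the exponents $\lambda\alpha$ and $\beta+\delta\alpha-1$ so that, after inserting $Q(t)=tg'(t)/g(t)$, one recovers exactly $g^{\alpha-1}(t)g'(t)t^{\beta}$ --- together with the cosmetic reconciliation of the exponent $1/(\eta\beta)=1/\beta$ with the form written in the corollary. One should also note explicitly that the \emph{order} appearing in the conclusion equals $1$, which is consistent with the hypothesis $g\in\mathcal{S}^*$ (rather than a proper subclass), so that Coman's framework applies verbatim.
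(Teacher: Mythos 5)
Your proposal is correct and is exactly the route the paper intends: the corollary is stated as the specialization $\delta=\gamma=0$, $\lambda=\eta=1$, $\varphi=\phi\equiv 1$ of the integral existence theorem, with $Q(z)=zg'(z)/g(z)\prec(1+z)/(1-z)$ supplied by $g\in\mathcal{S}^*$ and the conclusion $\RE\bigl(g(z)F'(z)/(g'(z)F(z))\bigr)>0$ read off from~\eqref{conc} and matched to Coman's definition of almost strong starlikeness of order $1$. Your careful bookkeeping in fact shows the operator should carry the outer exponent $1/\beta$ (as in~\eqref{24}); its absence in the corollary's display is evidently a typographical slip in the paper.
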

\begin{theorem}
	Let $f,g\in\mathcal{A}_n$ and $\phi\in\mathcal{H}[1,n]$ with $\phi(0)=1$ and $\phi(z)\neq 0$ in $\mathbb{D}.$ Let $\beta,\alpha$ and $\sigma$ be complex numbers such that $\beta\neq0$ and $\RE(\beta h(z)+\alpha)>0,$ where $h$ is a convex function with $h(0)=1$ in $\mathbb{D}.$ Suppose
	\begin{equation}\label{4}
		\beta\dfrac{zf'(z)}{f(z)}+\sigma\dfrac{zg'(z)}{g(z)}\prec \beta h(z)+\sigma.
	\end{equation}
	If $F$ is defined by
	\begin{equation}\label{5}
		F(z)=\left(\dfrac{\beta+\alpha}{z^{\alpha}\phi(z)}\int_0^zf^{\beta}(t)g^{\sigma}(t)t^{\alpha-\sigma-1}dt\right)^{\tfrac{1}{\beta}},
	\end{equation}
	then $F(z)\in\mathcal{A}_n$ and
	\begin{equation}\label{6}
		\dfrac{zF'(z)}{F(z)}+\dfrac{1}{\beta}\dfrac{z\phi'(z)}{\phi(z)}\prec h(z).
	\end{equation}
\end{theorem}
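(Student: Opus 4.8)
The plan is to reduce the subordination \eqref{6} to the ordinary Briot--Bouquet implication, that is, to Theorem~\ref{bbgen} with $Q\equiv 1$. First I would rewrite the definition \eqref{5} in the pole-free form
\[
z^{\alpha}\phi(z)F^{\beta}(z)=(\beta+\alpha)\,G(z),\qquad
G(z):=\int_0^z f^{\beta}(t)g^{\sigma}(t)t^{\alpha-\sigma-1}\,dt .
\]
Since $h(0)=1$ forces $\RE(\alpha+\beta)>0$, and since \eqref{4} presupposes that $f(z)/z$ and $g(z)/z$ are analytic and non-vanishing in $\mathbb{D}$ (so that $f^{\beta},g^{\sigma}$ are well defined and the integrand equals $(f(t)/t)^{\beta}(g(t)/t)^{\sigma}t^{\alpha+\beta-1}$), the function $G$ is analytic in $\mathbb{D}$ with $G(z)\sim z^{\alpha+\beta}/(\alpha+\beta)$ near the origin. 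I would then set
\[
p(z):=\frac{1}{\beta}\left(\frac{zG'(z)}{G(z)}-\alpha\right),
\]
so that $p(0)=1$; exactly as in the proof of Theorem~\ref{odlbb} and of the integral existence theorem above, $p$ is analytic in $\mathbb{D}$ and $p\in\mathcal{H}[1,n]$.

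The next step is a pair of logarithmic differentiations. Differentiating the displayed relation for $F$ once gives $\alpha+\dfrac{z\phi'(z)}{\phi(z)}+\beta\dfrac{zF'(z)}{F(z)}=\dfrac{zG'(z)}{G(z)}=\beta p(z)+\alpha$, hence
\[
\frac{zF'(z)}{F(z)}+\frac{1}{\beta}\frac{z\phi'(z)}{\phi(z)}=p(z);
\]
moreover, integrating $zG'/G=\beta p+\alpha$ yields $G(z)=\tfrac{z^{\alpha+\beta}}{\alpha+\beta}\exp\!\big(\beta\!\int_0^z\tfrac{p(t)-1}{t}\,dt\big)$, so that $F(z)=z\,\phi(z)^{-1/\beta}\exp\!\big(\!\int_0^z\tfrac{p(t)-1}{t}\,dt\big)$ equals $z$ times an analytic, non-vanishing function lying in $\mathcal{H}[1,n]$; this gives $F\in\mathcal{A}_n$ and $F(z)/z\neq 0$. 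A second logarithmic differentiation of $G$, using $zG''/G'=\beta\,zf'/f+\sigma\,zg'/g+\alpha-\sigma-1$, combined with $zG'/G=\beta p+\alpha$ and division by $\beta p+\alpha$, produces the identity
\[
p(z)+\frac{zp'(z)}{\beta p(z)+\alpha}=\frac{1}{\beta}\left(\beta\frac{zf'(z)}{f(z)}+\sigma\frac{zg'(z)}{g(z)}\right)-\frac{\sigma}{\beta}.
\]

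Now hypothesis \eqref{4} states that the parenthesised expression is subordinate to $\beta h+\sigma$; applying the affine automorphism $w\mapsto w/\beta-\sigma/\beta$, which preserves subordination, turns the last identity into
\[
p(z)+\frac{zp'(z)}{\beta p(z)+\alpha}\prec h(z),\qquad p(0)=1=h(0).
\]
Since $h$ is convex and $\RE(\beta h(z)+\alpha)>0$ forces $\RE\big(1/(\beta h(z)+\alpha)\big)>0$, both conditions $(i)$ and $(ii)$ of Theorem~\ref{bbgen} hold with $Q\equiv 1$ (condition $(ii)$ collapsing to $(i)$), so that theorem yields $p(z)\prec h(z)$. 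In view of the first-order relation of the previous paragraph this is precisely \eqref{6}, and $F\in\mathcal{A}_n$ with $F(z)/z\neq 0$ has already been established.

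The only step that I expect to need genuine care is the assertion that $p$ --- equivalently, that $G(z)/z^{\alpha+\beta}$ --- is analytic and non-vanishing on all of $\mathbb{D}$, which is what makes $p\in\mathcal{H}[1,n]$ and $F\in\mathcal{A}_n$ legitimate; this rests on $\RE(\alpha+\beta)>0$ (supplied by $h(0)=1$) together with the standing non-vanishing of $f(z)/z$ and $g(z)/z$ that is implicit in \eqref{4}. After that, the argument is just bookkeeping with logarithmic derivatives and a single affine reparametrisation of the subordination before appealing to Theorem~\ref{bbgen}.
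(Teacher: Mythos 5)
Your proposal is correct and follows essentially the same route as the paper: the auxiliary function $p(z)=\frac{1}{\beta}\big(zG'(z)/G(z)-\alpha\big)$ is exactly the paper's $p$ from its display \eqref{7}, the logarithmic differentiations yield the same Briot--Bouquet relation and the same identification $p=zF'/F+\frac{1}{\beta}z\phi'/\phi$, and invoking Theorem~\ref{bbgen} with $Q\equiv 1$ is the same as the paper's appeal to Miller--Mocanu's Theorem 3.2a (as the paper's own remark notes). Your added care about the analyticity and non-vanishing of $G(z)/z^{\alpha+\beta}$, using $\RE(\alpha+\beta)>0$, only makes explicit what the paper leaves implicit.
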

\begin{proof}
	Let $p(z)$ be given by
	\begin{equation}\label{7}
		p(z)=\dfrac{1}{\beta}f^{\beta}(z)g^{\sigma}(z)z^{\alpha-\sigma}\left(\int_0^z f^{\beta}(t)g^{\sigma}(t)t^{\alpha-\sigma-1}dt\right)^{-1}-\dfrac{\alpha}{\beta}.
	\end{equation}
	Then $p(z)$ is analytic in $\mathbb{D}$ and $p(0)=1.$ Upon logarithmic differentiation of~\eqref{7}, we deduce that $p(z)$ satisfies
	\begin{equation}\label{8}
		p(z)+\dfrac{zp'(z)}{\beta p(z)+\alpha}=\dfrac{zf'(z)}{f(z)}+\dfrac{\sigma}{\beta}\left(\dfrac{zg'(z)}{g(z)}-1\right).
	\end{equation}
	From~\eqref{4} and~\eqref{8}, it can be concluded that
	\begin{equation*}
		p(z)+\dfrac{zp'(z)}{\beta p(z)+\alpha}\prec h(z).
	\end{equation*}
	By applying ~\cite[Theorem 3.2a, pp.81]{ds}, we obtain $p(z)\prec h(z).$ Substituting~\eqref{5} in~\eqref{7}, we obtain
	\begin{equation*}
		p(z)=\dfrac{1}{\beta}\left(\dfrac{(\alpha+\beta)f^{\beta}(z)g^{\sigma}(z)}{F^{\beta}(z)\phi(z)z^{\sigma}}-\alpha\right).
	\end{equation*}
	Differentiating logarithmically the following equation
	\begin{equation*}
		\beta p(z)+\alpha=\dfrac{(\alpha+\beta)f^{\beta}(z)g^{\sigma}(z)}{F^{\beta}(z)\phi(z)z^{\sigma}}
	\end{equation*}
	and using~\eqref{8}, we obtain
	\begin{equation*}
		p(z)=\dfrac{zF'(z)}{F(z)}+\dfrac{1}{\beta}\dfrac{z\phi'(z)}{\phi(z)}
	\end{equation*}
	and thus~\eqref{6} follows.
\end{proof}

We now derive another result analogous to open door lemma as follows:
\begin{lemma}\label{lema1}
	Let n be a positive integer and $\alpha,\beta$ be non-negative real numbers with $\beta\neq 0$. Let $Q\in\mathcal{H}[1,n]$ satisfy
	\begin{equation}\label{11}
		Q(z)\prec 1+z+\dfrac{nz}{\beta+\alpha(1+z)}\equiv h(z).
	\end{equation}
	If $p\in\mathcal{H}[1,n]$ satisfies the differential equation
	\begin{equation}\label{12}
		p(z)Q(z)+\dfrac{zp'(z)}{\beta p(z)+\alpha}=1,
	\end{equation}
	then $p(z)\prec 1/(1+z).$
\end{lemma}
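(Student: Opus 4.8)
The idea is to treat~\eqref{12} as a first-order differential subordination with $\Omega=\{1\}$ and to verify the appropriate admissibility condition, in the spirit of the proof of Theorem~\ref{odlbb}, but now with the sharper target $q(z)=1/(1+z)$; note $q$ is univalent in $\mathbb D$ with $q(0)=1=p(0)$ and $q(\mathbb D)=\{w:\RE w>1/2\}$. Write $\psi(r,s;z)=rQ(z)+s/(\beta r+\alpha)$, so that~\eqref{12} says $\psi(p(z),zp'(z);z)=1$ for all $z\in\mathbb D$. Assume $p\not\prec q$ (the case $p\equiv q$ being trivial); by \cite[Lemma~2.2d, pp.~24]{ds} there are $z_0\in\mathbb D$, $\zeta_0\in\partial\mathbb D\setminus\{-1\}$ and $m\ge n$ with $p(z_0)=q(\zeta_0)$ and $z_0p'(z_0)=m\zeta_0q'(\zeta_0)$; here $\beta q(\zeta_0)+\alpha\ne0$ since $\RE q(\zeta_0)=\tfrac12$, so $\psi$ is defined at this boundary point. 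Using $q(\zeta)=1/(1+\zeta)$ and $\zeta q'(\zeta)=-\zeta/(1+\zeta)^2$, a direct computation gives
\[
1=\psi\bigl(q(\zeta_0),m\zeta_0q'(\zeta_0);z_0\bigr)=\frac{1}{1+\zeta_0}\left(Q(z_0)-\frac{m\zeta_0}{\beta+\alpha(1+\zeta_0)}\right),
\]
that is, $Q(z_0)=h_m(\zeta_0)$, where $h_m(z):=1+z+\dfrac{mz}{\beta+\alpha(1+z)}$ (so $h=h_n$). Since $Q\prec h_n$ by~\eqref{11} we have $Q(z_0)\in h_n(\mathbb D)$, so the proof reduces to showing that $h_m(\partial\mathbb D)\cap h_n(\mathbb D)=\emptyset$ for every $m\ge n$.

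To this end I would first record that each $h_m$ is univalent on $\overline{\mathbb D}$. Indeed $\beta+\alpha(1+z)=(\beta+\alpha)+\alpha z$ has real part $\ge\beta>0$ on $\overline{\mathbb D}$, so $h_m$ is analytic there, and a short calculation yields
\[
h_m(z_1)-h_m(z_2)=(z_1-z_2)\left(1+\frac{m(\beta+\alpha)}{\bigl(\beta+\alpha(1+z_1)\bigr)\bigl(\beta+\alpha(1+z_2)\bigr)}\right);
\]
each factor in the denominator lies in the open right half-plane, hence has argument of modulus $<\tfrac\pi2$, so their product is never a negative real number and the bracketed factor never vanishes. Thus $h_m$ is injective on $\overline{\mathbb D}$ and $h_m(\partial\mathbb D)=\partial\bigl(h_m(\mathbb D)\bigr)$. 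Consequently it suffices to prove the \emph{nesting}
\[
h_n(\mathbb D)\subseteq h_m(\mathbb D)\qquad(m\ge n):
\]
granting this, $h_m(\partial\mathbb D)=\partial(h_m(\mathbb D))$ is disjoint from the open set $h_m(\mathbb D)\supseteq h_n(\mathbb D)$, which yields the required contradiction and hence $p\prec q=1/(1+z)$.

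The nesting is the one substantive step, and I would obtain it by exhibiting $L(z,t):=h_{n+t}(z)-1=z+\dfrac{(n+t)z}{\beta+\alpha(1+z)}$, $t\ge0$, as a subordination (Loewner) chain. One checks that $L'(0,t)=1+(n+t)/(\beta+\alpha)$ is strictly increasing and unbounded, that $\{L(\cdot,t)/L'(0,t)\}$ is a normal family, and that
\[
\frac{\partial L}{\partial t}(z,t)=z\,\frac{\partial L}{\partial z}(z,t)\cdot\frac{\beta+\alpha(1+z)}{\bigl(\beta+\alpha(1+z)\bigr)^2+(n+t)(\beta+\alpha)},
\]
where the last factor has positive real part throughout $\mathbb D$: writing $v=\beta+\alpha(1+z)$ (so $\RE v\ge\beta>0$) and $c=(n+t)(\beta+\alpha)>0$, it equals $v/(v^2+c)=1/(v+c/v)$, and $\RE(v+c/v)=\RE v\,(1+c/|v|^2)>0$. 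By the standard characterization of subordination chains (see \cite{ds}), $L(\cdot,s)\prec L(\cdot,t)$ whenever $0\le s\le t$, so $h_n(\mathbb D)\subseteq h_m(\mathbb D)$ for $m\ge n$, completing the argument. The main obstacle is precisely this monotonicity of the domains $h_m(\mathbb D)$; everything else is the routine admissibility computation already illustrated in Theorem~\ref{odlbb}. (As a sanity check, when $\alpha=0$ one has $h_m(z)=1+(1+m/\beta)z$, so $h_m(\mathbb D)$ is the disc about $1$ of radius $1+m/\beta$ and the nesting is immediate; alternatively one may argue geometrically that for $\zeta\in\partial\mathbb D$ the velocity $\partial_t h_{n+t}(\zeta)=\zeta/(\beta+\alpha(1+\zeta))$ points out of $h_{n+t}(\mathbb D)$ at $h_{n+t}(\zeta)$, since $\RE\{\partial_t h_{n+t}(\zeta)/(\zeta h_{n+t}'(\zeta))\}>0$, so the boundary curves sweep strictly outward.)
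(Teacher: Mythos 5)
Your proposal is correct, and its skeleton (argue by contradiction via \cite[Lemma~2.2d]{ds}, compute $Q(z_0)=h_m(\zeta_0)$ with $h_m(z)=1+z+mz/(\beta+\alpha(1+z))$ and $m\ge n$, then show this point cannot lie in $h_n(\mathbb{D})$) coincides with the paper's. Where you genuinely diverge is in the decisive exclusion step. The paper first proves univalence of $h=h_n$ by a close-to-convexity computation ($\RE(zh'(z)/g(z))>0$ with $g(z)=z/(\beta+\alpha(1+z))$ starlike) and then compares moduli along the boundary, showing $|h_m(e^{i\theta})|\ge r(\theta)=|h_n(e^{i\theta})|$ and inferring $Q(z_0)\notin h(\mathbb{D})$. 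You instead prove injectivity of each $h_m$ on $\overline{\mathbb{D}}$ by the clean difference-quotient factorization $h_m(z_1)-h_m(z_2)=(z_1-z_2)\bigl(1+m(\beta+\alpha)/(v_1v_2)\bigr)$ with $\RE v_i\ge\beta>0$, and then establish the actual domain inclusion $h_n(\mathbb{D})\subseteq h_m(\mathbb{D})$ by exhibiting $h_{n+t}-1$ as a Loewner (subordination) chain; your verification that the relevant factor $v/(v^2+c)=1/(v+c/v)$ has positive real part is correct, as are the normalization checks. This buys you something real: a pointwise inequality $|h_m(e^{i\theta})|\ge|h_n(e^{i\theta})|$ does not by itself place $h_m(e^{i\theta})$ outside $h_n(\mathbb{D})$ unless one also controls arguments or knows $h_n(\mathbb{D})$ is radially situated in the right way, so the paper's final inference is somewhat loose, whereas your nesting $h_n(\mathbb{D})\subseteq h_m(\mathbb{D})$ together with $h_m(\partial\mathbb{D})=\partial(h_m(\mathbb{D}))$ closes the gap rigorously. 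The cost is invoking the subordination-chain machinery, which is heavier than the paper's elementary (if informal) boundary computation; your parenthetical outward-normal remark, $\RE\{\partial_t h_{n+t}(\zeta)/(\zeta h_{n+t}'(\zeta))\}>0$, is in fact the same inequality and could be presented as a lighter-weight substitute.
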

\begin{proof}
	Let us set $q(z)=1/(1+z),$ then
	\begin{equation*}
		h(z)=\dfrac{1}{q(z)}-\dfrac{nzq'(z)}{q(z)(\beta q(z)+\alpha)}.
	\end{equation*}
	If $g(z)=z/(\beta+\alpha(1+z)),$ then we have
	\begin{equation*}
		\RE \dfrac{zg'(z)}{g(z)}=1-\RE\dfrac{\alpha z}{\beta+\alpha(1+z)}>1-\dfrac{\alpha}{\beta+2\alpha}=\dfrac{\beta+\alpha}{\beta+2\alpha}>0.
	\end{equation*}
	Since $g$ is starlike and
	\begin{eqnarray*}
		\RE\dfrac{zh'(z)}{g(z)}&=&\RE\left((\beta+\alpha(1+z))\left(1+\dfrac{n(\beta+\alpha)}{(\beta+\alpha(1+z))^2}\right)\right)\\
		&=& \beta+\alpha(1+\RE z)+n(\beta+\alpha)\RE\left(\dfrac{1}{\beta+\alpha(1+z)}\right)\\
		&\geq& \beta+\dfrac{n(\beta+\alpha)}{\beta+2\alpha}>0,
	\end{eqnarray*}
	we deduce that $h$ is close to convex and hence univalent in $\mathbb{D}.$ Now we consider the boundary curve of $h$ defined as
	\begin{equation*}
		w=h(e^{i\theta})=u(\theta)+iv(\theta),\quad \theta\in(-\pi,\pi).
	\end{equation*}
	Suppose
	\begin{eqnarray}\label{14}
		\nonumber r(\theta)&=&|h(e^{i\theta})|=|e^{-i\theta}h(e^{i\theta})|=\left|e^{-i\theta}+1+\dfrac{n}{\beta+\alpha(1+e^{i\theta})}\right|\\
		&=&\left|1+\cos{\theta}+\dfrac{n(\beta+\alpha(1+\cos{\theta}))}{d(\theta)}-i\left(\sin{\theta}+\dfrac{n\alpha\sin{\theta}}{d(\theta)}\right)\right|,
	\end{eqnarray}
	where $d(\theta)=(\beta+\alpha(1+\cos{\theta}))^2+(\alpha\sin{\theta})^2.$ After simplifying~\eqref{14}, we have
	\begin{eqnarray*}
		r(\theta) &=& \sqrt{2(1+\cos{\theta})+\dfrac{n^2+2n(2\alpha+\beta)(1+\cos{\theta})}{d(\theta)}}.
	\end{eqnarray*}
	By using~\eqref{11} and~\eqref{12}, we deduce that
	\begin{equation}\label{13}
		Q(z)=\dfrac{1}{p(z)}-\dfrac{zp'(z)}{p(z)(\beta p(z)+\alpha)}\prec h(z).
	\end{equation}
	On the contrary, if $p\nprec q$ then by \cite[Lemma~2.2d]{ds}, there exist points $z_0\in\mathbb{D}$ and $\zeta_0\in\partial\mathbb{D}$ and $m\geq n,$ such that $p(z_0)=q(\zeta_0)$ and $z_0p'(z_0)=m\zeta_0q'(\zeta_0).$ From~\eqref{13}, we have
	\begin{equation*}
		Q(z_0)=\dfrac{1}{q(\zeta_0)}-\dfrac{m\zeta_0q'(\zeta_0)}{q(\zeta_0)(\beta q(\zeta_0)+\alpha)}=1+\zeta_0+\dfrac{m\zeta_0}{\beta+\alpha(1+\zeta_0)}.
	\end{equation*}
	For $\zeta_0=e^{i\theta},$ we have
	\begin{equation*}
		|Q(z_0)|=\sqrt{2(1+\cos{\theta})+\dfrac{m^2+2m(2\alpha+\beta)(1+\cos{\theta})}{d(\theta)}}\geq r(\theta)\quad \theta\in(-\pi,\pi),
	\end{equation*}
	where $r(\theta)$ is given in~\eqref{14}. This implies that $Q(z_0)\notin h(\mathbb{D}),$ which is a contradiction and thus $p(z)\prec 1/(1+z).$
\end{proof}
\begin{theorem}\label{thm1}
	Let $n$ be a positive integer and $\alpha,\beta$ be non negative real numbers with $\beta\neq 0$. Let $f\in\mathcal{A}_n$ and $F=I_{\alpha,\beta}[f]$ be defined as
	\begin{equation}\label{15}
		I_{\alpha,\beta}[f]=\left(\dfrac{\alpha+\beta}{f^{\alpha}(z)}\int_0^zf^{\alpha-1}(t)f'(t)t^{\beta}dt\right)^{1/\beta}.
	\end{equation}
	If
	\begin{equation*}
		\dfrac{zf'(z)}{f(z)}\prec 1+z+\dfrac{nz}{\beta+\alpha(1+z)},
	\end{equation*}
	then $|zf'(z)/f(z)|<2|zF'(z)/F(z)|.$
\end{theorem}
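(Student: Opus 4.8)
The plan is to reduce the statement to Lemma~\ref{lema1} by introducing the right auxiliary function. First I would set $Q(z)=zf'(z)/f(z)$; since $f\in\mathcal{A}_n$ we have $Q\in\mathcal{H}[1,n]$ with $Q(0)=1$, and the hypothesis $zf'(z)/f(z)\prec 1+z+nz/(\beta+\alpha(1+z))$ is exactly the subordination~\eqref{11} demanded of $Q$ in Lemma~\ref{lema1}. Next I would introduce, in analogy with the device used in the proof of Theorem~\ref{odlbb},
\[
p(z)=\frac{f^{\alpha}(z)z^{\beta}}{\beta}\left(\int_0^z f^{\alpha-1}(t)f'(t)t^{\beta}\,dt\right)^{-1}-\frac{\alpha}{\beta}.
\]
Matching the leading terms of the numerator and of the integral at the origin (using $f(z)=z+a_{n+1}z^{n+1}+\cdots$) shows that $p$ is analytic in $\mathbb{D}$, $p(0)=1$, and in fact $p\in\mathcal{H}[1,n]$.

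The second step is a logarithmic differentiation. Writing $\beta p(z)+\alpha=f^{\alpha}(z)z^{\beta}\big/\int_0^z f^{\alpha-1}(t)f'(t)t^{\beta}\,dt$ and differentiating $\log(\beta p+\alpha)$, together with the identity $f^{\alpha-1}(z)f'(z)z^{\beta}\big/\int_0^z f^{\alpha-1}f't^{\beta}\,dt=(f'(z)/f(z))(\beta p(z)+\alpha)$, yields after simplification
\[
p(z)Q(z)+\frac{zp'(z)}{\beta p(z)+\alpha}=1,
\]
which is precisely~\eqref{12}. Hence Lemma~\ref{lema1} applies and gives $p(z)\prec 1/(1+z)$. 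Since $1/(1+z)$ maps $\mathbb{D}$ onto the half-plane $\{\RE w>1/2\}$, this yields $\RE p(z)>1/2$ on $\mathbb{D}$, and in particular $|p(z)|\geq\RE p(z)>1/2$.

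It then remains to express $zF'/F$ through $p$. Combining the defining relation $F^{\beta}(z)=\frac{\alpha+\beta}{f^{\alpha}(z)}\int_0^z f^{\alpha-1}(t)f'(t)t^{\beta}\,dt$ from~\eqref{15} with $\int_0^z f^{\alpha-1}f't^{\beta}\,dt=f^{\alpha}(z)z^{\beta}/(\beta p(z)+\alpha)$ gives $F^{\beta}(z)=(\alpha+\beta)z^{\beta}/(\beta p(z)+\alpha)$; since $\RE(\beta p(z)+\alpha)\geq\beta/2+\alpha>0$ this is a well-defined element of $\mathcal{A}_n$ with $F(z)/z\neq0$. Differentiating $\log F^{\beta}$ logarithmically and invoking~\eqref{12} gives
\[
\frac{zF'(z)}{F(z)}=1-\frac{zp'(z)}{\beta p(z)+\alpha}=p(z)Q(z)=p(z)\,\frac{zf'(z)}{f(z)}.
\]
Therefore $\bigl|zF'(z)/F(z)\bigr|=|p(z)|\,\bigl|zf'(z)/f(z)\bigr|>\tfrac12\bigl|zf'(z)/f(z)\bigr|$, which is the claimed inequality.

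I do not anticipate a genuine obstacle: the only substantive ingredient is Lemma~\ref{lema1}, which is already established, and the rest is the bookkeeping of two logarithmic differentiations together with the half-plane description of the range of $1/(1+z)$. The one point requiring a little care is checking that the auxiliary $p$ really belongs to $\mathcal{H}[1,n]$ (so that Lemma~\ref{lema1} is invoked with the correct value of $n$), which follows from the leading-order cancellation in the definition of $p$ noted above.
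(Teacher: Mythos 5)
Your proposal is correct and follows essentially the same route as the paper: the same auxiliary function $p$, the same reduction to Lemma~\ref{lema1} via the differential equation $p(z)Q(z)+zp'(z)/(\beta p(z)+\alpha)=1$, and the same identification $F^{\beta}(z)=(\alpha+\beta)z^{\beta}/(\beta p(z)+\alpha)$. The only cosmetic difference is at the end, where you pass from $p\prec 1/(1+z)$ to $|p|\geq\RE p>1/2$, while the paper equivalently writes $1/p\prec 1+z$ and reads off $\bigl|\tfrac{zf'/f}{zF'/F}-1\bigr|<1$; both yield the stated inequality.
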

\begin{proof}
	Let $f\in\mathcal{A}$ satisfy~\eqref{15} and define
	\begin{equation*}
		p(z)=z^{\beta}f^{\alpha}(z)\left(\beta\int_0^z f^{\alpha-1}(t)f'(t)t^{\beta}dt\right)^{-1}-\dfrac{\alpha}{\beta}.
	\end{equation*}
	By the series expansion, it is easy to verify that $p$ is well defined and $p\in\mathcal{H}[1,n].$ If we let $Q(z)=zf'(z)/f(z),$ then it is easy to show that $p$ satisfies~\eqref{12}. Hence by Lemma~\ref{lema1} we deduce that $p(z)\prec 1/(1+z).$ Since $p(z)\neq 0,$ we can define the analytic function $F\in\mathcal{A}_n$ by
	\begin{equation*}
		F(z)=z\left(\dfrac{\alpha+\beta}{\beta p(z)+\alpha}\right)^{1/\beta}.
	\end{equation*}
	A simple calculation shows that this function coincides with the function given in~\eqref{15}. So we obtain
	\begin{equation*}
		\dfrac{F(z)Q(z)}{zF'(z)}=\dfrac{1}{p(z)}\prec 1+z,
	\end{equation*}
	which further implies $\left|\tfrac{zf'(z)/f(z)}{zF'(z)/F(z)}-1\right|<1$ and hence the result follows at once.
\end{proof}
\begin{theorem}\label{thm2}
	Let n be a positive integer and $\alpha,\beta$ be non-negative real numbers with $\beta\neq 0$. Let $f\in\mathcal{A}$ satisfy
	\begin{equation*}
		\dfrac{f(z)}{zf'(z)}-\left(\dfrac{zf''(z)}{f'(z)}+1-\dfrac{zf'(z)}{f(z)}\right)\left(\beta\dfrac{zf'(z)}{f(z)}+\alpha\right)^{-1}\prec 1+z+\dfrac{nz}{\beta+\alpha(1+z)}.
	\end{equation*}
	then $zf'(z)/f(z)\prec 1/(1+z).$
\end{theorem}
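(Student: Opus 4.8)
The plan is to derive this from Lemma~\ref{lema1} by recognizing $zf'(z)/f(z)$ as the solution $p$ of a differential equation of the form~\eqref{12}, with the left-hand side of the hypothesis playing the role of $Q$. Note first that the hypothesis only makes sense if the displayed expression is analytic in $\mathbb{D}$, so implicitly $f'(z)\neq0$, $f(z)/z\neq0$ and $\beta\,zf'(z)/f(z)+\alpha\neq0$ throughout $\mathbb{D}$.

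Accordingly, I would set $p(z)=zf'(z)/f(z)$. By the remarks above $p$ is analytic in $\mathbb{D}$ and $\beta p(z)+\alpha\neq0$; writing the power series of $f$ (taken in $\mathcal{A}_n$, which is what the presence of $n$ in the hypothesis requires) gives $p(0)=1$ and $p\in\mathcal{H}[1,n]$. Logarithmic differentiation of $p$ yields $\dfrac{zp'(z)}{p(z)}=1+\dfrac{zf''(z)}{f'(z)}-\dfrac{zf'(z)}{f(z)}$, and combining this with $1/p(z)=f(z)/(zf'(z))$ a short computation shows
\begin{equation*}
	\frac{1}{p(z)}\left(1-\frac{zp'(z)}{\beta p(z)+\alpha}\right)=\frac{f(z)}{zf'(z)}-\left(\frac{zf''(z)}{f'(z)}+1-\frac{zf'(z)}{f(z)}\right)\left(\beta\frac{zf'(z)}{f(z)}+\alpha\right)^{-1}.
\end{equation*}

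Denote this common value by $Q(z)$. Then, by hypothesis, $Q(z)\prec 1+z+nz/(\beta+\alpha(1+z))$; moreover, since $p\in\mathcal{H}[1,n]$ forces $zp'(z)=O(z^n)$, the defining relation of $Q$ gives $Q\in\mathcal{H}[1,n]$, and rearranging that same relation gives precisely $p(z)Q(z)+zp'(z)/(\beta p(z)+\alpha)=1$, i.e., equation~\eqref{12}. Thus all the hypotheses of Lemma~\ref{lema1} are in place, and the lemma yields $p(z)\prec 1/(1+z)$, that is, $zf'(z)/f(z)\prec 1/(1+z)$.

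The only real work is the logarithmic-differentiation identity in the display above and the accompanying check that $p,Q\in\mathcal{H}[1,n]$ (the latter being why $f$ should be assumed in $\mathcal{A}_n$); once these are settled, the conclusion is an immediate invocation of Lemma~\ref{lema1}, in complete parallel with the way Theorem~\ref{thm1} was handled.
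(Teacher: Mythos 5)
Your proposal is correct and follows exactly the paper's own route: set $p(z)=zf'(z)/f(z)$, verify via logarithmic differentiation that the hypothesis expression is $Q(z)=\tfrac{1}{p(z)}-\tfrac{zp'(z)}{p(z)(\beta p(z)+\alpha)}$ so that $p(z)Q(z)+zp'(z)/(\beta p(z)+\alpha)=1$, and invoke Lemma~\ref{lema1}. You in fact supply more detail than the paper (which omits the computation), and your side remark that $f\in\mathcal{A}_n$ is what is really needed for $p,Q\in\mathcal{H}[1,n]$ when $n>1$ is a fair observation about the statement as printed.
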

\begin{proof}
	Let $p(z)=zf'(z)/f(z).$ Then
	\begin{equation*}
		Q(z)=\dfrac{f(z)}{zf'(z)}-\left(\dfrac{zf''(z)}{f'(z)}+1-\dfrac{zf'(z)}{f(z)}\right)\left(\beta\dfrac{zf'(z)}{f(z)}+\alpha\right)^{-1}
	\end{equation*}
	satisfies the following differential equation
	\begin{equation*}
		p(z)Q(z)+\dfrac{zp'(z)}{\beta p(z)+\alpha}=1.
	\end{equation*}
	Now applying Lemma~\ref{lema1}, we obtain $p(z)\prec 1/(1+z)$ and that completes the proof.
\end{proof}

If we take $\beta=1,\alpha=0$ and $n=1$ in the above theorem, we obtain the following corollary, which is a particular case of a result of Tuneski~\cite[Theorem 2.5]{tun2}.
\begin{corollary}
	Let $f\in\mathcal{A}$ satisfies
	\begin{equation*}
		\left| \dfrac{f(z)f''(z)}{(f'(z))^2}\right|<2,
	\end{equation*}
	then $zf'(z)/f(z)\prec 1/(1+z).$
\end{corollary}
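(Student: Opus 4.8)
The plan is to derive this corollary as the special case $\beta=1$, $\alpha=0$, $n=1$ of Theorem~\ref{thm2}, as anticipated in the sentence preceding its statement. First I would specialize the subordinant appearing in Theorem~\ref{thm2}: for $n=1$, $\beta=1$, $\alpha=0$ one has
\[
	1+z+\frac{nz}{\beta+\alpha(1+z)}=1+2z,
\]
so Theorem~\ref{thm2} tells us that
\[
	\frac{f(z)}{zf'(z)}-\left(\frac{zf''(z)}{f'(z)}+1-\frac{zf'(z)}{f(z)}\right)\left(\frac{zf'(z)}{f(z)}\right)^{-1}\prec 1+2z
\]
implies $zf'(z)/f(z)\prec 1/(1+z)$. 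It therefore suffices to recognize the displayed subordination as a restatement of the bound $|f(z)f''(z)/(f'(z))^2|<2$.

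Next I would carry out the algebraic simplification of the left-hand side. Setting $w(z)=zf'(z)/f(z)$, which is analytic on $\mathbb{D}$ with $w(0)=1$ (all quantities below being understood as analytic, as implicit in the statement), the left-hand side equals
\[
	\frac{1}{w}-\left(\frac{zf''(z)}{f'(z)}+1-w\right)\frac{1}{w}=\frac{1}{w}\left(w-\frac{zf''(z)}{f'(z)}\right)=1-\frac{1}{w}\cdot\frac{zf''(z)}{f'(z)}=1-\frac{f(z)f''(z)}{(f'(z))^2},
\]
where we used $\frac{1}{w}\cdot\frac{zf''(z)}{f'(z)}=\frac{f(z)}{zf'(z)}\cdot\frac{zf''(z)}{f'(z)}=\frac{f(z)f''(z)}{(f'(z))^2}$. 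Hence the hypothesis of Theorem~\ref{thm2} reads, in this case, $1-f(z)f''(z)/(f'(z))^2\prec 1+2z$.

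Finally I would match this subordination with the hypothesis of the corollary. Since $f(0)=0$, the function $1-f(z)f''(z)/(f'(z))^2$ has the value $1$ at the origin, and $1+2z$ is univalent on $\mathbb{D}$ with $(1+2z)\big|_{z=0}=1$ and image $\{w:|w-1|<2\}$; consequently subordination to $1+2z$ is equivalent to $1-f(z)f''(z)/(f'(z))^2$ mapping $\mathbb{D}$ into $\{w:|w-1|<2\}$, i.e. to $|f(z)f''(z)/(f'(z))^2|<2$ on $\mathbb{D}$, which is exactly the assumed bound. Invoking Theorem~\ref{thm2} now yields $zf'(z)/f(z)\prec 1/(1+z)$, as claimed.

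I do not anticipate a real obstacle: the corollary is an immediate specialization of Theorem~\ref{thm2}. The only steps that require a little care are the cancellation that reduces the complicated left-hand expression to $1-ff''/(f')^2$, and the routine but necessary remark that, because $1+2z$ is univalent and fixes $1$, the boundedness condition $|ff''/(f')^2|<2$ and the subordination $1-ff''/(f')^2\prec 1+2z$ are one and the same.
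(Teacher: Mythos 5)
Your proposal is correct and is exactly the route the paper intends: the corollary is obtained by setting $\beta=1$, $\alpha=0$, $n=1$ in Theorem~\ref{thm2}, whereupon the subordinant becomes $1+2z$ and the left-hand side simplifies to $1-f(z)f''(z)/(f'(z))^2$, so that subordination to $1+2z$ is equivalent to the stated modulus bound. You have merely written out the algebraic simplification and the univalence remark that the paper leaves implicit.
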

\begin{theorem}\label{thm3}
	Let n be a positive integer and $\alpha,\beta$ be non-negative real numbers with $\beta\neq 0$. Let $f\in\mathcal{A}$ satisfy
	\begin{equation*}
		\dfrac{f(z)}{z}+\left(\dfrac{zf'(z)}{f(z)}-1\right)\left(\dfrac{\beta z}{f(z)}+\alpha\right)^{-1}\prec 1+z+\dfrac{nz}{\beta+\alpha(1+z)},
	\end{equation*}
	then $f(z)/z\prec 1+z.$
\end{theorem}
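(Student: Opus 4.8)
The plan is to imitate the proofs of Theorems~\ref{thm1} and~\ref{thm2}, so that the statement follows from a single application of Lemma~\ref{lema1}. I would set
\[
p(z):=\frac{z}{f(z)},
\]
and, exactly as in the two preceding proofs, verify from the series expansion that $p$ is well defined, analytic in $\mathbb{D}$ with $p(0)=1$, and $p\in\mathcal{H}[1,n]$; note also that $\beta z/f(z)+\alpha=\beta p(z)+\alpha$, so the left-hand side of the hypothesis is analytic with value $1$ at the origin.

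Next I would recognise that left-hand side as the auxiliary function $Q$ of Lemma~\ref{lema1}. Logarithmic differentiation of $p(z)=z/f(z)$ gives the identity $zf'(z)/f(z)-1=-\,zp'(z)/p(z)$, and substituting it, together with $\beta z/f(z)+\alpha=\beta p(z)+\alpha$, into
\[
Q(z):=\frac{f(z)}{z}+\left(\frac{zf'(z)}{f(z)}-1\right)\left(\frac{\beta z}{f(z)}+\alpha\right)^{-1}
\]
turns this into $Q(z)=\dfrac{1}{p(z)}-\dfrac{zp'(z)}{p(z)(\beta p(z)+\alpha)}$; multiplying by $p(z)$ produces precisely the differential equation~\eqref{12},
\[
p(z)Q(z)+\frac{zp'(z)}{\beta p(z)+\alpha}=1 .
\]
Here $Q\in\mathcal{H}[1,n]$ and the hypothesis of the theorem is exactly the subordination $Q(z)\prec 1+z+\dfrac{nz}{\beta+\alpha(1+z)}$, i.e.\ condition~\eqref{11}. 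Lemma~\ref{lema1} therefore yields $p(z)\prec 1/(1+z)$.

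To finish, I would simply invert: if $p(z)=1/(1+\omega(z))$ for a Schwarz function $\omega$ (which exists by the subordination just obtained), then $f(z)/z=1/p(z)=1+\omega(z)$, so $f(z)/z\prec 1+z$, as claimed.

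I do not expect a real obstacle here: the proof is a direct transcription of the pattern already used for Theorems~\ref{thm1} and~\ref{thm2}. The only steps needing attention are the bookkeeping in the logarithmic-derivative identity that rewrites the hypothesis as~\eqref{12}, and the (routine) check that $p=z/f$ is well defined and lies in $\mathcal{H}[1,n]$; both are immediate once one notes that everything here is controlled by $\log f(z)-\log z$.
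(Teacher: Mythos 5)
Your proposal is correct and is exactly the paper's argument: the paper's proof of this theorem consists precisely of taking $p(z)=z/f(z)$ and $Q(z)$ as the left-hand side of the hypothesis, and then invoking Lemma~\ref{lema1} as in Theorem~\ref{thm2}. Your explicit verification of the identity $p(z)Q(z)+zp'(z)/(\beta p(z)+\alpha)=1$ and the final inversion step $f(z)/z=1/p(z)\prec 1+z$ just fills in the details the paper leaves to the reader.
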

\begin{proof}
	Taking $p(z)=z/f(z)$ and $$Q(z)=\dfrac{f(z)}{z}+\left(\dfrac{zf'(z)}{f(z)}-1\right)\left(\dfrac{\beta z}{f(z)}+\alpha\right)^{-1},$$
	the proof goes similarly as that of Theorem~\ref{thm2} and the result follows at once.
\end{proof}

By taking $\beta=1, \alpha=0$ and $n=1$ in Theorem~\ref{thm3}, we obtain the following result.
\begin{corollary}
	Let $f\in\mathcal{A}$ be such that $f'(z)\prec 1+2z,$ then
	$f(z)/z\prec 1+z,$
	or equivalently
	$$|f'(z)-1|<2\Rightarrow |f(z)/z-1|<1.$$
\end{corollary}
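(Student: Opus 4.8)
The plan is to specialize Theorem~\ref{thm3} to $\beta=1$, $\alpha=0$, $n=1$ and then to translate the resulting subordinations into the stated disk inequalities. First I would substitute these values into the auxiliary function appearing in Theorem~\ref{thm3},
$$Q(z)=\dfrac{f(z)}{z}+\left(\dfrac{zf'(z)}{f(z)}-1\right)\left(\dfrac{\beta z}{f(z)}+\alpha\right)^{-1}.$$
With $\beta=1$ and $\alpha=0$ the last factor is exactly $f(z)/z$, so
$$Q(z)=\dfrac{f(z)}{z}+\left(\dfrac{zf'(z)}{f(z)}-1\right)\dfrac{f(z)}{z}=\dfrac{f(z)}{z}\cdot\dfrac{zf'(z)}{f(z)}=f'(z).$$
At the same time the dominant $1+z+nz/(\beta+\alpha(1+z))$ becomes $1+z+z=1+2z$. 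Hence, for these parameters, the hypothesis of Theorem~\ref{thm3} is precisely $f'(z)\prec 1+2z$ and its conclusion is $f(z)/z\prec 1+z$; this already yields the first assertion of the corollary.

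For the ``equivalently'' reformulation I would note that $z\mapsto 1+2z$ maps $\mathbb{D}$ univalently onto the open disk $\{w:|w-1|<2\}$, and $z\mapsto 1+z$ maps $\mathbb{D}$ univalently onto $\{w:|w-1|<1\}$. Since $f\in\mathcal{A}$ gives $f'(0)=1$ and $\lim_{z\to 0}f(z)/z=1$, the subordination $f'(z)\prec 1+2z$ is equivalent to $|f'(z)-1|<2$ for all $z\in\mathbb{D}$, and $f(z)/z\prec 1+z$ is equivalent to $|f(z)/z-1|<1$ for all $z\in\mathbb{D}$. Combining these equivalences with the implication just obtained gives $|f'(z)-1|<2\Rightarrow|f(z)/z-1|<1$.

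There is no genuine obstacle here, since the statement is a pure specialization of a theorem already proved above; the only step worth writing out with some care is the collapse $Q(z)=f'(z)$, which uses $\alpha=0$ in an essential way (this is what makes $(\beta z/f(z)+\alpha)^{-1}$ equal to $f(z)/z$), together with the remark that the hypotheses of Theorem~\ref{thm3} already incorporate the well-definedness of $p(z)=z/f(z)$, so no extra non-vanishing assumption on $f(z)/z$ needs to be imposed.
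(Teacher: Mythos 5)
Your proposal is correct and follows exactly the route the paper intends: specializing Theorem~\ref{thm3} to $\beta=1$, $\alpha=0$, $n=1$, checking that $Q(z)$ collapses to $f'(z)$ and the dominant to $1+2z$, and then translating both subordinations into disk inequalities via univalence of $1+2z$ and $1+z$ together with the normalizations $f'(0)=1$ and $f(z)/z\to 1$. Nothing further is needed.
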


We now prove the following lemma in order to derive some sufficient conditions for starlikeness:
\begin{lemma}\label{lema2}
	Let $n$ be a positive integer and $\alpha,\beta$ be non negative integers with $\beta\neq 0.$ Suppose that either $\alpha<\beta<3\alpha$ or $\beta<\alpha<3\beta$ and $Q\in\mathcal{H}[1,n]$ satisfy
	\begin{equation}\label{18}
		Q(z)\prec \dfrac{1+z}{1-z}+\dfrac{2nz}{(1-z)((\alpha+\beta)+(\alpha-\beta)z)}=h(z).
	\end{equation}
	If $p\in\mathcal{H}[1,n]$ satisfies the differential equation
\end{lemma}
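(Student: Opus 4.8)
The plan is to mimic, step for step, the proof of Lemma~\ref{lema1}. Accordingly I expect the missing hypothesis to read $p(z)Q(z)+\dfrac{zp'(z)}{\beta p(z)+\alpha}=1$ and the conclusion to be $p(z)\prec q(z)$, where $q(z):=(1-z)/(1+z)$. Everything hinges on the identity
\begin{equation*}
	h(z)=\dfrac{1}{q(z)}-\dfrac{nzq'(z)}{q(z)\bigl(\beta q(z)+\alpha\bigr)},
\end{equation*}
which is checked directly from $q(z)=(1-z)/(1+z)$, so that~\eqref{18} is exactly the ``$m=n$'' specialization of the expression that will resurface at a boundary contact point; note also $q(0)=1$, so that \cite[Lemma~2.2d]{ds} will apply to $p$ versus $q$.

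First I would prove $h$ is univalent in $\mathbb{D}$. Introduce the auxiliary function $g(z)=z\bigl[(1-z)\bigl((\alpha+\beta)+(\alpha-\beta)z\bigr)\bigr]^{-1}$, for which $h(z)=\dfrac{1+z}{1-z}+2n\,g(z)$. A short computation gives
\begin{equation*}
	\dfrac{zg'(z)}{g(z)}=\dfrac{1}{1-z}-\dfrac{(\alpha-\beta)z}{(\alpha+\beta)+(\alpha-\beta)z},
\end{equation*}
whose boundary values have nonnegative real part because $|\alpha-\beta|\le\alpha+\beta$; together with the fact that the poles $z=1$ and $z=-(\alpha+\beta)/(\alpha-\beta)$ of $g$ lie off $\mathbb{D}$ (as $\alpha,\beta\ge0$, $\beta\neq0$), this gives $g\in\mathcal{S}^{*}$. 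A second computation yields
\begin{equation*}
	\dfrac{zh'(z)}{g(z)}=\dfrac{2\bigl((\alpha+\beta)+(\alpha-\beta)z\bigr)}{1-z}+2n\,\dfrac{zg'(z)}{g(z)},
\end{equation*}
and since $\dfrac{(\alpha+\beta)+(\alpha-\beta)z}{1-z}=-(\alpha-\beta)+\dfrac{2\alpha}{1-z}$ has real part exceeding $\beta$, both summands have positive real part, so $\RE\bigl(zh'(z)/g(z)\bigr)>0$; hence $h$ is close-to-convex and therefore univalent in $\mathbb{D}$.

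Next comes the boundary analysis. Putting $\zeta=e^{i\theta}$ and simplifying gives
\begin{equation*}
	h(e^{i\theta})=-\dfrac{n\beta(1-\cos\theta)}{d(\theta)}+i\left(\cot\dfrac{\theta}{2}+\dfrac{n\alpha\sin\theta}{d(\theta)}\right),\qquad d(\theta):=\beta^{2}(1-\cos\theta)^{2}+\alpha^{2}\sin^{2}\theta,
\end{equation*}
and a brief reduction produces the clean formula
\begin{equation*}
	r(\theta)^{2}:=|h(e^{i\theta})|^{2}=\dfrac{1+\cos\theta}{1-\cos\theta}+\dfrac{n^{2}+2n\alpha(1+\cos\theta)}{d(\theta)}.
\end{equation*}
Now suppose $p\nprec q$. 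By \cite[Lemma~2.2d]{ds} there are $z_{0}\in\mathbb{D}$, $\zeta_{0}=e^{i\theta}\in\partial\mathbb{D}$ (necessarily $\zeta_{0}\neq1$) and $m\ge n$ with $p(z_{0})=q(\zeta_{0})$ and $z_{0}p'(z_{0})=m\zeta_{0}q'(\zeta_{0})$; substituting into the differential equation gives
\begin{equation*}
	Q(z_{0})=\dfrac{1}{q(\zeta_{0})}-\dfrac{m\zeta_{0}q'(\zeta_{0})}{q(\zeta_{0})\bigl(\beta q(\zeta_{0})+\alpha\bigr)}=\dfrac{1+\zeta_{0}}{1-\zeta_{0}}+\dfrac{2m\zeta_{0}}{(1-\zeta_{0})\bigl((\alpha+\beta)+(\alpha-\beta)\zeta_{0}\bigr)},
\end{equation*}
i.e. the same expression as $h(e^{i\theta})$ but with $m$ in place of $n$. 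Hence $|Q(z_{0})|^{2}=\dfrac{1+\cos\theta}{1-\cos\theta}+\dfrac{m^{2}+2m\alpha(1+\cos\theta)}{d(\theta)}$ is nondecreasing in $m$ (since $\alpha\ge0$ and $1+\cos\theta\ge0$), so from $m\ge n$ we get $|Q(z_{0})|\ge r(\theta)$.

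The remaining — and hardest — step is to upgrade $|Q(z_{0})|\ge r(\theta)$ to $Q(z_{0})\notin h(\mathbb{D})$, which contradicts $Q\prec h$ and forces $p\prec(1-z)/(1+z)$. Here $q$ takes unbounded values (in contrast to Lemma~\ref{lema1}, where $h$ was bounded-valued), so $h(\mathbb{D})$ is an unbounded ``open-door''-type region: its boundary $\{h(e^{i\theta}):\theta\in(0,2\pi)\}$ lies in the closed left half-plane, since $\RE h(e^{i\theta})=-n\beta(1-\cos\theta)/d(\theta)\le0$, and escapes to $\infty$ as $\theta\to0^{\pm}$, while $h(0)=1$. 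I expect the arithmetic hypothesis ``$\alpha<\beta<3\alpha$ or $\beta<\alpha<3\beta$'' (equivalently $\alpha,\beta>0$, $\alpha\neq\beta$, $1/3<\alpha/\beta<3$) to be precisely the condition making this boundary curve regular enough — most plausibly that $h(\mathbb{D})$ is starlike with respect to the origin, or that $\theta\mapsto h(e^{i\theta})$ is monotone in modulus on each of the half-arcs $(0,\pi)$ and $(\pi,2\pi)$ — so that a point whose modulus is at least $r(\theta)$ in the corresponding direction cannot lie in $h(\mathbb{D})$. I would pin down the exact inequality by differentiating $r(\theta)^{2}$ (or by examining $\RE\bigl(zh'(z)/h(z)\bigr)$) and reading off which constraint on $\alpha/\beta$ keeps the relevant quantity of one sign, expecting the bounds $1/3$ and $3$ to fall out there. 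With that geometric fact in place, the contradiction is immediate — exactly as in the proof of Lemma~\ref{lema1} — and the lemma follows.
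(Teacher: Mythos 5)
Your reconstruction of the missing hypothesis and conclusion is correct, and your argument tracks the paper's proof step for step: the identity $h=1/q-nzq'/(q(\beta q+\alpha))$ with $q(z)=(1-z)/(1+z)$, univalence of $h$ via close-to-convexity, the boundary modulus $r(\theta)$, and the contradiction through \cite[Lemma~2.2d]{ds} with $m\ge n$. Your formula for $r(\theta)^2$ is exactly the paper's, which is written in terms of $\gamma=\cot(\theta/2)$; one checks $\gamma^2=(1+\cos\theta)/(1-\cos\theta)$, $4n\alpha\gamma^2/(1+\gamma^2)=2n\alpha(1+\cos\theta)$ and $\sin^2\theta((\beta/\gamma)^2+\alpha^2)=d(\theta)$. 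The one genuine divergence is the univalence step: the paper tests $zh'$ against the Koebe function $k(z)=z/(1-z)^2$, which reduces matters to showing $|\arg((a+be^{2i\theta})/(a+be^{i\theta})^2)|<\pi/2$ with $a=\alpha+\beta$, $b=\alpha-\beta$, and it is precisely there that the hypothesis $\alpha<\beta<3\alpha$ or $\beta<\alpha<3\beta$ is consumed, via the bound $3\arctan(|b|/\sqrt{a^2-b^2})<\pi/2$. Your comparison function $g(z)=z/((1-z)((\alpha+\beta)+(\alpha-\beta)z))$ yields a cleaner computation needing only $\alpha,\beta>0$, so your expectation that the ratio condition would surface in the final geometric step does not match the paper: there it is used only for close-to-convexity. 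As for the step you candidly leave open --- upgrading $|Q(z_0)|\ge r(\theta)$ to $Q(z_0)\notin h(\mathbb{D})$ --- the paper asserts it with no more justification than you offer; and since $Q(z_0)=h(\zeta_0)+(m-n)\cdot 2\zeta_0/((1-\zeta_0)((\alpha+\beta)+(\alpha-\beta)\zeta_0))$ is a displacement of the boundary point in a direction of nonpositive real part rather than a radial push, a modulus comparison alone does not settle it. So your write-up and the paper's share the same unproved geometric claim at the end; you are not missing anything the paper actually supplies, and your univalence argument is the tidier of the two.
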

\begin{equation}\label{19}
	p(z)Q(z)+\dfrac{zp'(z)}{\beta p(z)+\alpha}=1,
\end{equation}
then $p(z)\prec (1-z)/(1+z).$
\begin{proof}
	Let us set $q(z)=(1-z)/(1+z),$ then
	\begin{equation*}
		h(z)=\dfrac{1}{q(z)}-\dfrac{nzq'(z)}{q(z)(\beta q(z)+\alpha)}.
	\end{equation*}
	We know that the Koebe function $k(z)=z/(1-z)^2$ is starlike and
	\begin{eqnarray*}
		\RE\dfrac{zh'(z)}{k(z)}&=&\RE\left((1-z)^2\left(\dfrac{2}{(1-z)^2}+\dfrac{2n((\alpha+\beta)+(\alpha-\beta)z^2)}{(1-z)^2((\alpha+\beta)+(\alpha-\beta) z)^2}\right)\right)\\
		&=& \RE\left(2+\dfrac{2n((\alpha+\beta)+(\alpha-\beta)z^2)}{((\alpha+\beta)+(\alpha-\beta)z)^2}\right)\\
		&=& 2+2n\RE\left(\dfrac{(\alpha+\beta)+(\alpha-\beta)z^2}{((\alpha+\beta)+(\alpha-\beta)z)^2}\right).
	\end{eqnarray*}
	Now taking $\alpha+\beta=a,$ $\alpha-\beta=b$ and $z=e^{i\theta}$ for $\theta\in(-\pi,\pi),$ we have
	\begin{eqnarray*}
		\left|\arg\left(\dfrac{a+be^{2i\theta}}{(a+be^{i\theta})^2}\right)\right|&=&|\arg{(a+be^{2i\theta})}-2\arg{(a+be^{i\theta})}|\\
		&=&\left|\arctan{\left(\dfrac{b\sin{2\theta}}{a+b\cos{2\theta}}\right)}-2\arctan{\left(\dfrac{b\sin{\theta}}{a+b\cos{\theta}}\right)}\right|\\
		&\leq&\left|\arctan{\left(\dfrac{b\sin{2\theta}}{a+b\cos{2\theta}}\right)}\right|+2\left|\arctan{\left(\dfrac{b\sin{\theta}}{a+b\cos{\theta}}\right)}\right|.
	\end{eqnarray*}
	Since $\arctan{x}$ is an increasing function in $(-\pi,\pi)$, we now find the maximum of $g(x):=b\sin{x}/(a+b\cos{x})$ for $x\in(-\pi,\pi).$ Clearly, $a>0$ and after some elementary calculations, we deduce that $g(x)$ attains its maximum at $x=\pi-\arccos{(b/a)}$ when $b>0$ and at $x=-\arccos{(-b/a)}$ when $b<0,$ $b/\sqrt{a^2-b^2}$ and $-b/\sqrt{a^2-b^2}$ are the corresponding maximum values. Thus we have
	\begin{equation*}
		\left|\arg\left(\dfrac{a+be^{2i\theta}}{(a+be^{i\theta})^2}\right)\right|\leq 3\arctan{\left(\dfrac{|b|}{\sqrt{a^2-b^2}}\right)}.
	\end{equation*}
	For the case when $\beta<\alpha<3\beta,$ we observe that
	\begin{equation*}
		3\arctan{\left(\tfrac{|b|}{\sqrt{a^2-b^2}}\right)}= 3\arctan{\left(\tfrac{\alpha-\beta}{2\sqrt{\alpha\beta}}\right)}<3\arctan{\left(\tfrac{1}{3}\sqrt{\tfrac{\alpha}{\beta}}\right)}
		<3\arctan{\left(\tfrac{1}{\sqrt{3}}\right)}=\tfrac{\pi}{2}.
	\end{equation*}
	The other case can also be verified in the similar way and it can be concluded that
	\begin{equation*}
		\left|\arg\left(\dfrac{(\alpha+\beta)+(\alpha-\beta)z^2}{((\alpha+\beta)+(\alpha-\beta)z)^2}\right)\right|<\dfrac{\pi}{2},
	\end{equation*}
	which further implies that $\RE(zh'(z)/k(z))>0.$ So $h$ is close to convex and hence univalent in $\mathbb{D}.$ Now we consider the boundary curve of $h$ defined as
	\begin{equation*}
		h(e^{i\theta})=u(\theta)+iv(\theta),\quad \theta\in(-\pi,\pi).
	\end{equation*}
	Since $e^{i\theta}$ is a boundary point, without loss of generality we may assume that $(1+e^{i\theta})/(1-e^{i\theta})=i\gamma$ and thus
	\begin{eqnarray}
		\nonumber r(\theta)&=&|h(e^{i\theta})|=\left|i\gamma+\dfrac{2ne^{i\theta}}{(1-e^{2i\theta})(\beta/i\gamma+\alpha)}\right|=\left|i\gamma-\dfrac{n}{\sin{\theta}(\beta/\gamma+i\alpha)}\right|.
	\end{eqnarray}
	So
	\begin{eqnarray}\label{21}
		r(\theta)= \sqrt{\gamma^2+\dfrac{n^2+2n\alpha\gamma\sin{\theta}}{\sin^2{\theta}((\beta/\gamma)^2+\alpha^2)}}=\sqrt{\gamma^2+\dfrac{n^2+4n\alpha\gamma^2/(1+\gamma^2)}{\sin^2{\theta}((\beta/\gamma)^2+\alpha^2)}}.
	\end{eqnarray}
	From~\eqref{18} and~\eqref{19}, we deduce that
	\begin{equation}\label{20}
		Q(z)=\dfrac{1}{p(z)}-\dfrac{zp'(z)}{p(z)(\beta p(z)+\alpha)}\prec h(z).
	\end{equation}
	On the contrary if $p$ is not subordinate to $q$, then by \cite[Lemma.2.2d]{ds}, there exist points $z_0\in\mathbb{D}$ and $\zeta_0\in\partial\mathbb{D}$ and $m\geq n,$ such that $p(z_0)=q(\zeta_0)$ and $z_0p'(z_0)=m\zeta_0q'(\zeta_0).$ From~\eqref{20}, we have
	\begin{equation*}
		Q(z_0)=\dfrac{1}{q(\zeta_0)}-\dfrac{m\zeta_0q'(\zeta_0)}{q(\zeta_0)(\beta q(\zeta_0)+\alpha)}=\dfrac{1+\zeta_0}{1-\zeta_0}+\dfrac{2m\zeta_0}{(1-\zeta_0)((\beta+\alpha)+(\alpha-\beta)\zeta_0)}.
	\end{equation*}
	For $\zeta_0=e^{i\theta},$ we have
	\begin{equation*}
		|Q(z_0)|=\sqrt{\gamma^2+\dfrac{m^2+4m\alpha\gamma^2/(1+\gamma^2)}{\sin^2{\theta}((\beta/\gamma)^2+\alpha^2)}}\geq r(\theta)\quad \theta\in(-\pi,\pi),
	\end{equation*}
	where $r(\theta)$ is given in~\eqref{21}. This implies that $Q(z_0)\notin h(\mathbb{D}),$ which is a contradiction and thus we  have $p(z)\prec (1-z)/(1+z).$
\end{proof}
\begin{theorem}\label{noproof}
	Let $n$ be a positive integer and $\alpha,\beta$ be non negative real numbers with $\beta\neq 0,$ either $\alpha<\beta<3\alpha$ or $\beta<\alpha<3\beta.$ Let $f\in\mathcal{A}_n$ and $F=A_{\alpha,\beta}[f]$ is as defined in~\eqref{15}. If
	\begin{equation*}
		\dfrac{zf'(z)}{f(z)}\prec \dfrac{1+z}{1-z}+\dfrac{2nz}{(1-z)((\alpha+\beta)+(\alpha-\beta)z)},
	\end{equation*}
	then $\RE\left(\tfrac{zF'(z)/F(z)}{zf'(z)/f(z)}\right)>0.$
\end{theorem}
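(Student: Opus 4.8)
The plan is to follow the proof of Theorem~\ref{thm1} almost verbatim, with Lemma~\ref{lema2} in the place occupied there by Lemma~\ref{lema1}. First I would introduce the auxiliary function
\begin{equation*}
	p(z)=z^{\beta}f^{\alpha}(z)\left(\beta\int_0^z f^{\alpha-1}(t)f'(t)t^{\beta}\,dt\right)^{-1}-\dfrac{\alpha}{\beta}.
\end{equation*}
Since $f\in\mathcal{A}_n$, the integrand behaves like $t^{\beta+n-1}$ near the origin while $z^{\beta}f^{\alpha}(z)$ behaves like $z^{\beta+\alpha}$, so $p$ is well defined and analytic in $\mathbb{D}$ with $p(0)=1$; examining the power series shows moreover that $p\in\mathcal{H}[1,n]$.

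Next I would set $Q(z)=zf'(z)/f(z)$, which also lies in $\mathcal{H}[1,n]$, so that the hypothesis of the theorem is precisely the subordination~\eqref{18} for this choice of $Q$. Rewriting the definition of $p$ as $\beta p(z)+\alpha=z^{\beta}f^{\alpha}(z)\big/\!\int_0^z f^{\alpha-1}(t)f'(t)t^{\beta}\,dt$ and differentiating logarithmically gives, after a short computation, $zp'(z)/(\beta p(z)+\alpha)=1-p(z)Q(z)$; that is, $p$ satisfies the differential equation~\eqref{19}. The restrictions on $\alpha,\beta$ assumed here are exactly those in Lemma~\ref{lema2}, so that lemma applies and yields $p(z)\prec(1-z)/(1+z)$. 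In particular $\RE p(z)>0$, hence $p(z)\neq 0$ and $\RE(\beta p(z)+\alpha)>0$.

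It remains to relate $p$ to $F$. Because $p$ does not vanish, the function $F(z)=z\big((\alpha+\beta)/(\beta p(z)+\alpha)\big)^{1/\beta}$ is analytic and belongs to $\mathcal{A}_n$; substituting $\beta p(z)+\alpha=z^{\beta}f^{\alpha}(z)\big/\!\int_0^z f^{\alpha-1}(t)f'(t)t^{\beta}\,dt$ shows that it coincides with the operator $A_{\alpha,\beta}[f]$ of~\eqref{15}. Differentiating the displayed expression for $F$ logarithmically and using~\eqref{19} gives $zF'(z)/F(z)=p(z)Q(z)$, so
\begin{equation*}
	\dfrac{zF'(z)/F(z)}{zf'(z)/f(z)}=p(z)\prec\dfrac{1-z}{1+z},
\end{equation*}
and since $(1-z)/(1+z)$ maps $\mathbb{D}$ onto the right half-plane this is exactly the asserted inequality.

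I do not expect a genuine obstacle: all of the analytic substance --- the close-to-convexity of the function $h$ in~\eqref{18} and the boundary modulus estimate $|Q(z_0)|\ge r(\theta)$ --- has already been carried out in Lemma~\ref{lema2}, and what is left is bookkeeping: verifying that the constructed $p$ really solves~\eqref{19}, and checking that the $F$ built from $p$ reproduces the integral operator~\eqref{15}. The only points needing a little care are the branch choices implicit in the fractional powers $f^{\alpha}$ and $f^{\alpha-1}$, and recording $\RE p(z)>0$ before dividing by $p$.
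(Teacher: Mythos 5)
Your proof is correct and is exactly the argument the paper intends: the paper omits the proof of this theorem, stating only that it follows from Lemma~\ref{lema2} in the same way Theorem~\ref{thm1} follows from Lemma~\ref{lema1}, which is precisely what you carry out (define $p$ via the integral, check $p\in\mathcal{H}[1,n]$, verify the differential equation~\eqref{19} with $Q=zf'/f$, apply the lemma, and identify $zF'(z)/F(z)=p(z)Q(z)$). One minor slip in your heuristic check: near the origin the integrand $f^{\alpha-1}(t)f'(t)t^{\beta}$ behaves like $t^{\alpha+\beta-1}$, not $t^{\beta+n-1}$, which is what actually forces $p(0)=1$; the rest of the bookkeeping is fine.
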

The proof of Theorem~\ref{noproof} follows by an application of Lemma~\ref{lema2}, similar to that of Theorem~\ref{thm1} and therefore it is omitted here.
\begin{theorem}\label{thm4}
	Let n be a positive integer and $\alpha,\beta$ be non-negative real numbers with $\beta\neq 0$. Suppose that either $\alpha<\beta<3\alpha$ or $\beta<\alpha<3\beta$ and $f\in\mathcal{A}$ satisfies
	\begin{equation*}
		\Theta(f)\prec \dfrac{1+z}{1-z}+\dfrac{2nz}{(1-z)((\alpha+\beta)+(\alpha-\beta)z)},
	\end{equation*}
	where
	\begin{equation*} \Theta(f)=\dfrac{f(z)}{zf'(z)}-\left(\dfrac{zf''(z)}{f'(z)}+1-\dfrac{zf'(z)}{f(z)}\right)\left(\beta\dfrac{zf'(z)}{f(z)}+\alpha\right)^{-1},
	\end{equation*}
	then $zf'(z)/f(z)\prec (1-z)/(1+z).$
\end{theorem}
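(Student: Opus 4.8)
The plan is to imitate the proof of Theorem~\ref{thm2}, replacing the appeal to Lemma~\ref{lema1} by the corresponding one to Lemma~\ref{lema2}. First I would set $p(z)=zf'(z)/f(z)$, so that $p$ is analytic near the origin with $p(0)=1$, and take $Q(z)=\Theta(f)$. A logarithmic differentiation of $p$ gives
$$\frac{zp'(z)}{p(z)}=1+\frac{zf''(z)}{f'(z)}-\frac{zf'(z)}{f(z)},$$
and since $f(z)/(zf'(z))=1/p(z)$ and $\beta zf'(z)/f(z)+\alpha=\beta p(z)+\alpha$, the definition of $\Theta(f)$ rewrites as
$$Q(z)=\Theta(f)=\frac{1}{p(z)}-\frac{zp'(z)}{p(z)(\beta p(z)+\alpha)}.$$
Multiplying through by $p(z)$ and rearranging, this says precisely that $p$ and $Q$ satisfy the differential equation~\eqref{19}, namely $p(z)Q(z)+zp'(z)/(\beta p(z)+\alpha)=1$.

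Next I would verify that the remaining hypotheses of Lemma~\ref{lema2} hold. The restriction on $\alpha,\beta$ (either $\alpha<\beta<3\alpha$ or $\beta<\alpha<3\beta$) is assumed in the theorem, and the subordination demanded of $Q$ in~\eqref{18},
$$Q(z)\prec\frac{1+z}{1-z}+\frac{2nz}{(1-z)((\alpha+\beta)+(\alpha-\beta)z)},$$
is exactly the hypothesis $\Theta(f)\prec(\cdots)$ of the theorem. Hence Lemma~\ref{lema2} applies and yields $p(z)\prec(1-z)/(1+z)$, that is, $zf'(z)/f(z)\prec(1-z)/(1+z)$, which is the desired conclusion.

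The only genuinely computational point is the identity expressing $\Theta(f)$ in terms of $p$ and $zp'$, which is a short manipulation; that $p$ is well defined with $p(0)=1$ is immediate from the series expansion of $f$ at the origin. I do not expect any real obstacle here: all of the analytic difficulty — the close-to-convexity and univalence of the subordinant $h$ in~\eqref{18}, together with the boundary modulus estimate $|Q(z_0)|\geq r(\theta)$ — has already been dispatched in the proof of Lemma~\ref{lema2}, so the argument is essentially word-for-word that of Theorem~\ref{thm2} with the obvious substitutions.
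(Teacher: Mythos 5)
Your proposal is correct and follows exactly the route the paper intends: the paper omits the proof of Theorem~\ref{thm4}, noting only that it is analogous to Theorem~\ref{thm2} via Lemma~\ref{lema2}, and your choice $p(z)=zf'(z)/f(z)$, $Q(z)=\Theta(f)$ together with the identity $zp'/p=1+zf''/f'-zf'/f$ reduces the statement to precisely the differential equation~\eqref{19} and the subordination hypothesis~\eqref{18} of that lemma. Nothing further is needed.
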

The proof is omitted here as it is much akin to Theorem~\ref{thm2} and can be easily done by using Lemma~\ref{lema2}.
\begin{remark}
	If we take $n=1,$ $\beta=1$ and $\alpha=0$ in Theorem~\ref{thm4}, it reduces to a result of Tuneski~\cite[Theorem 2.1]{tun2}.
\end{remark}
\begin{theorem}
	Let n be a positive integer and $\alpha,\beta$ be non-negative real numbers with $\beta\neq 0$. Suppose that either $\alpha<\beta<3\alpha$ or $\beta<\alpha<3\beta$ and $f\in\mathcal{A}$ satisfies
	\begin{equation*}
		\dfrac{f(z)}{z}+\left(\dfrac{zf'(z)}{f(z)}-1\right)\left(\dfrac{\beta z}{f(z)}+\alpha\right)^{-1}\prec \dfrac{1+z}{1-z}+\dfrac{2nz}{(1-z)((\alpha+\beta)+(\alpha-\beta)z)},
	\end{equation*}
	then $f(z)/z\prec (1+z)/(1-z).$
\end{theorem}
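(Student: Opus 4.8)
The plan is to run the argument of Theorem~\ref{thm3} verbatim, only with Lemma~\ref{lema2} in place of Lemma~\ref{lema1}. First I would set
\[
p(z)=\frac{z}{f(z)},\qquad Q(z)=\frac{f(z)}{z}+\left(\frac{zf'(z)}{f(z)}-1\right)\left(\frac{\beta z}{f(z)}+\alpha\right)^{-1},
\]
so that $Q$ is precisely the left-hand side of the subordination assumed in the statement. Exactly as in the proofs of Theorems~\ref{thm2} and~\ref{thm3}, the hypotheses are arranged so that $f(z)/z\neq 0$ on $\mathbb{D}$, and hence $p$ is analytic with $p(0)=1$ and $p\in\mathcal{H}[1,n]$.

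Next I would check the single identity that drives everything, namely
\[
p(z)Q(z)+\frac{zp'(z)}{\beta p(z)+\alpha}=1 .
\]
This is immediate: differentiating $p=z/f(z)$ gives $zp'(z)=p(z)\bigl(1-zf'(z)/f(z)\bigr)$, and since $\beta p(z)+\alpha=\beta z/f(z)+\alpha$ we obtain
\[
\frac{zp'(z)}{\beta p(z)+\alpha}=-\,\frac{z}{f(z)}\left(\frac{zf'(z)}{f(z)}-1\right)\left(\frac{\beta z}{f(z)}+\alpha\right)^{-1},
\]
whereas $p(z)Q(z)=1+\frac{z}{f(z)}\left(\frac{zf'(z)}{f(z)}-1\right)\left(\frac{\beta z}{f(z)}+\alpha\right)^{-1}$; adding the two cancels the fractional parts and leaves $1$. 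With this identity in hand, the subordination in the statement reads exactly $Q(z)\prec \frac{1+z}{1-z}+\frac{2nz}{(1-z)((\alpha+\beta)+(\alpha-\beta)z)}$, which, together with the hypothesis $\alpha<\beta<3\alpha$ or $\beta<\alpha<3\beta$, is precisely the hypothesis of Lemma~\ref{lema2}.

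Applying Lemma~\ref{lema2} then gives $p(z)\prec (1-z)/(1+z)$. Finally, since $(1-z)/(1+z)$ maps $\mathbb{D}$ conformally onto the half-plane $\{\RE w>0\}$, the map $w\mapsto 1/w$ is an automorphism of that half-plane, and $1/\bigl((1-z)/(1+z)\bigr)=(1+z)/(1-z)$, the subordination $p\prec (1-z)/(1+z)$ passes to $1/p\prec (1+z)/(1-z)$; that is, $f(z)/z\prec (1+z)/(1-z)$, as claimed. I expect no genuine obstacle here: the entire content is the one-line algebraic identity above plus the routine verification that $p\in\mathcal{H}[1,n]$, after which Lemma~\ref{lema2} does all the work; the only point deserving a word of care is that inversion preserves subordination to $(1-z)/(1+z)$, which is what makes the desired conclusion equivalent to the half-plane bound produced by the lemma.
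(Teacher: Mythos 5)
Your proposal is correct and is essentially the paper's own proof: the authors likewise take $p(z)=z/f(z)$ with $Q$ equal to the left-hand side, verify the identity $p(z)Q(z)+zp'(z)/(\beta p(z)+\alpha)=1$, and invoke Lemma~\ref{lema2} to get $p\prec(1-z)/(1+z)$, whence $f(z)/z=1/p(z)\prec(1+z)/(1-z)$. Your explicit check of the algebraic identity and of the half-plane inversion step only fills in details the paper leaves implicit.
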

\begin{proof}
	Taking $p(z)=z/f(z)$ and $$Q(z)=\dfrac{f(z)}{z}+\left(\dfrac{zf'(z)}{f(z)}-1\right)\left(\dfrac{\beta z}{f(z)}+\alpha\right)^{-1},$$
	the result follows by an application of Lemma~\ref{lema2}.
\end{proof}
By taking $\beta=1, \alpha=0$ and $n=1,$ we obtain the following result
\begin{corollary}
	Let $f\in\mathcal{A}$ be such that
	$$f'(z)\prec \dfrac{1+z}{1-z}+\dfrac{2z}{(1-z)^2},$$
	then
	$$\dfrac{f(z)}{z}\prec \dfrac{1+z}{1-z}.$$
\end{corollary}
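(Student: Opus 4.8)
The plan is to read this corollary as the instance $\beta=1$, $\alpha=0$, $n=1$ of the theorem immediately preceding it, so the first task is to check that these substitutions collapse the general statement to the one asserted here. With $\beta=1$ and $\alpha=0$ one has $\bigl(\tfrac{\beta z}{f(z)}+\alpha\bigr)^{-1}=\tfrac{f(z)}{z}$, so the subordinand on the left telescopes,
\[
\frac{f(z)}{z}+\left(\frac{zf'(z)}{f(z)}-1\right)\frac{f(z)}{z}=\frac{f(z)}{z}\cdot\frac{zf'(z)}{f(z)}=f'(z),
\]
while the dominant becomes $\dfrac{1+z}{1-z}+\dfrac{2z}{(1-z)^{2}}$ and the conclusion $f(z)/z\prec(1+z)/(1-z)$ is unchanged. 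Hence, modulo the parameter hypothesis, the corollary is literally that special case, and I would simply invoke the preceding theorem, that is, apply Lemma~\ref{lema2} with $p(z)=z/f(z)$ and $Q(z)=f'(z)$ (which solves $p(z)Q(z)+\tfrac{zp'(z)}{p(z)}=1$ by the telescoping identity above).

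The one point that needs care is that the preceding theorem, through Lemma~\ref{lema2}, assumes $\alpha<\beta<3\alpha$ or $\beta<\alpha<3\beta$, and neither holds when $\alpha=0$. That hypothesis, however, enters the proof of Lemma~\ref{lema2} only to certify, via an $\arctan$ estimate, that the dominant
\[
h(z)=\frac{1+z}{1-z}+\frac{2nz}{(1-z)\bigl((\alpha+\beta)+(\alpha-\beta)z\bigr)}
\]
is close to convex, hence univalent. At $\alpha=0$, $\beta=1$ the factor $(\alpha+\beta)+(\alpha-\beta)z$ is just $1-z$, so the ratio governing that estimate is $\dfrac{(\alpha+\beta)+(\alpha-\beta)z^{2}}{\bigl((\alpha+\beta)+(\alpha-\beta)z\bigr)^{2}}=\dfrac{1-z^{2}}{(1-z)^{2}}=\dfrac{1+z}{1-z}$, which has positive real part on $\mathbb{D}$. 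Therefore $\RE\bigl(zh'(z)/k(z)\bigr)=2+2n\RE\tfrac{1+z}{1-z}>0$, with $k$ the Koebe function, so $h$ is still close to convex and univalent. The remainder of the proof of Lemma~\ref{lema2} is then unaffected: one has $p\in\mathcal{H}[1,1]$, and a non-subordination $p\nprec(1-z)/(1+z)$ forces, via the Miller--Mocanu boundary-point lemma, a value $Q(z_{0})$ with $|Q(z_{0})|\ge r(\theta)=|h(\zeta_{0})|$ lying outside $h(\mathbb{D})$, a contradiction; hence $z/f(z)\prec(1-z)/(1+z)$, i.e.\ $f(z)/z\prec(1+z)/(1-z)$.

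I do not expect a genuinely hard step: the only delicate moment is exactly the degenerate parameter range, resolved by the elementary identity above. An alternative that avoids patching Lemma~\ref{lema2} is to argue directly at $\alpha=0$. Writing $g(z)=z/f(z)$, one has $f'(z)=\tfrac1{g(z)}-\tfrac{zg'(z)}{g(z)^{2}}$, and the boundary curve of $h(\mathbb{D})$ is the leftward-opening parabola $u=-\tfrac{n}{2}(1+v^{2})$, with $h(\mathbb{D})=\{u+iv:u>-\tfrac{n}{2}(1+v^{2})\}$. If $g\nprec(1-z)/(1+z)$, the Miller--Mocanu lemma yields $z_{0}\in\mathbb{D}$, $\zeta_{0}\in\partial\mathbb{D}$ and $m\ge1$ with $f'(z_{0})=\tfrac{1+\zeta_{0}}{1-\zeta_{0}}+\tfrac{2m\zeta_{0}}{(1-\zeta_{0})^{2}}$; since $\tfrac{2\zeta_{0}}{(1-\zeta_{0})^{2}}$ is a negative real number and $\tfrac{1+\zeta_{0}}{1-\zeta_{0}}$ is purely imaginary, $f'(z_{0})$ has the same imaginary part as $h(\zeta_{0})$ but real part at most $\RE h(\zeta_{0})$, hence $f'(z_{0})\notin h(\mathbb{D})$, contradicting $f'\prec h$.
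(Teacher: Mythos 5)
Your proposal is correct and follows the paper's intended route — the paper's entire ``proof'' is the single sentence that the corollary is the case $\beta=1$, $\alpha=0$, $n=1$ of the preceding theorem — but you have caught something the paper misses: that specialization is not actually licensed, since at $\alpha=0$, $\beta=1$ neither $\alpha<\beta<3\alpha$ nor $\beta<\alpha<3\beta$ holds, so the hypotheses of the preceding theorem and of Lemma~\ref{lema2} are violated. Your telescoping computation showing the subordinand collapses to $f'(z)$ and the dominant to $\tfrac{1+z}{1-z}+\tfrac{2z}{(1-z)^2}$ is exactly right, and your repair is the correct one: the parameter restriction is used in Lemma~\ref{lema2} only to force $\RE\bigl(zh'(z)/k(z)\bigr)>0$, and at $\alpha=0$, $\beta=1$ the governing ratio degenerates to $\tfrac{1-z^{2}}{(1-z)^{2}}=\tfrac{1+z}{1-z}$, whose real part is positive, so close-to-convexity of $h$ survives and the rest of the lemma goes through verbatim. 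Your alternative direct argument is in fact tighter than the paper's own mechanism: identifying $h(\partial\mathbb{D})$ as the parabola $u=-\tfrac{n}{2}(1+v^{2})$ and comparing \emph{real parts} at equal imaginary part (using $\tfrac{2\zeta_0}{(1-\zeta_0)^2}=-\tfrac{1}{2\sin^2(\theta/2)}$ and $m\geq n$) cleanly places $Q(z_0)$ outside $h(\mathbb{D})$, whereas the paper's step from $|Q(z_0)|\geq r(\theta)$ to $Q(z_0)\notin h(\mathbb{D})$ requires an additional geometric justification it does not supply. The only loose end, shared with the paper, is that $p(z)=z/f(z)$ must be analytic, i.e.\ $f(z)/z\neq 0$ in $\mathbb{D}$, which is tacitly assumed rather than derived from the hypothesis.
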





\end{document}